\documentclass{amsart}
\usepackage[utf8]{inputenc}

\usepackage[T1]{fontenc}
\usepackage[english]{babel}
\usepackage{lmodern}
\usepackage{amsfonts}
\usepackage{amsmath}
\usepackage{graphicx}
\usepackage{changepage}
\usepackage{amsthm}
\usepackage{tikz}
\usepackage{enumitem}
\usepackage{amssymb}


\usepackage{upref}
\usepackage[backref,bookmarks=false]{hyperref}
\usepackage{mathrsfs}
\usepackage{url} 
\usepackage{imakeidx}
\usepackage{bbm}

\usepackage{setspace}

\usepackage{color}


\newtheorem{theorem}{Theorem}[section]
\newtheorem{defi}[theorem]{Definition}
\newtheorem{notation}[theorem]{Notation}
\newtheorem{prop}[theorem]{Proposition}
\newtheorem{lemma}[theorem]{Lemma}
\newtheorem{cor}[theorem]{Corollary}

\newtheorem{rem}[theorem]{Remark}
\newtheorem{fact}[theorem]{Fact}
\newtheorem*{theoremA}{Theorem A}
\newtheorem*{theoremB}{Theorem B}
\newcommand{\N}{\mathbb{N}}
\newcommand{\diam}{{\rm{diam}}}
\newcommand{\LL}{{\mathcal L}}
\newcommand{\re}{{\rm{Re}}}
\newcommand{\norm}[1]{\left\lVert#1\right\rVert}

\newcommand{\BV}{\mathrm{BV}}
\newcommand{\ben}{\begin{enumerate}}
\newcommand{\een}{\end{enumerate}}
\newcommand{\bitm}{\begin{itemize}}
\newcommand{\eitm}{\end{itemize}}

\newcommand{\bea}{\begin{eqnarray}}
\newcommand{\ba}{\begin{array}}
\newcommand{\bean}{\begin{eqnarray*}}
\newcommand{\ea}{\end{array}}
\newcommand{\eea}{\end{eqnarray}}
\newcommand{\eean}{\end{eqnarray*}}
\newcommand{\beq}{\begin{equation}}
\newcommand{\eeq}{\end{equation}}
\newcommand{\bthm}{\begin{thm}}
\newcommand{\ethm}{\end{thm}}
\newcommand{\blem}{\begin{lem}}
\newcommand{\elem}{\end{lem}}
\newcommand{\bprop}{\begin{prop}}
\newcommand{\eprop}{\end{prop}}
\newcommand{\bcor}{\begin{cor}}
\newcommand{\ecor}{\end{cor}}
\newcommand{\bdfn}{\begin{dfn}}
\newcommand{\edfn}{\end{dfn}}
\newcommand{\brem}{\begin{rem}}
\newcommand{\erem}{\end{rem}}
\newcommand{\bpf}{\begin{proof}}
\newcommand{\epf}{\end{proof}}
\newcommand{\bfact}{\begin{fact}}
\newcommand{\efact}{\end{fact}}
\newcommand{\bobs}{\begin{observation}}
\newcommand{\eobs}{\end{observation}}
\newcommand{\bobsdef}{\begin{observationdefinition}}
\newcommand{\eobsdef}{\end{observationdefinition}}
\newcommand{\bhyp}{\begin{hypothesis}}
\newcommand{\ehyp}{\end{hypothesis}}
\newcommand{\bexa}{\begin{example}}
\newcommand{\eexa}{\end{example}}
\newcommand{\bnot}{\begin{notation}}
\newcommand{\enot}{\end{notation}}

\alph{enumii} \roman{enumiii}

\newcommand\bal{\begin{aligned}}
\newcommand\eal{\end{aligned}}

\begin{document}                                                                                               

\title[Asymptotics of the Hausdorff measure]{Asymptotics of the Hausdorff measure for the Gauss map and its linearized analogue}
\author{Rafał Tryniecki}
 \address{Rafał Tryniecki:University of Warsaw,Institute of Mathematics, ul. Banacha 2, 02-097 Warszawa, Poland}
 \email{rafal.tryniecki@mimuw.edu.pl}
\author{Mariusz Urba\'nski}
\address{Mariusz Urbański:Department of Mathematics
University of North Texas
Denton, TX 76203-1430
USA}
\email{urbanski@unt.edu}
\author{Anna Zdunik}
 \address{Anna Zdunik:University of Warsaw, Institute of Mathematics, ul. Banacha 2, 02-097 Warszawa, Poland}
\email{A.Zdunik@mimuw.edu.pl}

\begin{abstract}
Let $G(x):=\{1/x\}$ be the Gauss map. By $g_n(x)=\frac{1}{x+n}$ we denote its continuous/real analytic inverse branches. We define iterated function system (IFS) $G_n$ by limiting the collection of functions $g_k$, $k\in\N$, to the first $n$ elements, meaning that $G_n = \{g_k \}_{k=1}^n$. We are interested in the asymptotics of the Hausdorff measure of the limit set $J_n$ i. e. set consisting of irrational elements of $[0,1]$ having continued fraction expansion with entries at most $n$. In the first part of the paper, we deal with the piecewise-linear analogue of the Gauss map and resulting IFSs. We prove that 
\[
\lim \limits_{n \to \infty } \frac{1-H_n(J_n)}{1-h_n} \cdot \frac{1}{\ln n} = 1,
\]
where $J_n$ is the limit set of the piecewise-linear analogue of $G_n$, $h_n$ is its Hausdorff dimension and $H_n$ is the value of   $h_n$-dimensional Hausdorff measure of the set $J_n$, $H_n:=H_{h_n}(J_n)$.

In the second part, we focus on the IFS generated by the first $n$ branches of Gauss map and prove, as our main result, that
$$
\lim_{n\to\infty} \frac{1-H_n}{(1-h_n)\ln n}= 1
$$
and equivalently, due to Hensley's result,
$$
\lim_{n\to\infty} \frac{n(1-H_n)}{\ln n}= \frac{6}{\pi^2},
$$
where $J_n$ is the limit set of the system $G_n$, i.e. the set consisting of irrational numbers in $[0,1]$ that continued fraction expansion with entries not exceeding $n$. Similarly as for the piecewise linear map, $h_n$ is the Hausdorff dimension of $J_n$ and $H_n$ is the value of   $h_n$-dimensional Hausdorff measure of the set $J_n$, $H_n:=H_{h_n}(J_n)$.
\end{abstract}

\maketitle

\tableofcontents 
\thanks {This research was funded in whole or  in part by the National Science Centre, Poland, grant no. 2023/49/B/ST1/03015 (A.Zdunik and R.Tryniecki).
The research of the second named author was supported in part by the Simons Grant MPS-TSM-00007114.}


\section{Introduction}

Let 
$$
G(x):=\left\{\frac 1 x \right\}
$$
be the well-known Gauss map. Recall that the map $G$ is closely related to the continued fraction expansion of a point $x\in[0,1]\setminus\mathbb Q$. Namely, if 
$$
x=[x_1,x_2,\dots]
$$ 
is the continued fraction expansion of a point $x$, then the expansion of $G(x)$ is given by applying the left shift to the fractional expansion for $x$,  i.e.  
$$
G(x)=[x_2, x_3,\dots].
$$
The map $G$ is piecewise monotone decreasing and maps every maximal interval of monotonicity $\left(\frac{1}{n+1},\frac 1 n\right]$, $n\in\N$, onto $[0,1)$. The continuous/real analytic inverse branches $$
g_n:[0,1]\longrightarrow [0,1], \ \ n\in\N,
$$
of the Gauss map $G$ are given by the formulas 
$$
g_n([0,1])=\frac{1}{x+n}.
$$
and 
$$
g_n(x)=\left[\frac{1}{n+1},\frac 1 n\right].
$$
The collection of maps $$(g_n)_{n\in\mathbb N}$$ forms an infinite Iterated Function System satisfying the Open Set Condition.

It is natural to consider the subsystems $G_n$ consisting of $n$ initial maps $g_1,\dots g_n$. The limit set $J_n$ of the system $G_n$ is a Cantor set consisting of irrational numbers in $[0,1]$ having continued fraction expansion with entries bounded above by $n$.
Thus, the union $$\mathcal B=\bigcup_{n\in\mathbb N} J_n$$ is exactly the set of badly approximable numbers, i.e., 
$$
\mathcal B=\left\{x\in [0,1]\setminus \mathbb Q: \exists\, C\in(0,\infty)\colon \left|x-\frac p q\right|>\frac {C}{q^2} \quad \text{for all} \quad \frac p q\in\mathbb Q\right \}
$$
Denote by $h_n$ the Hausdorff dimension of the limit set $J_n$. A famous result of Doug Hensley, both in dynamics and number theory, is  (see \cite{hensley}) that
\begin{equation}\label{eq:asymp_dim for nonlinear Gauss}
\lim_{n\to\infty}(1-h_n) n=\frac{6}{\pi^2}.
\end{equation}
See also \cite{dfsu} for further estimates of the asymptotics of $h_n$ and \cite{ct} for discussion on the dimension of (more general) sets $\mathcal B(t)=\{x\in [0,1]\colon G^k(x)\ge t \quad\text{for all}\quad k\in\mathbb N\}$.

In the paper \cite{uz} a one level deeper question was asked. It concerned the behavior of the numerical values of $h_n$-dimensional Hausdorff measures $H_{h_n}(J_n)$ of the sets $J_n$ as $n\to\infty$. The system $G_n = \{g_j \}_{j=1}^n$ is a finite iterated function system of conformal maps, satisfying Open Set Condition. It follows from general theory of conformal iterated function systems that $0<H_{h_n}(J_n)<\infty$ (see, e.g. \cite{MU}, Theorem 4.2.11). The main result in \cite{uz} was that the function 
\[
\N\ni n\longmapsto H_{h_n}(J_n)
\]
has a limit as $n$ tends to infinity and, furthermore, this limit is equal to $1$. In formulas:
\beq
\label{320260226}
\lim_{n\to\infty} H_{h_n}(J_n)=1=H_1([0,1]).
\eeq
Thus, the function $\N\ni n\mapsto H_{h_n}(J_n)$ is continuous at infinity.

In the present paper we address and fully answer a further, natural, much subtler, and much more involved question about asymptotics of the function 
$$
\N\ni n\longmapsto H_{h_n}(J_n).
$$
In other words, we ask about the rate in which the values $H_{h_n}(J_n)$ converge to $H_1([0,1])=1$ when $n$ tends to infinity. Abbreviating $H_{h_n}(J_n)$ by $H_{n}$, we prove, as our main result stated in Theorem~\ref{t120260312}, that
$$
\lim_{n\to\infty}\frac{1-H_n}{(1-h_n)\ln n}=1
$$
and
$$
\lim_{n\to\infty} \frac{n(1-H_n)}{\ln n}= \frac{6}{\pi^2}.
$$
This result has a clear number theoretical and dynamical flavor. Our methods of proofs are dynamical (the theory of conformal iterated function systems and beyond), functional analytic (spectral theory and perturbations of linear operators), number theoretical (\cite{hensley}, \cite{uz}, and beyond), and we prove many hard auxiliary estimates. 

As a prelude to our treatment of the Gauss map, we deal in Part~1 with its piecewise linear version. This is interesting on its own and we substantially use some of the results obtained for this piecewise linear case in our proofs about the Gauss map itself.

We naturally replace the actual non-linear branches $g_n$ by linear ones, defining now
$$
g_n(x):=-\frac{1}{n(n+1)}x+\frac 1 n, \  \  n\in \N.
$$
Then, as in the Gauss map  
$$
g_n([0,1])= \left [\frac{1}{n+1}, \frac{1}{n} \right ] ,
$$
but these maps are now piecewise linear. We call this system the linear analogue of the Gauss map/system. We use the same symbols $g_n$, $J_n$, $h_n$, $H_n$, $H_{h_n}(J_n)$, and more, for both the non-linear Gauss map and its linear analogue. Since will never consider them simultaneously in Part~1, Part~2, or Part~3, this will not lead to confusion and misunderstandings. 

The subsystems $G_n$ consisting of $n$ initial maps $g_1,\dots g_n$ have the same meaning as for the initial non-linear version. 
Denoting by $J_n$ their limit sets and by $h_n$ the Hausdorff dimension of $J_n$, we have a similar asymptotics as in 
\cite{hensley}.
\begin{equation}\label{eq:asymp_dim for linear Gauss}
\lim_{n\to\infty}(1-h_n)\cdot n=\frac{1}{\chi}.
\end{equation}
where $\chi$ is the Lyapunov exponent of the system $(g_n)_{n\in\mathbb N}$ with respect to the Lebesgue measure (which is invariant), i.e.
$$\chi=\sum_{n=1}^\infty \frac{\log(n(n+1))}{n(n+1)}.$$
This result  can be found in \cite{dfsu}.
The Hausdorff measure $H_{h_n}(J_n)$ is again positive and finite for all $n\ge 2$ since the system $\{g_j\colon j\in\{1,\dots n\}\}$ is a systems of similarities satisfying the Open Set Condition.

As in \cite{uz}, we have also the following continuity result with a very similar proof.

\begin{equation}\label{eq:continuity_hausdorff}
\lim_{n\to\infty} H_{h_n}(J_n)=1=H_1([0,1]),
\end{equation}
See also \cite{tryniecki} for much more  general continuity results for  sequences of finite iterated function systems converging to an infinite one.

Our main result in the case of the linear analogue of the Gauss map is equally full (we prove it first) as for the non-linear Gauss map. It states, again abbreviating $H_{h_n}(J_n)$ by $H_n$, see Theorem~\ref{thm: wszystko razem}, that 
\[
\lim \limits_{n \to \infty } \frac{1-H_n}{1-h_n} \cdot \frac{1}{\ln n} = 1
\]
and 
\[
\lim \limits_{n \to \infty} \frac{n\cdot(1-H_n)}{\ln n}  = \frac{1}{\chi},
\]
where $\chi$ is the Lyapunov exponent of the system $G$ with respect to the Lebesgue measure.

\section{Preliminaries}

\subsection{The Gauss map and its piecewise linear version. Notation. }

Since the Gauss map and its linear analogue will be dealt with in separate sections, we, in line with Introduction, will use notation introduced below for both of these systems. As we have already said in Introduction, this will not lead confusion or misunderstanding. So, for both linear and non-linear case we introduce the following notation.

For every $k\in \mathbb N$ the function
$$
f_k:\left[\frac{1}{k+1},\frac{1}{k}\right]\longrightarrow [0,1] 
$$  
is defined either by the formula 
\[
f_k(x):=\left\{\frac 1 x\right \}
\]
if we consider the non-linear Gauss, or by the formula
$$
f_k(x):=-k(k+1)x+k+1.
$$
In either case each function $f_k$, $k\in \mathbb N$, is decreasing,
\[
f_k\left (\frac{1}{k+1}\right ) = 1, \  \  f_k\left (\frac{1}{k}\right ) = 0,
\]
and 
$$
f_k\left(\left[\frac{1}{k+1},\frac{1}{k}\right]\right)=[0,1].
$$
In the former case $f_k$ is real-analytic while in the latter case it is even affine. 

For every $k\in \mathbb N$ we denote by $g_k$ the inverse map 
$$
f_k^{-1}:[0,1]\longrightarrow\left[\frac{1}{k+1},\frac{1}{k}\right].
$$
As it was actually already indicated in Introduction, in the linear case we have
\[
g_k(x)=-\frac{1}{k(k+1)}x+\frac 1 k,
\]
while for the original Gauss map
$$
g_k(x)=\frac{1}{x+k}.
$$
The collection of maps 
$$
G:= \{g_n: n\in\mathbb N\}
$$ 
forms a (linear or nonlinear) iterated function system. In either case we use the same notation $G$.

\begin{defi}\label{def:IFS Sn}
For every $n\in \mathbb N$, the iterated function system $G_n$ is defined by limiting the the system $G$ to its initial $n$ maps. More precisely,
$$
G_n := \{g_k\}_{k=1}^{n}. 
$$
\end{defi}

\begin{notation}
In the following sections, we use notation
$$
b_k:=\frac{1}{k}=g_k(0), \quad  and \quad a_k:=b_k-b_{k+1} 
$$
for all $k\in\mathbb N$.
\end{notation}
With this notation
$$
g_k(0)=b_k, \  \  g_k(1)=b_{k+1},
$$
and
$$
g_k([0,1])=[b_{k+1}, b_k].
$$

\begin{notation} We denote by $\mathbb N^*$ the set of finite sequences with integer entries.
For every finite sequence  
$$
\omega=(\omega_1, \omega_2, \omega_3,\dots ,\omega_m)\in \N^*,
$$ 
we denote  
$$
g_\omega:=g_{\omega_1}\circ g_{i_2}\circ \dots \circ g_{\omega_m}:[0,1]\longrightarrow [0,1].
$$
\end{notation}

\begin{defi}[and notation]\label{def: generacje fnl}
Given $n\in\N$ and $l\in\N$, we denote by $\mathcal{F}^{n}_l$ the $l$-th generation of intervals generated by the iterated function system $G_n$. More precisely:
\[
  \mathcal{F}^n_l := \big \{ g_\omega([0,1]):\omega  \in \mathbb N^l \big \}
\]
We call them cylinder sets of order $l$. Similarly, we denote by $\mathcal F_l$ the $l$-th generation of intervals generated by the  system $G$:
\[
  \mathcal{F}_l := \big \{ g_\omega([0,1]): \omega  \in \mathbb N^l \big \}
\]
\end{defi}
Recall from the introduction that for each $n\in\N$, $J_n$ denotes the limit set of the iterated function system $G_n$. One of its possible definition is the following.
\[
J_n = \bigcap\limits_{k=1}^\infty \bigcup_{1\le \omega_1,\dots \omega_k\le n}g_{\omega_1}\circ g_{\omega_2} \circ \dots \circ g_{\omega_k}([0,1]).
\]
Then,
$$
J_n\subseteq [b_{n+1}, b_1]=[b_{n+1},1].
$$
Recall from the introduction that by $h_n$ we denote the Hausdorff dimension of the set $J_n$. If $A$ is a subset of a metric space and $h\in [0,+\infty)$, then
by $H_h(A)$ we denote the Hausdorff measure of the set A in dimension $h$. 
By $H_n$ we denote the Hausdorff measure of the set $J_n$ evaluated  at its dimension $h_n$.

\subsection{Density theorems for general Hausdorff measures}
In this section we collect some well-known general density theorems. 
We start with
the following density theorem for Hausdorff measures (see \cite{mattila}, p.91).

\begin{fact}\label{fact:density} 
If $X$ is a metric space with $h:=\dim_H(X)$ being its Hausdorff dimension is such that $H_{h}(X)<+\infty$, then
\begin{equation}\label{eq:density_general}
\lim_{r\to 0}\left (\sup\left\{\frac{H_{h}(F )}{{\diam}^{h}(F)}: x\in F\subset X,\
\overline{F}=F,\ \diam(F)\le r\right\}\right )=1.
\end{equation}
for $H_{h}$--a.e. $x\in X$.
\end{fact}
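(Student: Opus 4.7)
The plan is to prove the two one-sided bounds $D_h(x) \leq 1$ and $D_h(x) \geq 1$ separately for $H_h$-a.e.\ $x$, where
\[
D_h(x) := \lim_{r \downarrow 0}\, \sup\left\{\frac{H_h(F \cap X)}{\diam(F)^h}: x \in F,\ \overline F = F,\ \diam(F) \leq r\right\}
\]
(the limit exists by monotonicity of the supremum in $r$). By taking countable unions over rational thresholds $\theta$, it suffices to show that $H_h(\{D_h > \theta\}) = 0$ for every $\theta > 1$ and $H_h(\{D_h < \theta\}) = 0$ for every $\theta < 1$.

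For the upper bound I would fix $\theta > 1$, set $A_\theta := \{x \in X : D_h(x) > \theta\}$, and, at each $x \in A_\theta$ and each $\rho > 0$, select from the definition a closed $F \ni x$ with $\diam(F) \leq \rho$ and $H_h(F \cap X) > \theta \diam(F)^h$. The collection of all such $F$'s forms a Vitali fine cover of $A_\theta$. Applying a Vitali-type covering theorem for the finite Borel measure $H_h$, extract a countable disjoint subfamily $(F_i)$ that covers $A_\theta$ up to an $H_h$-null set, with all $F_i$ contained in a preassigned open neighborhood $U$ of $A_\theta$. Then
\[
H_h^\rho(A_\theta) \leq \sum_i \diam(F_i)^h \leq \frac{1}{\theta} \sum_i H_h(F_i \cap X) \leq \frac{1}{\theta} H_h(U \cap X),
\]
and outer regularity of $H_h$ combined with $\rho \to 0$ yields $H_h(A_\theta) \leq H_h(A_\theta)/\theta$, forcing $H_h(A_\theta) = 0$ since $H_h(A_\theta) \leq H_h(X) < \infty$.

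For the lower bound I would fix $\theta < 1$, let $A^\theta := \{x : D_h(x) < \theta\}$, and decompose $A^\theta = \bigcup_m A^\theta_m$, where $A^\theta_m$ is the set of points for which the supremum in the definition is already strictly below $\theta$ at the scale $r = 1/m$. Then for each $x \in A^\theta_m$ and each closed $F \ni x$ with $\diam(F) \leq 1/m$ one has $H_h(F \cap X) \leq \theta \diam(F)^h$. Given any $\delta$-cover $(U_j)$ of $A^\theta_m$ with $\delta \leq 1/m$, I would pick $x_j \in A^\theta_m \cap U_j$ and apply the bound to $\overline{U_j}$, obtaining $H_h(A^\theta_m \cap U_j) \leq \theta \diam(U_j)^h$. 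Summing and taking infimum over covers gives $H_h(A^\theta_m) \leq \theta H_h^\delta(A^\theta_m) \leq \theta H_h(A^\theta_m)$, which forces $H_h(A^\theta_m) = 0$ and hence $H_h(A^\theta) = 0$.

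The main technical obstacle is the Vitali-type covering step in the upper-bound argument: the fine cover here consists of arbitrary closed sets rather than balls, so one needs a covering theorem valid for this broader class of testing sets together with the localization property (cover contained in a prescribed neighborhood). In the Euclidean setting of \cite{mattila} this is standard, and it is precisely this input that converts the pointwise density bound into a measure-theoretic conclusion. Combining the two parts, $D_h(x) = 1$ for $H_h$-a.e.\ $x \in X$, which is the content of \eqref{eq:density_general}.
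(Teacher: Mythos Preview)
The paper does not prove this fact; it is simply recorded with a citation to Mattila \cite{mattila}, p.~91. Your outline is exactly the standard two-sided density argument one finds there (and in Falconer's textbook): the lower bound by testing the density hypothesis against an arbitrary $\delta$-cover of $A^\theta_m$, and the upper bound via a Vitali-type disjoint selection from the fine cover by closed sets witnessing $D_h>\theta$. The covering step you flag is indeed the only substantive input; the version you need (a countable disjoint subfamily $(F_i)$ with either $\sum_i\diam(F_i)^h=\infty$ or $H_h(A_\theta\setminus\bigcup_i F_i)=0$) is obtained by the usual greedy selection plus $5$-times enlargement and goes through in any metric space, so no Euclidean hypothesis is actually needed. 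The first alternative is ruled out here because $\sum_i H_h(F_i\cap X)\le H_h(X)<\infty$ forces $\sum_i\diam(F_i)^h<\infty$, after which your chain of inequalities and outer regularity finish the job.
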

 As a corollary of this fact, we  get the following
fundamental theorem which was extensively explored in e.g. in \cite{olsen} and \cite{ath}.

\begin{theorem}\label{thm_density_formula}
Let $X$ be a metric space such that $0< H_{h}(X)<+\infty$, where
$h:=\dim_H(X)$. 
Denote by $H^1_h$ the normalized $h$-dimensional Hausdorff measure on $X$, i.e. 
$$
H^1_h:=H_{h}^{-1}(X)H_{h}.
$$ 
Then we have for $H_{h}$-a.e. $x\in X$ that
\begin{equation}\label{eq:limitformula}
H_{h}(X)=\lim_{r\to 0}\left (\inf\left\{\frac{{\diam}^{h}(F)}{H_{h}^{1}(F\cap X)}:
          x\in F,\quad \overline{F}=F,\quad \diam(F)\le r\right\}\right ).
\end{equation}
\end{theorem}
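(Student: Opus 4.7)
The plan is to reduce the theorem directly to Fact~\ref{fact:density} via a short algebraic manipulation, since the two quantities differ only by a reciprocal and a normalizing constant. Because $H_h^1 = H_h(X)^{-1} H_h$, for every closed set $F$ with $x\in F$ and $H_h(F\cap X)>0$ one has the pointwise identity
\[
\frac{\diam^{h}(F)}{H_h^1(F\cap X)} \;=\; H_h(X)\cdot\left(\frac{H_h(F\cap X)}{\diam^{h}(F)}\right)^{-1}.
\]
The quantity inside the outer parentheses on the right is precisely the one whose supremum appears in Fact~\ref{fact:density}. Since, for positive quantities, taking an infimum commutes with reciprocation as $\inf(1/a) = 1/\sup(a)$, passing to the infimum over admissible $F$ with $\diam(F)\le r$ should convert the infimum in \eqref{eq:limitformula} into $H_h(X)$ divided by the supremum in \eqref{eq:density_general}.

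Concretely, first I would argue that the sets $F$ with $H_h(F\cap X)=0$ can be discarded from the infimum: each such $F$ contributes $+\infty$ and hence does not affect the infimum as long as some admissible $F$ of positive measure exists near $x$. For the $H_h$-a.e.\ points $x$ provided by Fact~\ref{fact:density}, such sets exist for every small enough $r$, because the supremum there equals $1>0$. On this restricted family the reciprocal identity above holds, so
\[
\inf_F \frac{\diam^{h}(F)}{H_h^1(F\cap X)} \;=\; \frac{H_h(X)}{\sup_F\, \dfrac{H_h(F\cap X)}{\diam^{h}(F)}},
\]
where both extrema are taken over closed $F$ with $x\in F$ and $\diam(F)\le r$. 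Letting $r\to 0$ and invoking Fact~\ref{fact:density} gives that the denominator on the right tends to $1$, and therefore the whole expression tends to $H_h(X)$, as claimed.

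The main obstacle here is essentially cosmetic: careful handling of the convention $1/0=+\infty$ and of the $H_h$-almost-everywhere quantifier, so that both extrema are guaranteed to be attained over a consistent nonempty family of admissible sets. Once this bookkeeping is done, Theorem~\ref{thm_density_formula} is a one-line corollary of Fact~\ref{fact:density} via the reciprocal identity. No further measure-theoretic input beyond the cited density fact is needed.
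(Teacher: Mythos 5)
Your proposal is correct and is exactly the derivation the paper intends: the paper gives no written proof, simply declaring Theorem~\ref{thm_density_formula} a corollary of Fact~\ref{fact:density}, and your reciprocal identity $\diam^{h}(F)/H_h^1(F\cap X)=H_h(X)\cdot\bigl(H_h(F\cap X)/\diam^{h}(F)\bigr)^{-1}$ together with $\inf(1/a)=1/\sup(a)$ is the standard way to make that reduction explicit. Your handling of the degenerate sets with $H_h(F\cap X)=0$ is the right bookkeeping and matches what the paper leaves implicit.
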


Obviously, we also have the following.

\begin{cor}\label{cor:ratio}
If $X$ is a subset of a Euclidean metric space $\mathbb R^d$ such that $0< H_{h}(X)<+\infty$, where $h:=\dim_H(X)$, then we have for $H_{h}$--a.e. $x\in X$ that
\begin{equation}\label{eq:limitformula2}
H_{h}(X)=\lim_{r\to 0}\inf\left\{\frac{{\diam}^{h}(F)}{H_{h}^{1}(F \cap X)}\right\},
\end{equation}
where, for every $r>0$, the infimum is taken over all closed convex sets $F\subset\mathbb R^d$ containing $x$ with $\diam(F)\le r$.

Equivalently:
\begin{equation}\label{eq:limitformula2B}
\frac{1}{H_{h}(X)}
=\lim_{r\to 0}\sup\left\{\frac{H_{h}^{1}(F \cap X)}{{\diam}^{h}(F)}\right\}
\end{equation}
where, for every $r>0$, the supremum  is taken over all closed convex sets $F\subset\mathbb R^d$ containing $x$ with $\diam(F)\le r$.
\end{cor}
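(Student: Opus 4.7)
The plan is to deduce Corollary~\ref{cor:ratio} directly from Theorem~\ref{thm_density_formula} by showing that, in the Euclidean setting, restricting the infimum in~\eqref{eq:limitformula} from arbitrary closed sets to closed convex sets does not change its value. The key geometric input is the classical fact that in $\mathbb{R}^d$ the diameter of a bounded set equals the diameter of its closed convex hull:
\[
\diam\bigl(\overline{\mathrm{conv}}(F)\bigr) = \diam(F).
\]

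More precisely, I would first fix a point $x\in X$ for which the conclusion of Theorem~\ref{thm_density_formula} holds. Denote by $\mathcal{I}_r(x)$ the infimum in~\eqref{eq:limitformula} taken over arbitrary closed $F\ni x$ with $\diam(F)\le r$, and by $\mathcal{I}^{\mathrm{conv}}_r(x)$ the analogous infimum restricted to closed convex $F$. Since every closed convex set is in particular closed, one inequality
\[
\mathcal{I}_r(x)\le \mathcal{I}^{\mathrm{conv}}_r(x)
\]
is immediate. For the reverse inequality, given any closed $F\ni x$ with $\diam(F)\le r$, set $\widetilde{F}:=\overline{\mathrm{conv}}(F)$. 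Then $\widetilde{F}$ is closed convex, contains $x$, has $\diam(\widetilde{F})=\diam(F)\le r$, and $F\subseteq \widetilde{F}$, so that $H^1_h(F\cap X)\le H^1_h(\widetilde{F}\cap X)$. Hence
\[
\frac{\diam^h(\widetilde{F})}{H^1_h(\widetilde{F}\cap X)} \le \frac{\diam^h(F)}{H^1_h(F\cap X)},
\]
which yields $\mathcal{I}^{\mathrm{conv}}_r(x)\le \mathcal{I}_r(x)$. Taking $r\to 0$ and invoking Theorem~\ref{thm_density_formula} gives~\eqref{eq:limitformula2}.

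For the equivalent formulation~\eqref{eq:limitformula2B}, I would simply observe that the map $t\mapsto 1/t$ is a decreasing bijection on $(0,\infty)$, so the supremum of $H^1_h(F\cap X)/\diam^h(F)$ over a family equals the reciprocal of the infimum of $\diam^h(F)/H^1_h(F\cap X)$ over the same family (with the convention $1/0=+\infty$, which does not interfere here because $H^1_h(F\cap X)$ remains bounded above by $1$). Since we have just shown the two infima coincide, both suprema also coincide, yielding the second display.

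There is essentially no obstacle: the entire content lies in the single geometric observation that convex-hull closure preserves diameter while only enlarging the intersection with $X$, a property specific to normed (in particular Euclidean) spaces which justifies the restriction of $X$ to $\mathbb{R}^d$ in the corollary.
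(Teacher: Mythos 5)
Your proof is correct and follows exactly the route the paper intends: the paper derives Corollary~\ref{cor:ratio} from Theorem~\ref{thm_density_formula} with no further comment (``Obviously, we also have the following''), and your observation that passing to the closed convex hull preserves the diameter while only enlarging $F\cap X$ is precisely the detail that justifies restricting the infimum to closed convex sets. The reduction of \eqref{eq:limitformula2B} to \eqref{eq:limitformula2} via reciprocals is likewise the standard step and is handled correctly.
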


For subset of the real line we have even the following simpler formula.

\begin{cor}\label{cor:fundamental}
If $X$ is a subset of an interval $\Delta\subset\mathbb R$ such that $0< H_{h}(X)<+\infty$, where $h:=\dim_H(X)$, then we have for $H_{h}$--a.e. $x\in X$ that
\begin{equation}\label{eq:density_interval-1}
H_{h}(X)=\lim_{r\to 0}\left (\inf\left\{\frac{{\diam}^{h}(F)}{H_{h}^{1}(F\cap X)}\right\}\right ),
\end{equation}
where, for every $r>0$, the infimum is taken over all closed
intervals $F\subset\Delta$ containing $x$ with $0<\diam(F)\le r$.

Equivalently:
\begin{equation}\label{eq:density_interval-1B}
\frac{1}{H_{h}(X)}
=\lim_{r\to 0}\left (\sup\left\{\frac{H_{h}^{1}(F \cap X)}{{\diam}^{h}(F)}\right\}\right ),
\end{equation}
where, for every $r>0$, the supremum is taken over all closed
intervals $F\subset\Delta$ containing $x$ with $0<\diam(F)\le r$.
\end{cor}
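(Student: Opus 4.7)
The plan is to deduce this corollary directly from Corollary~\ref{cor:ratio} specialized to $d=1$. The closed convex subsets of $\mathbb R$ are exactly the closed intervals, so that corollary already yields the desired identities with the only difference that the class of test sets $F$ is the collection of \emph{all} closed intervals in $\mathbb R$ containing $x$ with $\diam(F)\le r$, whereas Corollary~\ref{cor:fundamental} restricts to those $F\subset\Delta$.

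The remaining step is to show that this restriction does not change the value of the infimum (equivalently, the supremum) as $r\to 0$. I would argue as follows. The topological boundary of the interval $\Delta$ consists of at most two points; since $0<H_h(X)<\infty$ forces $h>0$ (the case $h=0$ is trivial, as $X$ is then finite and a direct verification is immediate), these two points form an $H_h$-null subset of $X$ and may be discarded. For $H_h$-a.e. $x\in X$, the point $x$ thus lies in the interior of $\Delta$, and once $r$ is smaller than the distance from $x$ to $\mathbb R\setminus\Delta$, every closed interval $F\ni x$ with $\diam(F)\le r$ already satisfies $F\subset\Delta$. Hence the two families of test intervals coincide for all sufficiently small $r$, and the limits as $r\to 0$ agree.

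Finally, the $\sup$-version follows from the $\inf$-version by taking reciprocals, using $0<H_h(X)<\infty$ so that no zero-over-zero issues arise. I do not anticipate any substantive obstacle: the entire argument is a one-dimensional bookkeeping reduction from Corollary~\ref{cor:ratio}, and the only mildly delicate point is the observation that boundary points of $\Delta$ may be discarded as a null set in order to guarantee that local test intervals around $x$ eventually sit inside $\Delta$ on their own.
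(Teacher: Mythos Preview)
Your proposal is correct and follows the same approach the paper implicitly takes: the paper states Corollary~\ref{cor:fundamental} without proof, as an immediate specialization of Corollary~\ref{cor:ratio} to $d=1$ (closed convex subsets of $\mathbb R$ being closed intervals). Your additional care in handling the restriction $F\subset\Delta$ versus $F\subset\mathbb R$---by discarding the at most two boundary points of $\Delta$ as an $H_h$-null set and noting that sufficiently small intervals around interior points lie in $\Delta$---is a correct and welcome elaboration of a detail the paper leaves tacit.
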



\subsection{Hausdorff measures and density theorems for the linear Gauss system}

For iterated function systems on the interval $[0,1]$ consisting of similarities, i.e. affine maps, and satisfying the Strong Separation Condition, the above results can be restated in an even more convenient form, as it was observed and used first in \cite{olsen}.

\begin{prop}\label{prop:density_ifs}
Let $S$ be an iterated function system consisting of contracting  similarities satisfying the Strong Separation Condition.
Let $J$ be the limit set of this system, and $h=\dim_H(J)$. Then 
\[
 \sup\limits \left \{ \frac{H_h(F \cap J)}{{\rm{diam}}^{h}(F)}: F\subset [0,1] \,  \text{ is a closed interval} \right \}  = 1,  
\]
\[
H_h(J)=\inf\limits \left \{ \frac{{\diam}^{h}(F)}{H_{h}^{1}(F\cap J)}: F\subset [0,1] \,  \text{ is a closed interval} \right \},  
\]
and
\[
\frac{1}{H_{h}(J)}= \sup\limits \left \{\frac{H_{h}^{1}(F \cap J)}{{\diam}^{h}(F)}: F\subset [0,1] \,  \text{ is a closed interval} \right \}.  
\]
\end{prop}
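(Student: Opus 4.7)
The plan is to observe that the three displayed equalities are algebraically equivalent via $H_h^1=H_h(J)^{-1}H_h$, reducing everything to proving $\sup\{H_h(F\cap J)/\diam^h(F):F\subset[0,1]\text{ closed interval}\}=1$. The lower bound $\sup\geq 1$ will be immediate from Corollary \ref{cor:fundamental}: at $H_h$-a.e.\ $x\in J$, the supremum of $H_h(F\cap J)/\diam^h(F)$ over intervals $F\ni x$ with $\diam F\leq r$ tends to $1$ as $r\to 0$, and each such interval is a legitimate competitor for the global supremum.

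The upper bound $\sup\leq 1$ is the substantive direction, and the plan is to leverage scale invariance from the similarity structure together with the SSC. Denoting by $\phi_\omega$ the composition of similarities associated with a finite word $\omega$ and by $r_\omega$ its contraction ratio, the SSC gives $\phi_\omega(F)\cap J=\phi_\omega(F\cap J)$ for any interval $F$, hence $H_h(\phi_\omega(F)\cap J)=r_\omega^h\,H_h(F\cap J)$ and $\diam^h(\phi_\omega(F))=r_\omega^h\diam^h(F)$; the ratio is invariant under pushing by $\phi_\omega$, while $\diam(\phi_\omega(F))\to 0$ as $|\omega|\to\infty$. Suppose, for contradiction, some $F_0$ has ratio $R>1$; then $H_h(F_0^\circ\cap J)=H_h(F_0\cap J)>0$ since $\partial F_0$ has only two points. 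Since cylinder diameters shrink to $0$, I can pick a finite word $\sigma$ whose cylinder $\phi_\sigma([0,1])$ is contained in $F_0^\circ$. For $H_h$-a.e.\ $x\in J$ the word $\sigma$ appears infinitely often in the symbolic code of $x$, so there are infinitely many $N$ with $x\in\phi_{\omega(x)|_N}(\phi_\sigma([0,1]))\subset\phi_{\omega(x)|_N}(F_0)$. Each such interval contains $x$, has diameter $r_{\omega(x)|_N}\diam F_0\to 0$, and ratio exactly $R>1$, so the local supremum at $x$ over arbitrarily small intervals is at least $R$; this contradicts Corollary \ref{cor:fundamental}, which forces it to tend to $1$.

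The main obstacle I anticipate is the symbolic ``$\sigma$ occurs infinitely often $H_h$-a.e.''\ step, which rests on identifying $H_h\big|_J$ (up to normalization) with the pushforward under the coding map of the Bernoulli product measure on $\mathbb{N}^{\mathbb{N}}$ giving weight $r_i^h$ to symbol $i$, and then invoking Borel--Cantelli or shift ergodicity. Minor bookkeeping handles the countable set of points with non-unique codes and the boundary of $F_0$, as both lie in $H_h$-null sets.
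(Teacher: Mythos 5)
The paper does not actually prove Proposition~\ref{prop:density_ifs}: it is presented as a known restatement of Corollary~\ref{cor:fundamental} for self-similar sets and attributed to \cite{olsen}, so there is no in-text argument to compare yours against. Your proposal is a correct, self-contained proof, and it runs along exactly the mechanism the paper itself deploys later for the nonlinear case (in the proof of Proposition~\ref{prop:gauss_hausdorff_above}): identify the normalized Hausdorff measure with the Bernoulli/conformal measure, use ergodicity of the shift to make a fixed finite pattern recur in a.e.\ symbolic code, and use (here, exact) scale invariance to transport a putative density defect to arbitrarily small scales around a.e.\ point, contradicting Fact~\ref{fact:density}. The reduction of the three displayed equalities to the single unnormalized one via $H_h^1=H_h(J)^{-1}H_h$ is fine, and the lower bound from the density theorem is fine. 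Two small remarks. First, in the upper-bound step you only need the inclusion $\phi_\omega(F_0\cap J)\subseteq\phi_\omega(F_0)\cap J$, which already gives $H_h(\phi_\omega(F_0)\cap J)\ge r_\omega^h H_h(F_0\cap J)$ and hence ratio $\ge R$; the equality you assert would additionally require the level-$N$ cylinders $\phi_\tau([0,1])$ (not merely the sets $\phi_\tau(J)$) to be pairwise disjoint, which the Strong Separation Condition does not by itself guarantee --- but the one-sided inequality is all the contradiction needs. Second, the statement tacitly assumes $0<H_h(J)<+\infty$ (needed both to define $H_h^1$ and to invoke Fact~\ref{fact:density}); this is automatic for the finite similarity systems $G_n$ to which the proposition is applied, and under the SSC the coding map is injective, so your caveat about points with non-unique codes is vacuous.
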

In particular, we have the following.

\begin{equation}\label{eq: gestosc liniowych}
\bal
H_{h_n}(J_n) 
&= \inf \left \{  \frac{{\rm{diam}}^{h_n}(F)}{H_{h_n}^1(F \cap J_n)}: F \subset [0,1] \,  \text{is a closed interval}\right\}
\\
and
\\
\frac{1}{H_{h_n}(J_n) }
&= \sup \left \{  \frac{{\rm{diam}}^{h_n}(F)}{H_{h_n}^1(F \cap J_n)}: F \subset [0,1] \,  \text{is a closed interval}\right\},
\eal
\end{equation}
where, we recall, $H_{h_n}^1$ denotes the normalized Hausdorff measure on $J_n$.

\begin{notation}
We will also use the symbol $m_n$ to denote the normalized 
$h_n$--dimensional Hausdorff measure on $J_n$, i.e.
$$
m_n:={H^1_{h_n}}\big|_{J_n}
$$
\end{notation}
Note that $m_n$ is the  $h_n$-{conformal measure} on $J_n$, i.e. 
it satisfies
$$m_n(g_j(A))=\int_A|g_j'|^{h_n}  dm_n$$
for every $j\le n$ and a  Borel subset $A\subset [0,1]$.


Recall that, to ease notation, we shall also write
\begin{equation}\label{eq:hausdorff_value_notation}
H_n:=H_{h_n}(J_n).
\end{equation}


\subsection{An auxiliary abstract result}

\begin{lemma}
\label{l120260303}
Let $t_n\in (0,1)$, $n\in\N$, be a sequence such that
\beq\label{120260210-03032026}
\limsup_{n\to\infty}n(1-t_n)<+\infty.
\eeq
If $0\le p\le q$ are integers such that $q-p+1\le n$ and $u_j\in [0,1]$, $p\le j\le q$ are such numbers that
$$
\sum_{j=p}^qu_j=1,
$$
then
$$
\frac{\sum_{j=p}^q u_j^{t_n}-1}{1-t_n}
\le \ln n+O\big((1-t_n)\ln^2 n \big ).
$$
\end{lemma}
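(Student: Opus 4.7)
The plan is to reduce the bound to the case of a uniform distribution via concavity, and then to extract the logarithmic asymptotics by a Taylor expansion.

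First I would exploit concavity. Because $t_n\in(0,1)$, the function $x\mapsto x^{t_n}$ is concave on $[0,\infty)$. Setting $N:=q-p+1\le n$ and applying Jensen's inequality (equivalently, the power-mean inequality) under the constraint $\sum_{j=p}^q u_j=1$ shows that $\sum_{j=p}^q u_j^{t_n}$ is maximized when $u_j=1/N$ for every $j$; hence
\[
\sum_{j=p}^q u_j^{t_n}\;\le\;N\cdot N^{-t_n}\;=\;N^{1-t_n}\;\le\;n^{1-t_n}.
\]
Consequently it is enough to produce the asserted upper bound for $(n^{1-t_n}-1)/(1-t_n)$, with the quantities $u_j$ no longer appearing.

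Second, set $\varepsilon_n:=1-t_n$. The standing hypothesis $\limsup_{n\to\infty} n\varepsilon_n<+\infty$ yields $\varepsilon_n\ln n=O(\ln n/n)=o(1)$. Writing $n^{\varepsilon_n}=\exp(\varepsilon_n\ln n)$ and Taylor-expanding the exponential about $0$ gives
\[
\frac{n^{\varepsilon_n}-1}{\varepsilon_n}\;=\;\ln n+\tfrac{1}{2}\,\varepsilon_n(\ln n)^2+O\!\big(\varepsilon_n^2(\ln n)^3\big)\;=\;\ln n+O\!\big(\varepsilon_n(\ln n)^2\big),
\]
the implicit constants being uniform in $n$ thanks to the boundedness of $n\varepsilon_n$ (which forces $\varepsilon_n(\ln n)^k\to 0$ for every fixed $k$). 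Combined with the first step this is exactly the claimed inequality.

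The argument is essentially computational, so I do not anticipate a genuine obstacle. The only point that requires a small amount of care is the uniformity of the Taylor remainder; this is immediate once one observes that $\varepsilon_n\ln n\to 0$ under the hypothesis on $t_n$, so the expansion of $e^x$ about $x=0$ is valid with a bounded implicit constant for all sufficiently large $n$.
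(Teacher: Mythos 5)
Your proof is correct and follows essentially the same route as the paper: both arguments bound $\sum_{j=p}^q u_j^{t_n}$ by its value at the uniform distribution, estimate this by $n^{1-t_n}=e^{(1-t_n)\ln n}$, and conclude via the Taylor expansion of the exponential, using $\limsup_n n(1-t_n)<\infty$ to control the remainder. Your explicit appeal to concavity/Jensen to justify the maximization step is a welcome elaboration of a point the paper only asserts.
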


\begin{proof}
Since the sum $\sum_{j=q}^l u_j^{t_n}$ attains its maximum when $u_j=\frac{1}{q-p+1}$, and since the number of summands does not exceed $n$, we get
$$
\bal
\sum_{j=p}^q u_j^{t_n}
&\le (q-p+1)(q-p+1)^{-t_n}
=(q-p+1)^{1-t_n}
\le n\left (\frac 1 n\right )^{t_n}
=e^{(1-t_n)\ln n}
\\
&=1+(1-t_n)\ln n+O\left(((1-t_n)\ln n)^2\right ).
\eal
$$
Consequently,
$$
\frac{\sum_{j=p}^q u_j^{t_n}-1}{1-t_n}
\le \ln n+O\big((1-t_n)\ln^2 n \big ).
$$
We are done.
\end{proof}

\vspace{20pt}

\part{Asymptotics of Hausdorff measure for piecewise linear analogue of the Gauss map.}\label{part:1}

\section{Abstract preparations}

We start with recalling the notion of the entropy of a finite partition. If $(X,\mu)$ is a probability space and $\mathcal A=\{A_1\dots A_m\}$ is a finite partition of the space $X$ into measurable sets of positive measure, then the entropy of the partition $\mathcal A$ is defined by the formula

$$H(\mathcal A):=-\sum_{j=1}^m \mu(A_j)\ln\mu(A_j).$$

Given three positive integers $k\le l\le n$, consider the probability space being the interval 
$$
[b_{l+1}, b_k]
$$ 
endowed with the normalized Lebesgue measure. Denote by $\mathcal P_{k,l}$ the partition of $[b_{l+1}, b_k]$ into $l-k+1$ subintervals:
$$[b_{l+1}, b_l], \dots  ,[b_{k+1}, b_k].$$
The normalized Lebesgue measure of each interval $[b_{j+1}, b_j]$, $1\le k\le j\le l$ is equal to
\beq\label{120260127}
w_j=w_j(k,l):=\frac{\frac 1 j-\frac{1}{j+1}}{\frac 1 k-\frac 1 {l+1}}.
\eeq
Note that then
\beq\label{220260127}
H(\mathcal P_{k,l})=-\sum_{j=k}^lw_j\log w_j.
\eeq

Let $t_n\in (0,1)$, $n\in\N$, be a sequence such that
\beq\label{120260210}
\limsup_{n\to\infty}n(1-t_n)<+\infty.
\eeq
As an immediate consequence of Lemma~\ref{l120260303}, we get that
\begin{equation}\label{eq:sum_powers-2}
\frac{\sum_{j=k}^l w_j^{t_n}-1}{1-t_n}
\le \ln n+O\big((1-t_n)\ln^2 n \big ).
\end{equation}

Denoting, as we will always do,
\beq\label{320260127}
\Delta_j:=[b_{j+1}, b_j]=\left[\frac{1}{j+1}, \frac 1 j\right], \  j\in \N,
\eeq
we can write
\beq\label{420260127}
w_j=w_j(k,l)
=\frac{\frac 1 j-\frac{1}{j+1}}{\frac 1 k-\frac 1 {l+1}}
=\frac{|\Delta_j|}{|[b_{k+1}, b_k]|}.
\eeq


For a real number $t$, we denote by $[t]$ the integer par of $t$, i.e. the largest integer $\le t$. Our first result in this section is the following.

\begin{lemma}\label{cor:large_entropy}
For each $n\in\mathbb N$ and $\varepsilon > 0$ consider the partition
 $\mathcal P_{k,l}$, where $k=k(n,\varepsilon)=[n-n^{1-\varepsilon}]+1$ and $l=l(n)=n$. Then 
 $$\liminf_{n\to\infty} \frac{H(\mathcal P_{k,l})}{\ln n}\ge 1-\varepsilon. $$
\end{lemma}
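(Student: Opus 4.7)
The partition $\mathcal{P}_{k,l}$ has exactly $N := l - k + 1 = n - k$ atoms, and by the choice of $k$ this number satisfies $N = n^{1-\varepsilon} + O(1)$. The maximum possible entropy of any finite partition into $N$ atoms is $\ln N$, so the target lower bound $(1-\varepsilon)\ln n$ already agrees with the trivial upper bound to leading order. My plan is therefore to show that the weights $w_j$ are essentially uniform on the index range $j \in \{k,\ldots,n-1\}$, which forces the entropy to be close to $\ln N$.

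First I would use (\ref{120260127}) to rewrite the weights explicitly as
$$w_j = \frac{1/(j(j+1))}{(n-k)/(kn)} = \frac{kn}{j(j+1)(n-k)}, \qquad k \le j \le n-1.$$
Since $j(j+1) \ge k(k+1)$ on this range,
$$w_j \le \frac{n}{(k+1)(n-k)}.$$
From $k = [n - n^{1-\varepsilon}] + 1 > n - n^{1-\varepsilon}$ it follows that $n/(k+1) = 1 + O(n^{-\varepsilon})$, and hence
$$w_j \le \frac{1 + O(n^{-\varepsilon})}{n-k}$$
uniformly in $j$.

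Next I would invoke the elementary observation that for any finite probability vector with all entries bounded by $M$, the Shannon entropy is at least $-\ln M$; this holds because $-w_j \ln w_j \ge -w_j \ln M$ term-by-term and the $w_j$ sum to $1$. Applied to the above bound this yields
$$H(\mathcal{P}_{k,l}) \ge \ln(n-k) - \ln(1 + O(n^{-\varepsilon})).$$
From the complementary inequality $k \le n - n^{1-\varepsilon} + 1$ we get $n - k \ge n^{1-\varepsilon} - 1$, so $\ln(n-k) \ge (1-\varepsilon)\ln n + O(1)$ as $n \to \infty$. Dividing through by $\ln n$ and passing to $\liminf$ gives the conclusion.

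I do not anticipate any serious obstacle: every step is a direct elementary estimate. The only mild point to be careful about is absorbing the $O(1)$ rounding error introduced by the floor function, together with the multiplicative $1 + O(n^{-\varepsilon})$ correction coming from the non-uniformity of $j(j+1)$ across the index range; both of these perturbations are $o(\ln n)$ and therefore disappear when one divides by $\ln n$ and takes the limit.
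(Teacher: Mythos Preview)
Your proof is correct and genuinely simpler than the paper's. The paper establishes both lower and upper bounds for $w_j$, decomposes $-\ln w_j$ into three pieces (one involving $\ln(n-n^{1-\varepsilon}+1)$, one $\varepsilon\ln n$, and one $\ln(j(j+1))$), and then has to bound the sum $\sum w_j \ln(j(j+1))$ from below by replacing each summand with its smallest value and counting the number of terms. Your approach bypasses all of this by invoking the one-line principle that $\max_j w_j \le M$ implies $H(\mathcal P_{k,l}) \ge -\ln M$; this requires only the upper bound on $w_j$, and the uniform upper bound follows immediately from $j(j+1)\ge k(k+1)$ on the index range. What your argument buys is economy: one inequality instead of a multi-term decomposition. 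What the paper's argument buys is a slightly more explicit asymptotic, since it tracks the lower-order contributions separately; but for the stated $\liminf$ conclusion none of that precision is needed, and your route is preferable.
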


\begin{proof}
We have
$$
w_j=\frac{\frac 1 j-\frac{1}{j+1}}{\frac 1 k-\frac 1 n}=\frac{1}{j(j+1)}\cdot \frac{k}{n-k}\cdot n.
$$
Since the function $x\mapsto \frac{x}{n-x}$ is increasing on the interval $(0,n)$, and 
$$
n-n^{1-\varepsilon}
\le [n - n^{1-\varepsilon}]+1
\le n-n^{1-\varepsilon}+1,
$$
we therefore get the following bounds: 
\[
w_j \ge  \frac{\frac{1}{j}-\frac{1}{j+1}}{\frac{1}{ n - n^{1-\varepsilon}}-\frac{1}{n}} = \frac{\frac{1}{j(j+1)}}{\frac{n^{1-\varepsilon}}{n(n - n^{1-\varepsilon})}} = \frac{\frac{1}{j(j+1)}}{\frac{n^{-\varepsilon}}{n - n^{1-\varepsilon}}} = \frac{(n - n^{1-\varepsilon})n^{\varepsilon}}{j(j+1)}
\]
and 
$$
w_j\le\frac{(n-n^{1-\varepsilon}+1)n^\varepsilon}{j(j+1)}\cdot (1-n^{\varepsilon-1})^{-1}.
$$
Thus,
$$
\bal
H(&\mathcal P_{k,l})=-\sum \limits_{j= [n-n^{1-\varepsilon}]+1}^{n}w_j\ln{w_j} \ge
\\
&\ge -\sum \limits_{j = [n-n^{1-\varepsilon}]+1}^{n}w_j\ln{\frac{(n - n^{1-\varepsilon}+1)n^{\varepsilon}(1+n^{\varepsilon-1})^{-1}}{j(j+1)}} 
\\
&=-\sum \limits_{j = [n-n^{1-\varepsilon}]+1}^{n}w_j\left[\ln{((n - n^{1-\varepsilon}+1)n^{\varepsilon}})-\ln(1+n^{\varepsilon-1})- \ln{(j(j+1))}\right ] 
\\
&= -\ln{(n - n^{1-\varepsilon}+1)} - \ln{n^{\varepsilon}}-\ln(1+n^{\varepsilon-1})+ \sum \limits_{j = [n-n^{1-\varepsilon}]+1}^{n}w_j\ln{(j(j+1))} 
\\
&=-\ln{(n - n^{1-\varepsilon}+1)} - {\varepsilon}\ln{n}-\ln(1+n^{\varepsilon-1})+ \sum \limits_{j = [n-n^{1-\varepsilon}]+1}^{n}w_j\ln{(j(j+1))}.
\eal
$$
Now, focusing on the last term of this expression, by observing that
$$
\frac{\ln (j(j+1))}{j(j+1)}\ge \frac{\ln (n(n+1))}{n(n+1)}
$$
and noting that the number of summands in this last term is larger than or equal to $n^{1-\varepsilon}$, we get
\[
\bal
 \sum \limits_{j = [n-n^{1-\varepsilon}]+1}^{n}w_j\ln{(j(j+1))} 
&\ge \sum \limits_{j = [n-n^{1-\varepsilon}]+1}^{n}\frac{(n - n^{1-\varepsilon})n^{\varepsilon}}{j(j+1)}\ln{(j(j+1))} 
\\
&\geq (n - n^{1-\varepsilon})n^{\varepsilon} \cdot \frac{n^{1-\varepsilon}}{n(n+1)} \ln{(n(n+1))} 
\\
&= \frac{n - n^{1-\varepsilon}}{n+1} \ln{(n(n+1))}\cdot \frac{n^{1-\varepsilon}}{n^{1-\varepsilon}}.
\eal
\]
Putting the above estimates together, we obtain
\[
\bal
\frac{H(\mathcal P_{k,l})}{\ln{n}} 
&\geq \frac{-\ln{(n - n^{1-\varepsilon}+1)} - {\varepsilon}\ln{n}-\ln (1+n^{\varepsilon-1}) + \frac{n - n^{1-\varepsilon}}{n+1} \ln{(n(n+1))}}{\ln{n}}
\\
&= -\varepsilon - \left[ 1 + \frac{\ln{(1- \frac{1}{n^\varepsilon}+\frac{1}{n})}}{\ln{n}}\right]-\frac{\ln(1+n^{\varepsilon-1})}{\ln n}+ \frac{1 - n^{-\varepsilon}}{1+\frac{1}{n}} \left [\frac{\ln n+\ln (n+1)}{\ln n}\right ].
\eal
\]
So, taking the limit as $n \to \infty$, we obtain
\[
\liminf \limits_{n \to \infty} \frac{H(\mathcal P_{k,l})}{\ln{n}}  \geq 2-1-\varepsilon = 1-\varepsilon.
\]
The proof of Lemma~\ref{cor:large_entropy} is complete. 
\end{proof}

Sticking to the partitions $\mathcal P_{k,l}$ from Lemma~\ref{cor:large_entropy}, we shall prove the following.

\begin{lemma}
\label{l120260127}
If $s_n\in [0,1)$ for all $n\ge 1$ large enough,
$$
\lim_{n \to \infty}s_n=1,
$$
and $\varepsilon\in (0,1)$, then
\[
\varliminf\limits_{n \to \infty} \frac{1}{(1-s_n)\ln{n}}
\left(\frac{ \sum \limits_{j = [n-n^{1-\varepsilon}]+1}^{n} |\Delta_j|^{s_n}}{{\rm{diam}}^{s_n}\left(\bigcup_{j = [n-n^{1-\varepsilon}]+1}^{n} \Delta_j\right)}-1\right)
\ge 1-\varepsilon.
\]
\end{lemma}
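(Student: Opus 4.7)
The plan is to rewrite the ratio appearing in the statement as $\sum_{j=k}^n w_j^{s_n}$ for the natural probability vector $(w_j)$, then exploit the elementary inequality $e^x \ge 1+x$ to bring the entropy of the partition into the picture, and finally close the argument with Lemma~\ref{cor:large_entropy}.

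Concretely, I would set $k := [n-n^{1-\varepsilon}]+1$, $I_n := \bigcup_{j=k}^n \Delta_j = [b_{n+1},b_k]$, and
\[
w_j := \frac{|\Delta_j|}{|I_n|}, \qquad k \le j \le n,
\]
so that $(w_j)_{j=k}^n$ is a probability vector and
\[
\frac{\sum_{j=k}^n |\Delta_j|^{s_n}}{|I_n|^{s_n}} = \sum_{j=k}^n w_j^{s_n}.
\]
Next I would write $w_j^{s_n} = w_j \exp\bigl(-(1-s_n)\ln w_j\bigr)$ and apply $e^x \ge 1+x$ with $x := -(1-s_n)\ln w_j \ge 0$. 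Summing against $\sum_j w_j = 1$ yields
\[
\sum_{j=k}^n w_j^{s_n} - 1 \;\ge\; -(1-s_n)\sum_{j=k}^n w_j \ln w_j \;=\; (1-s_n)\, H(\mathcal{P}_{k,n}).
\]
Dividing through by $(1-s_n)\ln n$ reduces the lemma to the entropy lower bound $\liminf_{n\to\infty} H(\mathcal{P}_{k,n})/\ln n \ge 1-\varepsilon$.

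The hard part is really only the minor mismatch that Lemma~\ref{cor:large_entropy} furnishes this bound for $\mathcal{P}_{k,n-1}$ rather than $\mathcal{P}_{k,n}$. To bridge this gap I would introduce $\lambda := |[b_n,b_k]|/|I_n|$, note that the weights of $\mathcal{P}_{k,n}$ for $j \le n-1$ equal $\lambda$ times those of $\mathcal{P}_{k,n-1}$ while the extra atom carries mass $1-\lambda$, and deduce the clean identity
\[
H(\mathcal{P}_{k,n}) \;=\; \lambda\, H(\mathcal{P}_{k,n-1}) + h(\lambda),
\]
where $h$ is the binary entropy. A direct calculation gives $1-\lambda = k/\bigl(n(n+1-k)\bigr) = O(n^{\varepsilon-1}) \to 0$, so $\lambda \to 1$ and $h(\lambda) \to 0$; thus the $\liminf$ of $H(\mathcal{P}_{k,n})/\ln n$ coincides with that of $H(\mathcal{P}_{k,n-1})/\ln n$, which by Lemma~\ref{cor:large_entropy} is at least $1-\varepsilon$. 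Beyond this bookkeeping every step is elementary.
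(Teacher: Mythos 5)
Your proof is correct and follows essentially the same route as the paper: rewrite the ratio as $\sum_{j=k}^n w_j^{s_n}$, use $e^x\ge 1+x$ to bound $\sum_{j=k}^n w_j^{s_n}-1$ from below by $(1-s_n)$ times the entropy of the partition, and conclude with Lemma~\ref{cor:large_entropy}. Your additional step reconciling $\mathcal P_{k,n}$ with $\mathcal P_{k,n-1}$ via the grouping identity $H(\mathcal P_{k,n})=\lambda H(\mathcal P_{k,n-1})+h(\lambda)$ and the estimate $1-\lambda=O(n^{\varepsilon-1})$ is careful bookkeeping that the paper silently elides (it labels the entropy of the full $(n-k+1)$-atom partition as $H(\mathcal P_{k,n-1})$), and it correctly shows the discrepancy is asymptotically negligible.
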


\begin{proof}
Using the notation from \eqref{120260127}, \eqref{420260127}, and Lemma~\ref{cor:large_entropy}, we have
$$
\bal
L_n:&=\frac{ \sum \limits_{j = [n-n^{1-}]+1}^{n} |\Delta_j|^{s_n}}{{\rm{diam}}^{s_n}\left(\bigcup_{j = [n-n^{1-\varepsilon}]+1}^{n} \Delta_j\right)}-1
=\sum\limits_{j = k}^{n}w_j^{s_n}-1
\\
&=\sum\limits_{j = k}^{n}(w_j^{s_n}-w_j)
=\sum\limits_{j = k}^{n}w_j(w_j^{s_n-1}-1)
\\
&=\sum\limits_{j = k}^{n}w_j\left(e^{(s_n-1)\log w_j} - 1 \right).
\eal
$$
Noting also that $e^x-1\ge x$ for all $x\in\mathbb R$, we therefore obtain
$$
\frac{L_n}{1-s_n}
\ge \frac{1}{1-s_n}\sum_{j=k}^nw_j(1-s_n)(-\ln w_j)=H(\mathcal P_{k,n-1}).
$$
Applying now Lemma~\ref{cor:large_entropy}, we thus get
$$
\varliminf\limits_{n \to \infty} \frac{L_n}{(1-s_n)\ln{n}}
\ge 1-\varepsilon.
$$
The proof of Lemma~\ref{l120260127} is complete. 
\end{proof}

\section{Asymptotics of $H_n$: Lower bound.}\label{sec:linear_below}

We start with  a straightforward estimate for the Hausdorff measure of the sets $J_n$.  
\begin{prop}\label{lem: miara mniejsza niz 1}
Recall that $G$ is the piecewise linear analogue of the Gauss map. Then 
$$
H_n(J_n) \leq 1
$$ 
for all $n \in \N$, where, we recall, $J_n$ is the limit set of the truncated subsystem, introduced in Definition~\ref{def:IFS Sn}.
\end{prop}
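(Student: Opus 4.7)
The plan is to use the infimum formula for $H_{h_n}(J_n)$ recorded in Proposition~\ref{prop:density_ifs} (equivalently, equation \eqref{eq: gestosc liniowych}), which applies since the linear system $G_n$ consists of similarities satisfying the Strong Separation Condition (the image intervals $\Delta_1,\dots,\Delta_n$ are pairwise disjoint except possibly at endpoints). This formula reads
\[
H_n = H_{h_n}(J_n) = \inf\left\{\frac{\operatorname{diam}^{h_n}(F)}{H^1_{h_n}(F\cap J_n)} : F\subset [0,1]\ \text{is a closed interval}\right\}.
\]
Taking the single admissible test interval $F:=[0,1]$ and observing that $J_n\subset [0,1]$, one has $F\cap J_n=J_n$, so that $H^1_{h_n}(F\cap J_n)=H^1_{h_n}(J_n)=1$ by the normalization of $H^1_{h_n}$. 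Since $\operatorname{diam}(F)=1$, the ratio equals $1$, and the infimum is therefore bounded above by $1$. This yields $H_n\le 1$, finishing the proof.

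As an alternative, more elementary route bypassing the density formula, I would use Bowen's formula for self-similar sets under SSC: the dimension $h_n$ is characterized by
\[
\sum_{k=1}^n \left(\frac{1}{k(k+1)}\right)^{h_n}=1.
\]
Iterating the contraction ratios, one has for every $l\in\N$
\[
\sum_{\omega\in\{1,\dots,n\}^l}\operatorname{diam}(g_\omega([0,1]))^{h_n}
=\left(\sum_{k=1}^n\left(\frac{1}{k(k+1)}\right)^{h_n}\right)^l=1.
\]
Since the family $\mathcal F^n_l=\{g_\omega([0,1]):\omega\in\{1,\dots,n\}^l\}$ covers $J_n$ by intervals of diameter at most $\bigl(\max_{1\le k\le n}\frac{1}{k(k+1)}\bigr)^l=2^{-l}$, we obtain a $\delta$-cover with total $h_n$-weight $1$ for every $\delta>0$, whence $H_{h_n}(J_n)\le 1$.

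There is no serious obstacle here: the bound is essentially a one-line corollary of the density characterization, or equivalently of the Moran cover by cylinders, and the result is consistent with the asymptotic statement $\lim_{n\to\infty} H_n=1$ already known from \cite{uz}. The nontrivial content of the paper is the sharper asymptotic rate, not this soft upper bound.
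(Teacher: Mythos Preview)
Your second, ``alternative'' route via Moran covers is exactly the paper's proof: cover $J_n$ by the $k$-th generation cylinders, use that the contraction ratios satisfy $\sum_{i=1}^n a_i^{h_n}=1$ so that the total $h_n$-weight of the cover equals $1$ for every $k$, and let $k\to\infty$.

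Your first argument, via the infimum formula of Proposition~\ref{prop:density_ifs} with the single test interval $F=[0,1]$, is also correct and is shorter. The trade-off is that it invokes the density characterization (which rests on the density theorems of Section~2.2 and Olsen's observation for similarity IFSs), whereas the paper's Moran-cover argument is completely elementary, using nothing beyond the definition of Hausdorff measure and Bowen's equation. Both routes are valid; the paper simply chose the one with fewer prerequisites.
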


\begin{proof}
Recall that $a_j = b_j - b_{j+1} = \frac{1}{j} - \frac{1}{j+1}$ for all $j\in\N$. For every $k\in\N$, consider the cover of $J_n$ by the cylinders of the $k$-th generation defined in Definition \ref{def: generacje fnl}, i.e. the elements of the cover $\mathcal F^n_k$.

There are $n^k$ intervals in this cover and
\[
\begin{aligned}
\sum\limits_{J\in \mathcal F^n_k } |J|^{h_n} 
&= \sum\limits_{1\le i_1 \dots i_k \leq n} \left(a_{i_1}\cdot \dots \cdot a_{i_k} \right)^{h_n} = \sum\limits_{1\le i_1 \dots i_k \leq n} a_{i_1}^{h_n}\cdot \dots \cdot a_{i_k} ^{h_n} 
\\
&= \left [a_{1}^{h_n} + \dots + a_{n} ^{h_n} \right ]^k = 1^k = 1.
\end{aligned}
\]
Since also $\lim_{k \to \infty}\sup\big\{{\rm{diam}}(J):J \in \mathcal F^n_k\big\}= 0$, the proposition thus follows.
\end{proof}

Equipped with Lemma~\ref{l120260127}, we can prove the following first estimate of the growth of the value $H_n$.

\begin{theorem}\label{thm: estimate from below}
\[
\liminf \limits_{n \to \infty }\frac{1-H_n}{1-h_n} \cdot \frac{1}{\ln n}\geq 1.
\]
\end{theorem}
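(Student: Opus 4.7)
The plan is to apply the density formula \eqref{eq: gestosc liniowych} for $1/H_n$ with a carefully chosen family of test intervals. Fix $\varepsilon\in(0,1)$ and, following the notation of Lemma~\ref{cor:large_entropy}, put $k_n:=[n-n^{1-\varepsilon}]+1$, so that
$$F_n:=\bigcup_{j=k_n}^{n}\Delta_j = [b_{n+1},b_{k_n}]$$
is a closed subinterval of $[0,1]$ (this is immediate from the chain $b_{n+1}<b_n<\dots<b_{k_n}$). Since $F_n$ is admissible in the supremum in \eqref{eq: gestosc liniowych}, it will suffice to show that the ratio $m_n(F_n\cap J_n)/{\rm{diam}}^{h_n}(F_n)$ exceeds $1+(1-\varepsilon+o(1))(1-h_n)\ln n$.

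For the numerator I would exploit the self-similarity: because the images $g_j([0,1])=\Delta_j$ meet only at endpoints, $J_n\cap \Delta_j=g_j(J_n)$ up to a countable set, and the $h_n$-conformality of $m_n$ (recorded just after \eqref{eq:hausdorff_value_notation}) gives $m_n(g_j(J_n))=|g_j'|^{h_n}=a_j^{h_n}=|\Delta_j|^{h_n}$. Summing,
$$m_n(F_n\cap J_n)=\sum_{j=k_n}^n|\Delta_j|^{h_n}.$$
For the denominator, ${\rm{diam}}(F_n)=\sum_{j=k_n}^n|\Delta_j|$, so setting
$$L_n:=\frac{\sum_{j=k_n}^n|\Delta_j|^{h_n}}{\bigl(\sum_{j=k_n}^n|\Delta_j|\bigr)^{h_n}}-1,$$
the density formula \eqref{eq: gestosc liniowych} yields $1/H_n\ge 1+L_n$, equivalently $1-H_n\ge L_n/(1+L_n)$.

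The asymptotic behaviour of $L_n$ is precisely what Lemma~\ref{l120260127} controls. Setting $s_n:=h_n$, Hensley-type asymptotics \eqref{eq:asymp_dim for linear Gauss} ensure $n(1-h_n)\to 1/\chi<\infty$, so the hypothesis of Lemma~\ref{l120260127} holds. The lemma delivers
$$\liminf_{n\to\infty}\frac{L_n}{(1-h_n)\ln n}\ge 1-\varepsilon,$$
while the crude upper bound \eqref{eq:sum_powers-2} gives $L_n=O((1-h_n)\ln n)=O(n^{-1}\ln n)$, in particular $L_n\to 0$. Combining these two facts,
$$\liminf_{n\to\infty}\frac{1-H_n}{(1-h_n)\ln n}\ge \liminf_{n\to\infty}\frac{1}{1+L_n}\cdot\frac{L_n}{(1-h_n)\ln n}\ge 1-\varepsilon.$$
Since $\varepsilon\in(0,1)$ was arbitrary, letting $\varepsilon\to 0^+$ concludes the argument.

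I do not anticipate any serious obstacle: all the non-trivial analytic work, namely the partition-entropy lower bound in Lemma~\ref{cor:large_entropy} and its repackaging as Lemma~\ref{l120260127}, has already been carried out upstream; the linearity of the system converts ``conformal mass'' into ``length to the power $h_n$'' for free; and the density formula \eqref{eq: gestosc liniowych} does the rest. The one point worth checking carefully is the identification $J_n\cap \Delta_j=g_j(J_n)$ modulo a countable set, which is the reason one really uses the separation built into the linear system (shared endpoints are harmless since they form an $H_{h_n}$-null set).
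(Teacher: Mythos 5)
Your proposal is correct and follows essentially the same route as the paper: the same test intervals $[b_{n+1},b_{[n-n^{1-\varepsilon}]+1}]$ fed into the density formula \eqref{eq: gestosc liniowych}, conformality of $m_n$ to turn the mass into $\sum_j|\Delta_j|^{h_n}$, Lemma~\ref{l120260127} with $s_n=h_n$, and $\varepsilon\searrow 0$. The only cosmetic difference is how you pass from $1/H_n-1$ to $1-H_n$: you use $1-H_n\ge L_n/(1+L_n)$ with $L_n\to 0$ via \eqref{eq:sum_powers-2}, whereas the paper simply invokes the continuity result \eqref{eq:continuity_hausdorff}; both are fine.
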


\begin{proof} 
Using notation \eqref{120260127} (and \eqref{420260127}) and having any $\varepsilon\in (0,1)$, it follows from \eqref{eq: gestosc liniowych} that
$$
    \frac{1}{H_n}-1 = \sup  \left \{\frac{m_n(F)}{{\rm{diam}}^{h_n}(F)} :F\subset [0,1] \, \text{is a closed interval}\right \}-1
 \ge \sum\limits_{j = k(n,\varepsilon)}^{n}w_j^{h_n}-1.  
$$
Looking up at Proposition~\ref{lem: miara mniejsza niz 1} and \eqref{eq:asymp_dim for linear Gauss}, we can thus apply Lemma~\ref{l120260127} with $s_n=h_n$, $n\in \N$, to get
$$
\liminf \limits_{n \to \infty }\frac{\frac{1}{H_n}-1}{(1-h_n)\ln n}
\ge 1-\varepsilon.
$$
Letting $\varepsilon\searrow 0$, we thus get that
$$
\liminf \limits_{n \to \infty }\frac{\frac{1}{H_n}-1}{(1-h_n)\ln n}
\ge 1.
$$
Finally, invoking \eqref{eq:continuity_hausdorff}, we get
$$
\liminf \limits_{n \to \infty }\frac{1-H_n}{1-h_n} \cdot \frac{1}{\ln n}\ge 1.
$$
The proof of Theorem~\ref{thm: estimate from below} is complete.

\end{proof}
\section{Asymptotics of $H_n$: Upper bound }\label{sec:linear_above}

Now, we shall estimate from above of the ratio  $\frac{1-H_n}{1-h_n}$. Recall that
\[
0 \le \frac{1}{H_n} - 1=\frac{1-H_n}{H_n} = \sup\limits_{F}\left \{ \frac{m_n(F)}{{\rm{diam}}^{h_n}(F)}-1\right\},
\]
where supremum is taken over all closed intervals $F \subset [0,1]$. Obviously, we only need to consider the closed intervals $F$ that intersect $J_n$.
We proved in Theorem \ref{thm: estimate from below}~that
\[
\liminf\limits_{n \to \infty} \frac{1-H_n}{1-h_n} \cdot \frac{1}{\ln n} \geq 1.
\]
Moving on to estimating from above, our goal is to prove the following.

\begin{theorem}\label{thm: estimate from above}
\[
\limsup\limits_{n \to \infty} \frac{1-H_n}{1-h_n} \cdot \frac{1}{\ln n} \leq 1.
\]
\end{theorem}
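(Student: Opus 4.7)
By Proposition~\ref{prop:density_ifs} and \eqref{eq:continuity_hausdorff}, the theorem reduces to showing
\[
\rho(F):=\frac{m_n(F)}{\diam^{h_n}(F)}\le 1+(1-h_n)\ln n\,(1+o(1))
\]
uniformly for all closed intervals $F\subset[0,1]$ as $n\to\infty$. Taking the supremum in $F$ then gives $1/H_n-1\le (1-h_n)\ln n\,(1+o(1))$, which together with $H_n\to 1$ is equivalent to the required limsup bound. Since each $g_k$ is a similarity and $m_n$ is $h_n$-conformal, $\rho$ is invariant under the inverse branches $g_\omega^{-1}$, so I restrict to $F$ not contained in any single first-level cylinder $g_k([0,1])$, and, after replacing endpoints with nearest $J_n$-points, assume $L,R\in J_n$.

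Let $p<q$ be the extreme indices with $F\cap g_k([0,1])\ne\emptyset$, and write $L=b_q-\alpha a_q$, $R=b_{p+1}+\beta a_p$ with $\alpha,\beta\in[0,1]$; then $F$ contains $g_{p+1},\ldots,g_{q-1}$ entirely. Applying the density bound $m_n([0,\alpha])\le \alpha^{h_n}/H_n$ from Proposition~\ref{prop:density_ifs} (and its mirror at $1$) to the two end pieces, together with $m_n(g_k(J_n))=a_k^{h_n}$ for the middle cylinders, yields
\[
\rho(F)\le \sum_{k=0}^{m}u_k^{h_n}+\Big(\frac{1}{H_n}-1\Big)\bigl(u_0^{h_n}+u_m^{h_n}\bigr),
\]
where $m=q-p$, $x_0=\beta a_p$, $x_k=a_{p+k}$ for $1\le k\le m-1$, $x_m=\alpha a_q$, $S=\diam(F)=\sum_{k=0}^m x_k$, and $u_k=x_k/S$. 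Because $m+1\le n$ and $n(1-h_n)$ is bounded by \eqref{eq:asymp_dim for linear Gauss}, Lemma~\ref{l120260303} bounds the first sum by $1+(1-h_n)\ln n+O((1-h_n)^2\ln^2 n)$.

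The hard part will be the correction term $(1/H_n-1)(u_0^{h_n}+u_m^{h_n})$: the crude estimate $u_0^{h_n}+u_m^{h_n}\le 2^{1-h_n}>1$ merely produces a tautology after taking the supremum. My plan is a case split on the middle weight $W:=\sum_{k=1}^{m-1}u_k=1-(u_0+u_m)$. When $W\ge c_0$ for a fixed $c_0>0$, concavity of $t\mapsto t^{h_n}$ combined with $u_0+u_m\le 1-c_0$ gives $u_0^{h_n}+u_m^{h_n}\le 2^{1-h_n}(1-c_0)^{h_n}$, which is strictly less than $1$ once $h_n$ is close enough to $1$; the correction is then absorbed on the left after taking the supremum. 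In the complementary regime $W<c_0$---where the end pieces dominate and $m=1$ is possible---a more delicate argument is required: one iterates the decomposition on each dominating end piece via the conformal invariance of the reduction step, showing that after finitely many reductions the effective configuration either falls into the $W\ge c_0$ regime or has $u_0^{h_n}+u_m^{h_n}=o(1)$ uniformly in $n$. This iterative step, leveraging the arithmetic structure $a_k=1/(k(k+1))$, is the main technical bottleneck of the proof.
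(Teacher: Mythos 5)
Your setup (reduction by conformal invariance, splitting $F$ into two end pieces plus a block of full cylinders, and applying Lemma~\ref{l120260303} to the normalized lengths) is sound and parallels the paper's Step~6. The gap is in how you treat the end pieces. You bound each end piece by the \emph{global} density constant, $m_n([0,\alpha])\le \alpha^{h_n}/H_n$, which produces the self-referential inequality
\[
\rho(F)-1\le (1-h_n)\ln n\,(1+o(1))+\Big(\tfrac{1}{H_n}-1\Big)\bigl(u_0^{h_n}+u_m^{h_n}\bigr).
\]
Writing $D_n:=1/H_n-1=\sup_F(\rho(F)-1)$, the correction term is $D_n\cdot(u_0^{h_n}+u_m^{h_n})$. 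Even in your favorable regime $W\ge c_0$, where $u_0^{h_n}+u_m^{h_n}\le\theta$ with $\theta\approx 1-c_0<1$, absorption only yields $D_n\le (1-h_n)\ln n\,(1+o(1))/(1-\theta)$ --- a bound with constant $1/(1-\theta)>1$, not $1$. Since $D_n$ is itself of order $(1-h_n)\ln n$ (that is what you are trying to prove), the correction term is of the \emph{same order} as the main term unless $u_0^{h_n}+u_m^{h_n}=o(1)$, which fails whenever the end pieces carry a fixed share of the length. So the $W\ge c_0$ case does not close with the sharp constant, and the complementary case $W<c_0$ --- which you yourself flag as ``the main technical bottleneck'' --- is exactly where the end pieces dominate and the scheme degenerates; the proposal leaves it unproved.

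The paper's way around this is to never invoke $1/H_n$ on the end pieces. Instead it proves, \emph{directly and with the same sharp constant}, that $m_n(I)/\diam^{h_n}(I)\le 1+(1-h_n)\ln n+o\bigl((1-h_n)\ln n\bigr)$ for every end piece, i.e.\ for every interval of the form $g_\omega([0,r])$ or $g_\omega([r,1])$ (Lemmas~\ref{prop: 0r ograniczenie z gory} and~\ref{prop: r 1}). This is done by decomposing $[0,r]$ into an infinite union of generation-by-generation blocks (Lemma~\ref{lem: rozdzial przedzialu 0,r}) whose normalized lengths decay geometrically (Lemmas~\ref{lem:even} and~\ref{lem:odd}); the extremal Lemma~\ref{lem: dokladne wyznaczenie alpha} then forces the multiplicative error $R_n=\sum_j w_j^{h_n}\le 2^{1-h_n}/(2^{h_n}-1)=1+O(1-h_n)=1+o\bigl((1-h_n)\ln n\bigr)$ (Lemma~\ref{lem:est_wn_2}). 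With the end pieces controlled at the sharp rate, the three-piece decomposition of a general $F$ only costs a factor $3^{1-h_n}=1+O(1-h_n)$, which is negligible. If you want to salvage your approach, you would need to replace the coefficient $1/H_n$ on the end pieces by such a sharp self-improving estimate --- which is in effect what the iterative decomposition in Lemmas~\ref{lem: rozdzial przedzialu 0,r}--\ref{lem:est_wn_2} accomplishes.
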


The estimate from above is more involved  because now we have to estimate from above the supremum over all intervals.
Our proof will consist of several steps. We start with the simplest case of intervals of the form $[b_{l+1},b_{k}]$, $k \leq l$.

\

{\bf Step 1.} Estimates on the intervals  $[b_{l+1}, b_k]$.
We shall prove the following lemma. It is based on the formula \eqref{eq:sum_powers-2}.

\begin{lemma}\label{prop: 21}
\begin{equation}\label{eq: 15}
\sup\limits_{k\leq l \in \N} \left\{\frac{\frac{m_n([b_{l+1},b_k])}{{\rm{diam}}^{h_n}([b_{l+1},b_k])}-1}{1-h_n}\right\}
\le \ln n+O\left(\frac{\ln^2 n}{n} \right ).
\end{equation}
\end{lemma}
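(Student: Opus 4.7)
}

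The plan is to reduce the ratio inside the supremum to one of the ``uniform'' sums of $w_j^{h_n}$ introduced in \eqref{eq:sum_powers-2}, and then to invoke the abstract Lemma \ref{l120260303} (already shown to apply with $t_n=h_n$, since \eqref{eq:asymp_dim for linear Gauss} guarantees \eqref{120260210-03032026}).

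First I would argue that the supremum is effectively taken over the range $1\le k\le l\le n$. Indeed, if $k>n$ then $[b_{l+1},b_k]\cap J_n=\emptyset$, so $m_n([b_{l+1},b_k])=0$ and the expression inside the supremum is $\le 0$. If instead $k\le n<l$, then since $m_n$ is supported on $J_n\subseteq[b_{n+1},b_1]$ and $\diam([b_{l+1},b_k])\ge\diam([b_{n+1},b_k])$, replacing $l$ by $n$ can only increase the ratio, so it suffices to bound the case $1\le k\le l\le n$.

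Now I would use that $m_n$ is the $h_n$-conformal measure for the linear system $G_n$ (recorded just after the notation defining $m_n$). Because $g_j$ is linear with $|g_j'|=\frac{1}{j(j+1)}=|\Delta_j|$, $g_j([0,1])=\Delta_j$ and $m_n(J_n)=1$, conformality gives
\[
m_n(\Delta_j)=|\Delta_j|^{h_n}\qquad(1\le j\le n),
\]
and hence, using that the intervals $\Delta_k,\dots,\Delta_l$ partition $[b_{l+1},b_k]$,
\[
\frac{m_n([b_{l+1},b_k])}{\diam^{h_n}([b_{l+1},b_k])}
=\frac{\sum_{j=k}^{l}|\Delta_j|^{h_n}}{\Bigl(\sum_{j=k}^{l}|\Delta_j|\Bigr)^{h_n}}
=\sum_{j=k}^{l}w_j^{h_n},
\]
where $w_j=w_j(k,l)$ is precisely the weight defined in \eqref{120260127} and \eqref{420260127}. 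In particular $\sum_{j=k}^l w_j=1$ and the number of summands $l-k+1$ is $\le n$.

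Finally, Lemma \ref{l120260303} applied with $t_n:=h_n$, $p:=k$, $q:=l$ and $u_j:=w_j$ yields uniformly in $k\le l\le n$
\[
\frac{\sum_{j=k}^{l}w_j^{h_n}-1}{1-h_n}\le \ln n+O\bigl((1-h_n)\ln^2 n\bigr).
\]
Since $1-h_n=O(1/n)$ by \eqref{eq:asymp_dim for linear Gauss}, the error term is $O(\ln^2 n/n)$, and taking the supremum over the (effectively reduced) range of $k\le l$ gives \eqref{eq: 15}. There is no real obstacle in this argument: the entire content is the conformal identity that converts the geometric ratio into the normalized sum $\sum w_j^{h_n}$, after which the work done in Lemma \ref{l120260303} finishes the bound.
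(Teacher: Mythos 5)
Your proposal is correct and follows essentially the same route as the paper: reduce to $k\le l\le n$ by intersecting with $[b_{n+1},1]$, use conformality (linearity) to write the ratio as $\sum_{j=k}^l w_j^{h_n}$, and then apply Lemma~\ref{l120260303} (the paper cites its consequence \eqref{eq:sum_powers-2}, which is the same estimate) together with $1-h_n=O(1/n)$. The only cosmetic difference is that you say $[b_{l+1},b_k]\cap J_n$ is empty when $k>n$ where the paper notes it is at most a single point, but either way the measure vanishes.
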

\begin{proof}

Fix two positive integers $l \geq k$ and put  
\begin{equation}\label{1202602010}
F := [b_{l+1},b_k].
\end{equation}
If $n \leq l$ then the set $[b_{l+1},b_{n+1}]\cap J_n$ consists at most of one point, whence $m_n([b_{l+1},b_{n+1}])=0$. Therefore, we  then get
\begin{equation}\label{2202602010}
\frac{m_n(F)}{{\rm{diam}}^{h_n}(F)} - 1 = \frac{m_n([b_{n+1},b_k])}{{\rm{diam}}^{h_n}(F)} - 1 \leq \frac{m_n([b_{n+1},b_k])}{{\rm{diam}}^{h_n}([b_{n+1},b_k])} - 1. 
\end{equation}
Thus, from now on we may and we will assume that $n\geq l$. By conformality of the measure $m_n$, we get 
$$
m_n([b_{l+1}, b_k]) = \sum \limits_{j=k}^{l}m_n([b_{j+1}, b_{j}])
\  \  and  \  \
m_n([b_{j+1}, b_{j}]) = |b_j-b_{j+1}|^{h_n}.
$$  
Keeping the notation $w_j = \frac{|[b_{j+1}, b_j]|}{|[b_{l+1}, b_k]|}$ for $j = k \dots l$, looking up at \eqref{eq:asymp_dim for linear Gauss}, and applying \eqref{eq:sum_powers-2}, we obtain
\begin{equation}\label{eq: 17'}
\frac{\frac{m_n([b_{l+1}, b_k])}{{\rm{diam}}^{h_n}([b_{l+1}, b_k])}-1}{1-h_n} 
= \frac{\sum\limits_{j = k}^{l} w_j^{h_n} - 1}{1-h_n} 
=\ln n+O\big((1-h_n)\ln^2 n \big )
=\ln n+O\left(\frac{\ln^2 n}{n} \right ).
\end{equation}
Along with \eqref{1202602010} and \eqref{2202602010}, this completes the proof of Lemma~\ref{prop: 21}, and simultaneously  the first step of the proof of Theorem~\ref{thm: estimate from above}.
\end{proof}

As a consequence of of this step, we almost immediately get the second step.

\

{\bf Step 2.} Estimate for the sets $F$ of the form $[0,b_k]$, $k\in\N$.
\begin{lemma}\label{prop: 0, bk}
\begin{equation}\label{eq: 0,bk}    
\sup\limits_{k \in \N}\left\{ \frac{\frac{m_n([0,b_k])}{({\rm{diam}}[0,b_k])^{h_n}}-1}{1-h_n}\right\}
 \leq \ln n+O\left(\frac{\ln^2 n}{n} \right ).
\end{equation}
\end{lemma}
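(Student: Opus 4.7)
My plan is to reduce the estimate on intervals of the form $[0,b_k]$ directly to the estimate already proved in Lemma~\ref{prop: 21} for intervals of the form $[b_{l+1},b_k]$. The point is that $[0,b_k]$ extends further to the left than any cylinder at the top of $J_n$, but this extension carries no mass of $m_n$.

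First I would split into two cases according to whether $k\le n$ or $k>n$. In the case $k>n$, we have $b_k\le b_{n+1}$, and since $J_n\subseteq[b_{n+1},1]$, the intersection $[0,b_k]\cap J_n$ is either empty or the single point $b_{n+1}$ (when $k=n+1$). In either case $m_n([0,b_k])=0$, so
\[
\frac{m_n([0,b_k])}{{\rm{diam}}^{h_n}([0,b_k])}-1=-1<0,
\]
and the claimed inequality holds trivially.

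In the case $k\le n$, the key observation is the identity $m_n([0,b_k])=m_n([b_{n+1},b_k])$, which again follows from $J_n\subseteq[b_{n+1},1]$ together with the fact that the single point $b_{n+1}$ has $m_n$-measure zero. Combined with the diameter inequality ${\rm{diam}}([0,b_k])=b_k\ge b_k-b_{n+1}={\rm{diam}}([b_{n+1},b_k])$, this gives
\[
\frac{m_n([0,b_k])}{{\rm{diam}}^{h_n}([0,b_k])}\le \frac{m_n([b_{n+1},b_k])}{{\rm{diam}}^{h_n}([b_{n+1},b_k])}.
\]
Taking the supremum over $k\in\{1,\dots,n\}$ and applying Lemma~\ref{prop: 21} with $l=n$ (so that the pair $(k,l)$ satisfies $k\le l$) yields the asserted bound $\ln n+O(\ln^2 n/n)$.

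There is no real obstacle here; the step is essentially a monotonicity argument reducing Step~2 to Step~1, and the only thing to verify carefully is that enlarging the interval to the left of $b_{n+1}$ strictly weakens (hence preserves) the bound because no extra mass is picked up while the diameter grows.
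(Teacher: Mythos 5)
Your proposal is correct and follows essentially the same route as the paper: the case $k>n$ is dismissed because $m_n([0,b_k])=0$, and for $k\le n$ one uses $m_n([0,b_k])=m_n([b_{n+1},b_k])$ together with $\diam([0,b_k])\ge\diam([b_{n+1},b_k])$ to reduce to Lemma~\ref{prop: 21}. No gaps.
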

\begin{proof}
Fix $k\in \N$ and set $ F := [0,b_k]$. If $n+1 \leq k$, then $[0,b_{k}]$ intersects $J_n$ at one point at most, whence $m_n([0,b_k]) = 0$. Therefore, the left-hand of \eqref{eq: 0,bk} is equal to $-1$, and thus we can assume that $k \le n$. 
But then $m_n([0,b_k])=m_n([b_{n+1},b_k])$, while $\diam([0,b_k])>\diam([b_{n+1}, b_k])$.
Invoking Lemma~\ref{prop: 21} ends the proof of Lemma~\ref{prop: 0, bk}.
\end{proof}

\par {\bf Step 3.} Estimates for intervals of the form $[0,r]$, $r \in (0,1]$.  
\begin{lemma}\label{lem: rozdzial przedzialu 0,r}
For every $r \in (0,1]$ the interval $[0,r]$ can be expressed as a union of adjacent closed intervals (for each $k$ the right endpoint of $I_k$ is the left endpoint of $I_{k+1}$):
\[
[0,r] = \bigcup_{m = 1}^{\infty}I_m.
\]
In this representation each interval $I_m$ is either  a union of some collection (finite or infinite) of intervals of $m$-th generation $\mathcal F^m$ or a ''degenerate'' interval  of the form $[b,b]$, where $b$ is an endpoint of some interval $F\in\mathcal F^m$. 
\end{lemma}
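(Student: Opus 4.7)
The plan is to build the sequence $\{I_m\}$ inductively by following the unique chain of cylinders of increasing generation that contains the point $r$, peeling off at step $m$ the collection of $m$-th generation cylinders that sit between the region already accumulated and the cylinder containing $r$.

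Concretely, set $\Delta^{(0)} := [0,1]$ and $p_0 := 0$. Inductively, suppose that at the start of step $m$ we have chosen a cylinder $\Delta^{(m-1)} = g_{\omega^{(m-1)}}([0,1]) \in \mathcal{F}_{m-1}$ with $\omega^{(m-1)} \in \mathbb{N}^{m-1}$ and $r \in \Delta^{(m-1)}$, and have defined $I_1, \ldots, I_{m-1}$ so that $\bigcup_{i=1}^{m-1} I_i = [0, p_{m-1}]$, where $p_{m-1}$ is the left endpoint of $\Delta^{(m-1)}$. The $m$-th generation cylinders contained in $\Delta^{(m-1)}$ are the countable family $\{g_{\omega^{(m-1)}} \circ g_k([0,1]) : k \in \mathbb{N}\}$, which tiles $\Delta^{(m-1)}$ in a left-to-right sequence and accumulates at the single point $g_{\omega^{(m-1)}}(0)$; this point coincides with one of the two endpoints of $\Delta^{(m-1)}$ (the left one if $g_{\omega^{(m-1)}}$ is orientation-preserving, the right one if orientation-reversing). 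If $r$ lies in the interior of one such $m$-th generation cylinder, I designate that cylinder as $\Delta^{(m)}$, let $p_m$ be its left endpoint, and set
\[
I_m := [p_{m-1}, p_m].
\]

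By the tiling structure, this interval equals the closure of the union of those $m$-th generation cylinders inside $\Delta^{(m-1)}$ that lie strictly to the left of $\Delta^{(m)}$: when the accumulation point is $q_{m-1}$ only finitely many $m$-th generation cylinders separate $\Delta^{(m)}$ from $p_{m-1}$, while when the accumulation point is $p_{m-1}$ there are infinitely many of them piling up at $p_{m-1}$. In both situations $I_m$ is a (finite or infinite) union of $m$-th generation cylinders as required. If at some generation $m_0$ the point $r$ happens to coincide with a cylinder endpoint, the chain terminates at that level and I set $I_m := \{r\}$ (the degenerate case) for all $m > m_0$. Otherwise the contractive property of the maps $g_k$ forces $\mathrm{diam}(\Delta^{(m)}) \to 0$, so $p_m \nearrow r$ and therefore $\bigcup_{m=1}^{\infty} I_m = [0, r]$; adjacency of consecutive $I_m$ is automatic from the telescoping $I_m = [p_{m-1}, p_m]$.

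The main — really the only substantive — point of the argument is the geometric dichotomy in the inductive step: that in both the "accumulation on the left" and "accumulation on the right" subcases the closed interval $[p_{m-1}, p_m]$ coincides with the closure of the union of the $m$-th generation cylinders in $\Delta^{(m-1)}$ lying to the left of $\Delta^{(m)}$. This reduces to a direct endpoint computation using the explicit form of $g_k$ and tracking the sign of the slope of $g_{\omega^{(m-1)}}$, together with a separate handling of the boundary situations where $r$ lands on a cylinder endpoint (which feed into the degenerate tail of the decomposition).
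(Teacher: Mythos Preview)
Your proposal is correct and follows essentially the same inductive construction as the paper: both track the nested chain of cylinders $\Delta^{(m)}$ containing $r$ and define $I_m$ as the interval between the left endpoint of $\Delta^{(m-1)}$ and that of $\Delta^{(m)}$, which is then the (closure of the) union of the $m$-th generation cylinders lying to the left of $\Delta^{(m)}$. The paper spells out the $m=1,2$ steps explicitly before stating the induction, whereas you state the general step directly and make the orientation dichotomy (accumulation at the left vs.\ right endpoint of $\Delta^{(m-1)}$) explicit, which the paper leaves implicit and only exploits later when comparing $|I_n|$ for even and odd $n$; minor points are that your $q_{m-1}$ is never defined (clearly the right endpoint of $\Delta^{(m-1)}$) and your boundary-case handling when $r$ hits a cylinder endpoint is slightly terse, but neither affects the substance.
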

\begin{proof}
Take the largest k such that $b_k > r$. Then $b_{k+1} \leq r$. Put $I_1 = [0,b_{k+1}]$. If $b_{k+1}=r$ then the construction ends here, and $[0,r]= I_1$. Otherwise, we continue with the second step.
We have  $I_1 \subset [0,r]$, and $I_1 \cup [b_{k+1}, b_k] \not \subset [0,r]$.
The interval $$I_1':=[b_{k+1}, b_k)$$ is an infinite union of intervals
$$[b_{k+1,j}, b_{k+1, j+1}],\quad j=1,2,\dots $$
where we denoted:  $b_{k+1,j}:=g_k(b_j)=g_k\left (\frac{1}{j}\right )$. In particular, $b_{k+1,1}=b_{k+1}.$

Now denote $j_0 = \max \{ j \geq 1: b_{k+1, j} \leq r\}$ and put (see Figure \ref{fig: first and second generation intervals})
\[
I_2 = \bigcup\limits_{j = 1}^{j_0-1}[b_{k+1,j}, b_{k+1, j+1}]=[b_{k+1}, b_{k+1,j_0}].
\]
Notice that, in the case when $j_0=1$, the summation is void and  in this case $I_2 $ is a degenerate interval
$$I_2=[b_{k+1}, b_{k+1,1}]=[b_{k+1},b_{k+1}]=\{b_{k+1}\}.$$

\begin{figure}

\tikzset{every picture/.style={line width=0.75pt}} 

\begin{tikzpicture}[x=0.75pt,y=0.75pt,yscale=-1,xscale=1]

\draw    (58,29) -- (412,29) ;
\draw    (58,38) -- (58,19) ;
\draw    (411,39) -- (411,20) ;
\draw    (368,39) -- (368,20) ;
\draw    (299,39) -- (299,20) ;
\draw [color={rgb, 255:red, 208; green, 2; blue, 27 }  ,draw opacity=1 ]   (341,39) -- (341,20) ;
\draw    (182,39) -- (182,20) ;
\draw    (141,39) -- (141,20) ;
\draw    (299,39) -- (61.84,140.21) ;
\draw [shift={(60,141)}, rotate = 336.89] [color={rgb, 255:red, 0; green, 0; blue, 0 }  ][line width=0.75]    (10.93,-3.29) .. controls (6.95,-1.4) and (3.31,-0.3) .. (0,0) .. controls (3.31,0.3) and (6.95,1.4) .. (10.93,3.29)   ;
\draw    (60,151) -- (414,151) ;
\draw    (60,160) -- (60,141) ;
\draw    (413,161) -- (413,142) ;
\draw    (301,161) -- (301,142) ;
\draw [color={rgb, 255:red, 208; green, 2; blue, 27 }  ,draw opacity=1 ]   (231,161) -- (231,142) ;
\draw    (184,161) -- (184,142) ;
\draw    (109,161) -- (109,142) ;
\draw    (368,39) -- (412.2,140.17) ;
\draw [shift={(413,142)}, rotate = 246.4] [color={rgb, 255:red, 0; green, 0; blue, 0 }  ][line width=0.75]    (10.93,-3.29) .. controls (6.95,-1.4) and (3.31,-0.3) .. (0,0) .. controls (3.31,0.3) and (6.95,1.4) .. (10.93,3.29)   ;

\draw (53,45) node [anchor=north west][inner sep=0.75pt]   [align=left] {0};
\draw (406,46) node [anchor=north west][inner sep=0.75pt]   [align=left] {1};
\draw (337,46) node [anchor=north west][inner sep=0.75pt]  [color={rgb, 255:red, 208; green, 2; blue, 27 }  ,opacity=1 ] [align=left] {r};
\draw (357,44) node [anchor=north west][inner sep=0.75pt]   [align=left] {$\displaystyle b_{k}$};
\draw (289,44) node [anchor=north west][inner sep=0.75pt]   [align=left] {$\displaystyle b_{k+1}$};
\draw (232,42) node [anchor=north west][inner sep=0.75pt]   [align=left] {...};
\draw (173,47) node [anchor=north west][inner sep=0.75pt]   [align=left] {$\displaystyle b_{n}$};
\draw (135,46) node [anchor=north west][inner sep=0.75pt]   [align=left] {$\displaystyle b_{n+1}$};
\draw (227,166) node [anchor=north west][inner sep=0.75pt]  [color={rgb, 255:red, 208; green, 2; blue, 27 }  ,opacity=1 ] [align=left] {r};
\draw (143.78,169) node [anchor=north west][inner sep=0.75pt]   [align=left] {...};
\draw (175,168) node [anchor=north west][inner sep=0.75pt]   [align=left] {$\displaystyle b_{k+1,j_{0}}$};
\draw (95.46,168) node [anchor=north west][inner sep=0.75pt]   [align=left] {$\displaystyle b_{k+1,\ 2}$};
\draw (284,168) node [anchor=north west][inner sep=0.75pt]   [align=left] {$\displaystyle b_{k+1,j_{0} +1}$};
\draw (435,22) node [anchor=north west][inner sep=0.75pt]   [align=left] {First generation};
\draw (435,144) node [anchor=north west][inner sep=0.75pt]   [align=left] {Second generation};
\draw (5,168) node [anchor=north west][inner sep=0.75pt]   [align=left] {$\displaystyle b_{k+1} = b_{k+1, 1}$};
\draw (404,169) node [anchor=north west][inner sep=0.75pt]   [align=left] {$\displaystyle b_{k}$};

\end{tikzpicture}

    \caption{First and second generation intervals.}
    \label{fig: first and second generation intervals}
\end{figure}
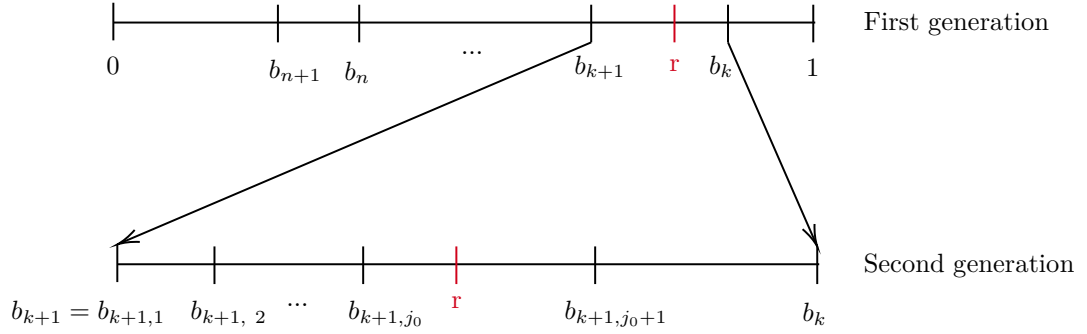

Again, if $r = b_{k+1, j_0}$ for some $j_0\in\mathbb N$ then the construction stops and the union in the representation \eqref{lem: rozdzial przedzialu 0,r} is just
\[
[0,r] = I_1 \cup I_2.
\]

Next, the inductive steps repeat the construction described above for $n=1$ and $n=2$. We describe it below in greater detail.
\par So, assume that the intervals $I_1, I_2,\dots,I_{n-1}$ are already defined so that the interval  $I_m$  is a union of some elements of $\mathcal F_m$ (perhaps degenerate, i.e. consisting of a single point)  the left endpoint of $I_m$ coincides with right endpoint of $I_{m-1}$ for all $m\le n-1$, and    
$$I_1 \cup \dots \cup I_{n-1}\subset [0,r] $$

If $[0,r] =I_1 \cup \dots \cup I_{n-1},$ then the construction ends here. Otherwise we proceed with the inductive step as follows.

Since $r$ is not the endpoint of $I_{n-1}$ and of any interval in $\mathcal{F}_m$, $m \leq n-1$,  the intervals of the collection $\mathcal F_{n-1}$ do not accumulate to the right of $r$.
Let $I_{n-1}' : = [c,d]$ be the interval belonging to the collection $\mathcal{F}_{n-1}$ and adjacent to $I_{n-1}$.
Because of the above remark, the interval  $I_{n-1}'$ is well defined.
By the construction, we have that $r \in (c,d)$. In the inductive step, define interval $I_n$ as the union:
\[
I_n = \bigcup\limits_{\substack{
        I \in \mathcal{F}_n \\
        I \subset I_{n-1}' \cap [0,r]}} I
\]
Note that
\begin{enumerate}
    \item the left endpoint of $I_n$ is  $c$,
    \item the above union is finite or infinite, depending on whether $n$ is even or odd. As in step for $n=2$, for even $n$ it may even happen that the above union of intervals  is ''degenerate'' just coincides with the left endpoint of the interval $I_{n-1}'$.
\end{enumerate}
\end{proof}

In this way, the interval $[0,r]$ is expressed as a union (finite or infinite) of the intervals $I_n$. As in previous steps, denote 
\begin{equation}\label{eq:wn}
w_k := \frac{|I_k|}{[0,r]},\quad  k = 1, 2, \dots.
\end{equation}
Then, obviously, $0\leq w_k \leq 1$ and
\[
\sum\limits_{k = 1}^{\infty} w_k = 1.
\]
Of course, it may happen that $w_k = 0$ for some  $k$; and it may happen that the summation is only over a finite number of indices $m$.

For further estimates of the sum $\sum w_i^h$ we need  the following.
\begin{lemma}\label{lem:even}
Assume that $n$ is even. Then $|I_{n}|\le |I_{n-1}|$, and , consequently, 
$$w_n\le w_{n-1}.$$
\end{lemma}
\begin{proof}
We have 
\begin{equation}\label{eq:scaling}
|b_{k}-b_{k+1}|\le b_{k+1}
\end{equation}
since, we recall, $b_k=\frac 1 k$, $k\in \N$. So, indeed, $|I_2|\le |I_1|$, since $I_2\subset [b_{k+1},b_k]$.
Now, notice that the intervals $I_{n-1}$ and $I_{n-1}'$ are affine copies of the intervals $[0,b_{k+1}]$ and $[b_{k+1},b_k]$ (for some  $k\in\mathbb N$).

Indeed,  both $I_{n-1}$ and $I_{n-1}'$ are contained in the same branch of injectiveness of $(n-2)$-th iterate of the linear Gauss map, and mapped by this branch onto $[0,b_{k+1}]$ and $[b_{k+1},b_k]$, respectively.
Since $I_n$ is a subset of $I_{n-1}'$, the lemma follows.
\end{proof}

\begin{lemma}\label{lem:odd}
Assume $n$ is odd. Then $|I_{n+2}|\le \frac{1}{4} |I_n|$ and, consequently, 
$$w_{n+2}\le \frac 1 4 w_n.$$
\end{lemma}
\begin{proof}
As in the proof of Lemma~\ref{lem:even}, we first look at the initial generations for $n=1$ and $n=3$. As we noticed, the set $I'_1$ is of the form $[b_{k+1}, b_k]$.  The set $I_3$ is a subset of the interval  $[b_{k+1,j_0}, b_{k+1,j_0+1}]$, and is a union of some intervals of third generation, contained in it, with at least one (the one most to the right) omitted. Since this last  omitted interval occupies half of the length of the interval $[b_{k+1,j_0}, b_{k+1,j_0+1}]$  , we have that

$$I_3=\bigcup_{m=m_0}^\infty [b_{k+1, j_0,m+1}, b_{k+1,j_0, m}],  $$ with some $m_0>1$, and thus
$$|I_3|\le \frac 1 2 \big|[b_{k+1,j_0}, b_{k+1,j_0+1}]\big|.$$ On the other hand, 
$$\big|[b_{k+1,j_0}, b_{k+1,j_0+1}]\big|\le \frac 1 2 |I'_1|\le \frac 1 2|I_1|$$
by \eqref{eq:scaling}.

In this way, we have the required estimate for $n=1$.
The general case  follows, as in Lemma~\ref{lem:even}, from the fact that $I_n$ and $I_{n+2}$ are affine copies of the  intervals $[0,b_{k+1}]$ and the appropriate interval of the form 
$$\bigcup_{m=m_0}^\infty [b_{k+1, j_0,m+1}, b_{k+1,j_0, m}],  $$
with some $m_0>1$, under the branch of $n-1$-th iterate of the linear Gauss map $G$. The proof of Lemma~\ref{lem:odd} is complete.
\end{proof}

\begin{figure}

\tikzset{every picture/.style={line width=0.75pt}} 

\begin{tikzpicture}[x=0.75pt,y=0.75pt,yscale=-1,xscale=1]

\draw    (158.83,73) -- (393,73) ;
\draw    (159,82) -- (159,63) ;
\draw    (512,83) -- (512,64) ;
\draw    (393,83) -- (393,64) ;
\draw    (279,83) -- (279,64) ;
\draw [color={rgb, 255:red, 208; green, 2; blue, 27 }  ,draw opacity=1 ]   (323,82) -- (323,63) ;
\draw    (279,64) .. controls (280.83,28.13) and (334.83,63.13) .. (334.83,34.13) ;
\draw    (393,64) .. controls (393.83,29.13) and (334.83,63.13) .. (334.83,34.13) ;
\draw    (158,63) .. controls (159.83,27.13) and (222.1,62.98) .. (222.1,33.98) ;
\draw    (279,64) .. controls (279.83,29.13) and (222.1,62.98) .. (222.1,33.98) ;

\draw (154,89) node [anchor=north west][inner sep=0.75pt]   [align=left] {0};
\draw (507,90) node [anchor=north west][inner sep=0.75pt]   [align=left] {1};
\draw (319,89) node [anchor=north west][inner sep=0.75pt]  [color={rgb, 255:red, 208; green, 2; blue, 27 }  ,opacity=1 ] [align=left] {r};
\draw (449,98) node [anchor=north west][inner sep=0.75pt]   [align=left] {...};
\draw (264,94) node [anchor=north west][inner sep=0.75pt]   [align=left] {$\displaystyle b_{k+1}$};
\draw (385,94) node [anchor=north west][inner sep=0.75pt]   [align=left] {$\displaystyle b_{k}$};
\draw (329,10.07) node [anchor=north west][inner sep=0.75pt]   [align=left] {$\displaystyle I'_{1}$};
\draw (218,10) node [anchor=north west][inner sep=0.75pt]   [align=left] {$\displaystyle I_{1}$};

\end{tikzpicture}

    \caption{$I_1'$ compared to $I_1$}
    \label{fig:i_1' vs bk+1,bk}
\end{figure}
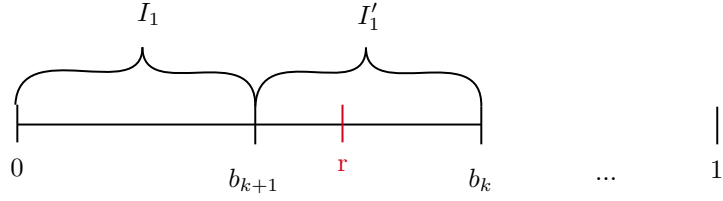

In order to estimate $\sum w_j^h$, we need the following simple lemma.
\begin{lemma}\label{lem: dokladne wyznaczenie alpha}
Consider all sequences $(x_j)_{j=1}^\infty$ such that 
$$
x_j\in [0,+\infty), \  \
\sum_{j=1}^\infty x_j = 1,
\  \ and \  \  x_{j+1} \leq \alpha x_j
$$ 
for some $\alpha \in (0,1)$ and all $j \geq 1$. Let $h\in (0,1)$.  Then the sum 
$$
\sum_{j = 1}^\infty x_j^h
$$ 
is finite and attains its maximum for the sequence 
$$
x_j := (1-\alpha)\cdot \alpha^{j-1}, \ j \in N,
$$ 
and its value is equal to 
$$
S_\alpha:= \frac{(1-\alpha)^h}{1-\alpha^h}.
$$
\end{lemma}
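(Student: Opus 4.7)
The plan is to combine a mass-transfer (swap) argument with a compactness argument, in three pieces: first settle the easy finiteness bound and verify the candidate value; next show that any feasible sequence which is not the claimed geometric one can be strictly improved by a small perturbation; finally ensure the supremum is actually attained so the previous step forces it to be achieved at the geometric sequence.

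Finiteness comes for free: iterating $x_{j+1}\le\alpha x_j$ yields $x_j\le x_1\alpha^{j-1}$, and $x_1\le\sum_i x_i=1$, so $\sum x_j^h\le\sum\alpha^{h(j-1)}=(1-\alpha^h)^{-1}$. A direct substitution confirms that $x_j^{*}:=(1-\alpha)\alpha^{j-1}$ is feasible, sums to $1$, and yields $\sum(x_j^{*})^h=(1-\alpha)^h/(1-\alpha^h)=S_\alpha$. The heart of the argument is the swap step: if $(x_j)$ is feasible and $(x_j)\ne(x_j^{*})$, then, since $(x_j^{*})$ is the only sequence saturating $x_{j+1}=\alpha x_j$ for all $j$ while summing to one, there must exist some $k$ with $x_k>0$ and $x_{k+1}<\alpha x_k$. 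Transferring mass $\varepsilon>0$ from $x_k$ to $x_{k+1}$ preserves feasibility as long as $\varepsilon\le(\alpha x_k-x_{k+1})/(1+\alpha)$, because the modified constraint at index $k$ reduces to this inequality and the neighbouring constraints only improve (we only decrease $x_k$ and increase $x_{k+1}$). The change in the objective is
\[
(x_k-\varepsilon)^h+(x_{k+1}+\varepsilon)^h-x_k^h-x_{k+1}^h=h\varepsilon\bigl(x_{k+1}^{h-1}-x_k^{h-1}\bigr)+o(\varepsilon),
\]
which is strictly positive when $x_{k+1}>0$ because $x_{k+1}<x_k$ and $t\mapsto t^{h-1}$ is strictly decreasing on $(0,\infty)$ (as $h-1<0$). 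When $x_{k+1}=0$ the linear term is replaced by the leading contribution $\varepsilon^h-h x_k^{h-1}\varepsilon$, still strictly positive for small $\varepsilon$ since $h<1$. Hence no non-geometric feasible sequence can be maximal.

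To see that the supremum is actually attained, I would work in the product topology on $[0,1]^{\mathbb N}$, in which the feasible set $K=\{(x_j):x_j\ge 0,\ \sum x_j=1,\ x_{j+1}\le\alpha x_j\}$ is closed (each defining condition is) and hence compact by Tychonoff. The partial sums $F_N(x):=\sum_{j=1}^N x_j^h$ are continuous coordinate-wise, and the uniform tail estimate $\sum_{j>N}x_j^h\le\alpha^{hN}/(1-\alpha^h)$ valid on all of $K$ gives $F_N\to F:=\sum x_j^h$ uniformly, so $F$ is continuous on $K$ and therefore attains its maximum. The swap step then identifies this maximum as $(x_j^{*})$, with value $S_\alpha$. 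The main subtlety I anticipate is the boundary case $x_{k+1}=0$ in the perturbation, where the first-order Taylor expansion of $x\mapsto x^h$ at $0$ breaks down and must be rescued by the $\varepsilon^h$-dominating term; once that is handled, the rest is routine.
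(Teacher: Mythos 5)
Your argument is correct, and it reaches the conclusion by a genuinely different route than the paper. Both proofs use the same existence step: the feasible set is compact in the product topology (the paper phrases this via an equivalent sup-metric on $Y$), and the uniform tail bound $x_j\le\alpha^{j-1}$ makes $\sum_j x_j^h$ a uniform limit of continuous partial sums, hence continuous, so a maximizer exists. Where you diverge is in identifying the maximizer. The paper invokes strict concavity of $\Phi(x)=\sum_j x_j^h$ to get a \emph{unique} maximizer $\hat x$, then runs a renormalization argument: the shifted, rescaled tail $\bigl(\hat x_{j+1}/(1-\hat x_1)\bigr)_{j\ge1}$ is again feasible and again optimal, so by uniqueness it coincides with $\hat x$, forcing $\hat x$ to be geometric with some ratio $\gamma\le\alpha$; monotonicity of $\gamma\mapsto S_\gamma$ then forces $\gamma=\alpha$. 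You instead use a local exchange argument: any non-geometric feasible sequence has a strictly slack constraint $x_{k+1}<\alpha x_k$ (necessarily with $x_k>0$, since $x_k=0$ forces $x_{k+1}=0$ and saturation), and transferring a small mass $\varepsilon$ from $x_k$ to $x_{k+1}$ stays feasible and strictly increases the objective — with the correct separate treatment of the boundary case $x_{k+1}=0$, where the gain $\varepsilon^h$ dominates the loss $O(\varepsilon)$. This is more elementary (no concavity, no uniqueness needed) and makes the extremality mechanism transparent: the functional rewards spreading mass subject to the decay constraint, so every ratio constraint must be saturated at the optimum. The paper's route, by contrast, packages the whole identification into one self-similarity observation.

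One small point to tighten: in the compactness step you assert that $K$ is closed because ``each defining condition is.'' The condition $\sum_j x_j=1$ is \emph{not} closed in the product topology on $[0,1]^{\mathbb N}$ in isolation (coordinatewise limits can lose mass to the tail). It is closed \emph{on the set cut out by the ratio constraints}, precisely because of the uniform bound $x_j\le\alpha^{j-1}$, which forces the tails $\sum_{j>N}x_j\le\alpha^{N}/(1-\alpha)$ to be uniformly small and hence the total sum to pass to the limit. You already use this tail estimate for the uniform convergence of $F_N$, so the fix is one sentence; the paper spells out exactly this point when verifying $z\in Y$.
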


\begin{proof} The proof is elementary and we provide it for the sake of completeness and convenience of the reader. 

Let
\[
\bal
Y :&= \left\{x=(x_1, x_2 \dots) \in \mathbb{R}^\N: x_j \geq 0, \ \sum\limits_{i=1}^\infty x_i = 1, \,  \text{ and } \,  x_{j+1} \leq \alpha x_j  \,  \text{ for all  } \,  j\in \N \right\}
\\
&\subseteq [0,1]^{\N}.
\eal
\]
The set $Y$ is convex. We shall prove that equipped with the distance 
$$
d(x,x') := \sup_{j\ge 1}\big\{ |x_j-x_j'|\big\}.
$$
the set $Y$ is compact. Indeed, it immediately follows from the definition of $Y$ that
\beq
\label{120260115}
x_j\le \alpha ^{j-1}
\eeq
for every $x\in Y$. Consequently,
\beq
\label{120260120_1}
\sum\limits_{i=1}^j x_i\ge 1-(1-\alpha)^{-1}\alpha ^{j}
\eeq
\beq
\label{220260115}
\max_{1\le i\le j}\big\{ |x_j-x_j'|\big\}+2\alpha ^j
\le d(x,x') 
\le \max_{1\le i\le j}\big\{ |x_j-x_j'|\big\}+2\alpha ^j
\eeq
for every $j\in \N$ and all $x, x'\in Y$. 

Now let $\big(x^{(n)}\big)_{n=1}^\infty$ be a sequence of elements of $Y$. Since the space $[0,1]^{\N}$ endowed with the product topology is compact, this sequence contains a subsequence $\big(x^{(n_k)}\big)_{k=1}^\infty$ converging in the product topology. Denote its limit by $z$. This convergence means that
\beq
\label{320260115}
\lim_{k\to\infty}x_j^{(n_k)}=z_j
\eeq
for every $j\in \N$. It then immediately follows that
\beq
\label{420260115}
z_j \geq 0, \  \sum\limits_{i=1}^\infty z_i \le 1,  \   z_{j+1} \leq \alpha z_j \   \text{ and } \  \sum\limits_{i=1}^j z_i\ge 1-(1-\alpha)^{-1}\alpha ^{j}
\eeq
for all $j\in \N$. Consequently, $\sum_{i=1}^\infty z_i \ge 1$, whence furthermore,$\sum\limits_{i=1}^\infty z_i = 1$.
Thus
\beq
\label{120260120}
z\in Y.
\eeq
Now, fix an $\varepsilon>0$. Take then $j\in \N$ so large that
\beq
\label{220260120}
\alpha ^{j}<\varepsilon/4.
\eeq
Next, for every $i=1, 2, \ldots, j$ taken $N_i$ so large that 
$$
\big|x_i^{(n_k)}-z_j\big|<\varepsilon/2
$$
for all $k\ge N_j$. It follows from this, \eqref{220260120}, and \eqref{220260115} that
$$
d\big(x^{(n_k)},z\big)<\varepsilon/2+\varepsilon/2=\varepsilon
$$
for all $k\ge \max\big\{N_i:1\le i\le j\big\}$. Hence,
$$
\lim_{k\to\infty}x^{(n_k)}=z\in Y
$$
in the topology on $Y$ generated by the metric $d$. Thus, the space $(Y,d)$ is compact.

Moreover, it immediately follows from \eqref{220260115} that the topology on $Y$ generated by the metric $d$ and the topology on $Y$ inherited from the product topology on $[0,1]^{\N}$ are the same. Therefore, the function
$$
Y\ni x\mapsto x_j^h\in \mathbb R
$$
is continuous on $(Y,d)$ for every $j\in \N$. Thus the function
$\Phi:Y\rightarrow \mathbb R$, given by the formula 
$$
\Phi(x):= \sum_{j = 1}^\infty x_j^h\in \mathbb R,
$$
is well defined and it is continuous by virtue the Weierstrass $M$ test and \eqref{120260115}.

Moreover, the function $\Phi$ is  strictly concave. Thus, there exists a unique point $\hat{x} \in Y$ at which the function $\Phi$ attains its maximum $S$. We have
\[
S = \Phi\big((\hat{x}_j)_{j=1}^\infty)\big) 
= \hat x_1^h + \Phi\big((\hat{x}_j)_{j=2}^\infty)\big)
\]
Because of maximality, we have
\begin{equation*}
\begin{aligned}
\Phi(\hat{x})
&= \sup \Big\{\Psi\big((y_j)_{j=1}^\infty)\big)\colon  y_i \geq 0, \ y_{i+1} \leq \alpha y_i, \text{ and } \sum_{j = 1}^\infty y_j = 1 - \hat x_1 \  \forall \ i\in\N \Big\}
\\
&= \sup\limits_{y\in Y} \big\{(1-\hat{x}_1)^h \Phi(y)\big\}  
= (1-\hat{x}_1)^h \cdot S.
\end{aligned}
\end{equation*}
In particular, this implies that
\[
\Phi\left (\left (\frac{1}{1-\hat{x}_1} \hat{x}_2, \frac{1}{1-\hat{x}_1} \hat{x}_3, \dots \right) \right) = S.
\]
So, by uniqueness, we conclude that
\[
\hat{x}_{j+1} = \hat{x}_j(1-\hat x_1)
\]
for all $j\in\N$. This implies that 
$$
\hat{x}_j =\hat{x}_1 \cdot \left({1-\hat{x}_1}\right)^{j-1}
$$
for all $j\in\N$. Hence,
$$
\hat{x}_1\in (0,1).
$$
Putting $\gamma:=1-\hat{x}_1$, we can write 
$$
\hat{x}_j = (1-\gamma) \gamma^{j-1}.
$$
Hence, $\gamma \leq \alpha$ and
\[
\sum \limits_{j = 1}^\infty \hat{x}_j^h = \frac{(1-\gamma)^h}{1-\gamma^h} = S_\gamma.
\]
Since $S_\gamma \leq S_\alpha$ and the sequence $(\hat{x_j})_{j = 1}^\infty$ is maximal, we have $\gamma = \alpha$ and 
$$
\hat{x}_j = (1-\alpha)\cdot \alpha^{j-1}
$$
for all $j\in\N$.
\end{proof}

\begin{lemma}\label{lem:est_wn_2}
Let $w_j$, $j\in\N$, be defined as in \eqref{eq:wn}. Then 
$$
\sum \limits_{j = 1}^\infty w_j^h  
\le 2^{1-h} \frac{(1-(1/2))^h}{1-(1/2)^h}
=\frac{2^{1-h}}{2^{h}-1},
$$
for every $h\in (0,1)$.
\end{lemma}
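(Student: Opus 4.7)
The plan is to reduce the estimate to Lemma~\ref{lem: dokladne wyznaczenie alpha} with $\alpha=1/2$ by grouping the terms $w_j$ in consecutive pairs. Set
\[
v_k := w_{2k-1}+w_{2k},\qquad k\ge 1.
\]
Since $\sum_{j\ge 1}w_j = 1$, we have $\sum_{k\ge 1} v_k = 1$, and $v_k\ge 0$.

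The first step is to verify the geometric decay $v_{k+1}\le \tfrac{1}{2}v_k$. By Lemma~\ref{lem:even} applied with the even index $n=2k+2$, we have $w_{2k+2}\le w_{2k+1}$, so $v_{k+1}\le 2w_{2k+1}$. Then by Lemma~\ref{lem:odd} applied with the odd index $n=2k-1$, $w_{2k+1}\le \tfrac14 w_{2k-1}$, and of course $w_{2k-1}\le v_k$. Combining,
\[
v_{k+1}\;\le\; 2w_{2k+1}\;\le\;\tfrac12\, w_{2k-1}\;\le\;\tfrac12\, v_k,
\]
as desired.

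The second step is to apply Lemma~\ref{lem: dokladne wyznaczenie alpha} with $\alpha=1/2$ to the sequence $(v_k)_{k=1}^\infty$, giving
\[
\sum_{k=1}^\infty v_k^{h}\;\le\; S_{1/2}\;=\;\frac{(1/2)^h}{1-(1/2)^h}\;=\;\frac{1}{2^h-1}.
\]

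The third step is to compare $\sum_j w_j^h$ with $\sum_k v_k^h$. Since $h\in(0,1)$, the function $t\mapsto t^h$ is concave, so for non-negative $a,b$ with $a+b=v$ fixed, the sum $a^h+b^h$ is maximized at $a=b=v/2$, yielding $a^h+b^h\le 2^{1-h}v^h$. Applying this to each pair $(w_{2k-1},w_{2k})$ with $a+b=v_k$,
\[
w_{2k-1}^h+w_{2k}^h\;\le\; 2^{1-h}\,v_k^h.
\]

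Summing over $k\ge 1$ and combining with the bound from the second step,
\[
\sum_{j=1}^\infty w_j^h\;=\;\sum_{k=1}^\infty\bigl(w_{2k-1}^h+w_{2k}^h\bigr)\;\le\; 2^{1-h}\sum_{k=1}^\infty v_k^h\;\le\;\frac{2^{1-h}}{2^h-1},
\]
which is the claimed inequality. Degenerate intervals $I_j=\{b\}$ pose no difficulty: they simply contribute $w_j=0$ and are harmless in each of the steps above. The only conceptual point is the pairing trick that turns the mixed recursion ``$w_n\le w_{n-1}$ for $n$ even, $w_{n+2}\le \tfrac14 w_n$ for $n$ odd'' into a clean geometric recursion for the pair-sums $v_k$; once that is in place, Lemma~\ref{lem: dokladne wyznaczenie alpha} and the elementary concavity inequality close the argument.
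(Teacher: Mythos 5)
Your proposal is correct and follows essentially the same route as the paper: pair the weights into $v_k=w_{2k-1}+w_{2k}$, establish $v_{k+1}\le \frac12 v_k$, apply Lemma~\ref{lem: dokladne wyznaczenie alpha} with $\alpha=1/2$, and finish with the concavity bound $w_{2k-1}^h+w_{2k}^h\le 2^{1-h}v_k^h$. The only difference is that you explicitly derive the halving inequality from Lemmas~\ref{lem:even} and~\ref{lem:odd}, a step the paper states without detail.
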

\begin{proof}

We have 
$$w_{2n+1}+w_{2n+2}\le \frac 1 2 (w_{2n-1}+w_{2n})$$
for all $n\ge1$.

Putting now 
$$
x_j:=w_{2j-1}+w_{2j}, \quad  j\in\N,
$$
we see that
$$\sum_{i=1}^\infty x_i=1 \quad\text {and}\quad x_{j+1}\le \frac 1 2 x_j.$$
for all $j\in\N$.
Therefore, the sequence $\left ({x_j}\right )_{j\in\mathbb N}$ satisfies the hypotheses of Lemma~\ref{lem: dokladne wyznaczenie alpha} with $\alpha=\frac 1 2$, and therefore
\[
\sum \limits_{j = 1}^\infty x_j^h  \le \frac{(1-(1/2))^h}{1-(1/2)^h}.
\]
But, since $x_j=w_{2j-1}+w_{2j}$ and the function $\varphi(x)=x^h$ is concave in $(0,\infty)$ $\left(\text{whence } \frac{1}{2}\left(\varphi(a)+\varphi(b)\right )\le \varphi(\frac{a+b}{2})\right)$, we thus get 
$$w_{2j-1}^h+w_{2j}^h\le 2^{1-h}(w_{2j-1}+w_{2j})^h=2^{1-h}x_j^h.$$
Lemma~\ref{lem:est_wn_2} is proved.
\end{proof}





\begin{lemma}\label{prop: 0r ograniczenie z gory} For every $r>0$ the following holds uniformly with respect to $r\in (0,1)$. 
\begin{equation}\label{eq:0r-1}
\begin{aligned}
\frac{m_n([0,r])}{{\rm{diam}}^{h_n}([0,r])}
&\leq  1 + (1-h_n)\ln n +o\big((1-h_n)\ln n\big).
\end{aligned}
\end{equation}
\end{lemma}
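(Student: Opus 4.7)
The plan is to decompose $[0,r]$ according to Lemma~\ref{lem: rozdzial przedzialu 0,r} as $\bigcup_{m\ge 1}I_m$, estimate $m_n(I_m)$ on each piece by pulling it back to a standard interval under the similarity branch that produced it, apply the bounds already proved for intervals of the form $[b_{l+1},b_k]$ (Lemma~\ref{prop: 21}) and $[0,b_k]$ (Lemma~\ref{prop: 0, bk}), and finally sum using the geometric-type bound on $\sum_m w_m^{h_n}$ from Lemma~\ref{lem:est_wn_2}.

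First I would analyze each piece $I_m$. For $m\ge 2$ the interval $I_m$ sits inside $I'_{m-1}$, which is an $(m-1)$th-generation cylinder $T_m([0,1])$ where $T_m=g_{\omega_1}\circ\cdots\circ g_{\omega_{m-1}}$ is an affine similarity. Writing $I_m=T_m(\tilde I_m)$, the conformality of $m_n$ (each $g_j$ is a similarity and $m_n$ is $h_n$-conformal) gives
\[
\frac{m_n(I_m)}{|I_m|^{h_n}}=\frac{m_n(\tilde I_m)}{|\tilde I_m|^{h_n}}.
\]
Since each $g_j$ reverses orientation, $T_m$ is orientation-reversing iff $m-1$ is odd. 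Tracking this shows that $\tilde I_m$ is a union of first-generation intervals starting from 0 when $T_m$ is orientation-preserving (odd $m\ge 3$), giving $\tilde I_m=[0,b_{k}]$ for some $k$; and starting from 1 when $T_m$ is orientation-reversing (even $m$), giving $\tilde I_m=[b_{l+1},b_1]$ for some $l$. For $m=1$ we simply have $\tilde I_1=I_1=[0,b_{k+1}]$.

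Thus for every $m$, $\tilde I_m$ is of one of the two forms treated in Lemmas~\ref{prop: 21} and~\ref{prop: 0, bk}. Those lemmas yield, uniformly in $k,l$ and hence uniformly in $m$,
\[
\frac{m_n(\tilde I_m)}{|\tilde I_m|^{h_n}}\le 1+(1-h_n)\ln n+(1-h_n)\cdot O\!\left(\frac{\ln^2 n}{n}\right),
\]
and since $1-h_n=O(1/n)$ by \eqref{eq:asymp_dim for linear Gauss}, the error is $o((1-h_n)\ln n)$. Consequently, $m_n(I_m)\le |I_m|^{h_n}\bigl(1+(1-h_n)\ln n+o((1-h_n)\ln n)\bigr)$.

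Summing over $m$, using $|I_m|^{h_n}=w_m^{h_n}r^{h_n}$, gives
\[
\frac{m_n([0,r])}{r^{h_n}}\le \bigl(1+(1-h_n)\ln n+o((1-h_n)\ln n)\bigr)\sum_{m\ge 1}w_m^{h_n}.
\]
By Lemma~\ref{lem:est_wn_2}, $\sum_m w_m^{h_n}\le \frac{2^{1-h_n}}{2^{h_n}-1}$, and expanding around $h_n=1$ (writing $h_n=1-\varepsilon_n$ with $\varepsilon_n\to 0$) this equals $1+O(1-h_n)$, which is $o((1-h_n)\ln n)$. Multiplying out yields the desired bound, uniformly in $r\in(0,1]$. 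The only nontrivial points are the bookkeeping of orientations to identify $\tilde I_m$, and the verification that the extra multiplicative factor $\tfrac{2^{1-h_n}}{2^{h_n}-1}$ contributes only a lower-order error relative to $(1-h_n)\ln n$; both are straightforward once phrased as above.
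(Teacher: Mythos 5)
Your proposal is correct and follows essentially the same route as the paper: decompose $[0,r]$ via Lemma~\ref{lem: rozdzial przedzialu 0,r}, reduce each piece $I_m$ by conformality to an interval of the form $[0,b_k]$ or $[b_{l+1},b_k]$ handled by Lemmas~\ref{prop: 0, bk} and~\ref{prop: 21}, and absorb the multiplicative error $\sum_m w_m^{h_n}\le \frac{2^{1-h_n}}{2^{h_n}-1}=1+O(1-h_n)$ via Lemma~\ref{lem:est_wn_2}. The only cosmetic difference is that the paper first works out the two-interval special case before treating the general decomposition, whereas you go directly to the general sum.
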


\begin{proof}


\par In order to make the calculation more transparent, we start with special case when the interval $[0,r]$  can be written in the representation from Lemma~\ref{lem: rozdzial przedzialu 0,r}. as the union of two intervals $I_1$, $I_2$. So, let $[0,r] = [0,b_{k+1}] \cup [b_{k+1},b_{k+1, j}]$. Then
\begin{equation}\label{eq: 22}
\frac{m_n([0,r])}{{\rm{diam}}^{h_n}([0,r])} = \frac{\overbrace{m_n([0,b_{k+1}])}^{A_k} +|b_k - b_{k+1}|^{h_n} \cdot \overbrace{m_n([b_j, 1])}^{B_j}}{{\rm{diam}}^{h_n}([0,r])}.
\end{equation}
First, we shall estimate the expression
\[
\frac{m_n([0,r])}{\underbrace{|b_{k+1}-0|^{h_n}}_{\widetilde{A_k}} + |b_k-b_{k+1}|^{h_n}\underbrace{(1-b_j)^{h_n}}_{\widetilde{B_j}}}
\]
Note that the expression in the denominator is not equal to $({\rm{diam}}([0,r]))^{h_n}$. 
\\We have by steps (1) and (2) the following estimates:
\[
\frac{\frac{A_k}{\widetilde{A_k}}-1}{1-h_n} 
\leq \ln n(1+Dn^{-1}\ln n)
\]
and
\[
\frac{\frac{B_j}{\widetilde{B_j}}-1}{1-h_n}
\leq \ln (1+Dn^{-1}\ln n).
\]
where $D\in (0,+\infty)$ is a constant independent on $n$. So, 
\[  
\bal
A_k + |b_k-&b_{k+1}|^{h_n} B_j \le
\\
&\leq \widetilde{A_k} + (1-h_n)\ln n(1+Dn^{-1}\ln n) \widetilde{A_k}  + 
\\
&\hspace{4mm} + |b_k - b_{k+1}|^{h_n}\big( \widetilde{B_j} + (1-h_n)\ln n(1+Dn^{-1}\ln n) \widetilde{B_j\big)}
\\
&=\widetilde{A_k}\Big(1 + (1-h_n)\ln n\big(1 + Dn^{-1} \ln n\big)\Big)
+ 
\\
&\hspace{4mm} + |b_k - b_{k+1}|^{h_n}\widetilde{B_j}\Big(1 + (1-h_n)\ln n\big(1 + Dn^{-1} \ln n\big)\Big).
\eal
\]
Thus,
\begin{equation}\label{eq: 23}
\frac{A_k +|b_k - b_{k+1}|^{h_n}B_j}{\widetilde{A_k} +|b_k - b_{k+1}|^{h_n}\widetilde{B_j}} 
\leq 1 + (1-h_n)\ln n\big(1 + Dn^{-1} \ln n\big),
\end{equation}
and, consequently,
\[
\frac{\frac{A_k +|b_k - b_{k+1}|^{h_n}B_j}{\widetilde{A_k} +|b_k - b_{k+1}|^{h_n}\widetilde{B_j}} -1}{1-h_n} 
\leq \ln n\big(1 + Dn^{-1} \ln n\big).
\]
Note, however that we need to estimate from above another ratio, namely \eqref{eq: 22}. In order to do that, using \eqref{eq: 23}, we  write
\beq
\label{320260210}
\bal
\frac{A_k + |b_k - b_{k+1}|^{h_n} B_j}{|b_{{k+1},j}- 0 |^{h_n}} 
&= \frac{A_k + |b_k - b_{k+1}|^{h_n} B_j}{\widetilde{A_k} + |b_k - b_{k+1}|^{h_n} \widetilde{B_j}} \cdot \frac{\overbrace{\widetilde{A_k} + |b_k - b_{k+1}|^{h_n} \widetilde{B_j}}^{R_n}}{|b_{k,j}- 0 |^{h_n}} 
\\
&\leq \Big(1 + (1-h_n)\ln n\big(1 + Dn^{-1} \ln n\big) \Big) R_n.
\eal
\eeq
Thus,  we need to estimate the "error term"  $R_n$. A straightforward  estimate (however  insufficient for the general case, which we shall discuss below) is provided in the following  simple claim.

\

Claim~1.
\[
R_n \leq 1 + O(n^{-1}).
\]

\begin{proof} Using \eqref{eq:asymp_dim for linear Gauss}, we get
\[
\bal
R_n &= \frac{|I_1|^{h_n} + |I_2|^{h_n}}{(|I_1|_1 + |I_2|)^{h_n}} = w_1^{h_n} + w_2^{h_n}\leq 2 \cdot \left (\frac{1}{2} \right)^{h_n} = 2^{1-h_n}
\\
&= e^{\ln 2 (1-h_n)} = 1 + (1-h_n)\ln 2 + O((1-h_n)^2)
\\
&= 1 + O(n^{-1}),
\eal
\]
where $w_1 = \frac{|I_1|}{|I_1|+|I_2|}$ and $w_2 = \frac{|I_2|}{|I_1|+|I_2|}$. The proof of Claim~1 is complete.
\end{proof}
Using also \eqref{eq: 22}, \eqref{320260210}, and \eqref{eq:asymp_dim for linear Gauss} again, we get
\bea
\label{120260212}
\bal
\frac{m_n([0,r])}{{\rm{diam}}^{h_n}([0,r])} 
&\leq \Big(1 + (1-h_n)\ln n\big(1 + Dn^{-1} \ln n\big) \Big)\big(1 + O(n^{-1})\big)
\\
&=1 + (1-h_n)\ln n +o\big((1-h_n)\ln n\big).
\eal
\eea
The above calculation might  suggest that passing to the general case, where the interval $[0,r]$ is represented as the infinite union of the subintervals $I_n$, as in Lemma~\ref{lem: rozdzial przedzialu 0,r},  would increase  the factor  $R_n$ which appears in the estimates. However, due to Lemma~\ref{lem:est_wn_2}, the error term $R_n$ is bounded by a constant and better.
Indeed, in the general case, i.e. for the decompositon described in Lemma~\ref{lem: rozdzial przedzialu 0,r}, exactly as in the formula \eqref{120260212}, we obtain
\beq
\label{420260210}
\frac{m_n([0,r])}{\sum\limits_{j=0}^\infty{\rm{diam}}^{h_n}(I_j)} \leq 1 + (1-h_n)\ln n +o\big((1-h_n)\ln n\big)
\eeq
and
\begin{equation}\label{eq: 31}
\frac{m_n([0,r])}{{\rm{diam}}^{h_n}([0,r])} = \frac{m_n([0,r])}{\sum\limits_{j=0}^\infty{\rm{diam}}^{h_n}(I_j)} \cdot R_n,
\end{equation}
where now $R_n$  (the ''error term'') is the infinite sum 
\begin{equation}\label{eq:Rn}
R_n = \sum \limits_{j = 0}^\infty w_j^{h_n}, \quad  w_j = \frac{|I_j|}{|[0,r]|}.
\end{equation}
Using Lemma~\ref{lem:est_wn_2}, we get
$$
R_n\le\frac{2^{1-h_n}}{2^{h_n}-1}.
$$
Consider the function $g:(-\infty,1)\longrightarrow (0,+\infty)$ given by the formula
$$
g(x):= \frac{2^x}{2^{1-x}-1}.
$$
This function is analytic and a direct calculation shows that
$$
g(0)=1 \  \  {\rm and } \  \  g'(0)=3\ln 2.
$$
Therefore,
$$
g(x)=1+O(x).
$$
Hence, using \eqref{eq:asymp_dim for linear Gauss} again, we get
$$
R_n\le \frac{2^{1-h_n}}{2^{h_n}-1}
=1+O(1-h_n)
=1+ O(n^{-1}).
$$
Inserting this to \eqref{eq: 31} and using \eqref{420260210}, we get
$$
\bal
\frac{m_n([0,r])}{{\rm{diam}}^{h_n}([0,r])} 
&\le \big(1 + (1-h_n)\ln n +o\big((1-h_n)\ln n\big)\Big) \cdot R_n
\\
&\le \Big(1 + (1-h_n)\ln n +o\big((1-h_n)\ln n\big)\Big)\big(1+ O(n^{-1})\big)
\\
&=1 + (1-h_n)\ln n +o\big((1-h_n)\ln n\big).
\eal
$$
The proof of Lemma~\ref{prop: 0r ograniczenie z gory} is complete.
\end{proof}

The estimate proved in Lemma~\ref{prop: 0r ograniczenie z gory}  immediately  give the following. 

\begin{lemma}\label{prop:0r}
\[
\limsup\limits_{n \to \infty} \left (\sup\limits_{r \in (0,1)}  \left\{\frac{\frac{m_n([0,r])}{{\rm{diam}}^{h_n}([0,r])}-1}{\left(1-h_n\right) \cdot \ln n} \right\} \right )\leq 1.
\]
\end{lemma}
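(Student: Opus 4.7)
The lemma is essentially an immediate rearrangement of Lemma~\ref{prop: 0r ograniczenie z gory}, which already contains the hard analytical content (the decomposition of $[0,r]$ into the intervals $I_m$, the bound on the error term $R_n$ via Lemma~\ref{lem:est_wn_2}, and the estimates from Steps~1 and~2). The one point to double-check is that the ``$o$'' in Lemma~\ref{prop: 0r ograniczenie z gory} is uniform in $r\in(0,1)$.

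The plan is as follows. First, I would read off from Lemma~\ref{prop: 0r ograniczenie z gory} the inequality
\[
\frac{m_n([0,r])}{\diam^{h_n}([0,r])} - 1 \leq (1-h_n)\ln n + \varepsilon_n(r)\cdot (1-h_n)\ln n,
\]
where $\varepsilon_n(r)\to 0$ as $n\to\infty$ and, crucially, the convergence is uniform in $r\in(0,1)$. Uniformity holds because in the proof of Lemma~\ref{prop: 0r ograniczenie z gory} every error term is controlled by functions of $n$ alone: the bound on the ``main'' factor through Step~1 and Step~2 uses the constant $D$ independent of $r$, and the bound on $R_n$ through Lemma~\ref{lem:est_wn_2} together with the Taylor expansion of $g(x)=2^x/(2^{1-x}-1)$ at $0$ depends only on $1-h_n$.

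Next, assuming $n$ is large enough so that $(1-h_n)\ln n>0$ (which holds eventually by \eqref{eq:asymp_dim for linear Gauss}), I would divide both sides of the displayed inequality by $(1-h_n)\ln n$ to obtain
\[
\frac{\tfrac{m_n([0,r])}{\diam^{h_n}([0,r])}-1}{(1-h_n)\ln n} \leq 1 + \varepsilon_n(r).
\]
Taking the supremum over $r\in(0,1)$ on both sides, the uniformity of $\varepsilon_n(r)\to 0$ gives
\[
\sup_{r\in(0,1)}\frac{\tfrac{m_n([0,r])}{\diam^{h_n}([0,r])}-1}{(1-h_n)\ln n} \leq 1 + \sup_{r\in(0,1)}\varepsilon_n(r) = 1 + o(1).
\]
Finally, letting $n\to\infty$ yields the claimed bound $\limsup\leq 1$.

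I expect no serious obstacle: the only thing that needs care is the verification that all error terms in the proof of Lemma~\ref{prop: 0r ograniczenie z gory} are uniform in $r$, but this can be done simply by tracing through that proof. The substantive estimates were already established there.
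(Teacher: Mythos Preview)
Your proposal is correct and matches the paper's approach exactly: the paper states that Lemma~\ref{prop:0r} follows \emph{immediately} from Lemma~\ref{prop: 0r ograniczenie z gory}, and your argument is precisely the spelled-out version of that one-line deduction. The uniformity in $r$ that you flag is already asserted in the statement of Lemma~\ref{prop: 0r ograniczenie z gory} (``uniformly with respect to $r\in(0,1)$''), so your verification step is appropriate and confirms what the paper takes for granted.
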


{\bf Step 5.} In this step, we provide the estimate for  intervals of the form  $F = [r,1]$, $r\in (0,1)$. We shall prove the following.
\begin{lemma}\label{prop: r 1}
\[
\limsup \limits_{n\to\infty}\left (\sup\limits_{r \in (0,1) } \left\{\frac{\frac{m_n([r,1])}{{\rm{diam}}^{h_n}([r,1])}-1}{(1-h_n)\cdot \ln n}\right\}\right )\leq 1.
\]
\end{lemma}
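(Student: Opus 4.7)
The plan is to split $[r,1]$ into three regimes according to the size of $r$, using the asymmetry that $1=b_1$ is the image of only one branch ($g_1$) while $0$ is an accumulation point of the branches. The two extreme regimes reduce to results already proved; the middle one requires a single-level decomposition combined with concavity.

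When $r \geq b_2 = 1/2$, the interval $[r,1]$ is contained in the cylinder $g_1([0,1])=[1/2,1]$. Since $g_1(x)=1-x/2$ is an orientation-reversing similarity, $[r,1]=g_1([0,s])$ with $s:=2(1-r)\in(0,1]$; the factor $(1/2)^{h_n}$ produced by conformality cancels the same factor in $\diam^{h_n}$, so
\[
\frac{m_n([r,1])}{\diam^{h_n}([r,1])}=\frac{m_n([0,s])}{\diam^{h_n}([0,s])},
\]
and Lemma~\ref{prop:0r} applies directly, uniformly in $r$. When $0 < r \le b_{n+1}$, the inclusion $J_n\subseteq[b_{n+1},1]\subseteq[r,1]$ forces $m_n([r,1])=1$, while $\diam([r,1])\ge n/(n+1)$; hence the ratio is at most $(1+1/n)^{h_n}=1+O(1/n)$, and since $(1-h_n)\ln n=\Theta(\ln n/n)$ by \eqref{eq:asymp_dim for linear Gauss}, division gives $O(1/\ln n)\to 0$.

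The substantive case is $b_{n+1}<r<1/2$. Let $k\in\{2,\dots,n\}$ be the unique integer with $r\in[b_{k+1},b_k)$, and split $[r,1]=[r,b_k]\cup[b_k,1]$. The right piece $[b_k,b_1]$ is exactly of the form treated in Lemma~\ref{prop: 21} (with $l=k-1$ there), giving
\[
m_n([b_k,1])\le(1-b_k)^{h_n}\bigl(1+(1-h_n)\ln n+o((1-h_n)\ln n)\bigr).
\]
The left piece is handled by the same conformal trick as in the first regime: setting $t:=g_k^{-1}(r)$, the equation $g_k(t)=r$ rearranges to the clean identity $t/(k(k+1))=b_k-r$, so conformality $m_n([r,b_k])=(k(k+1))^{-h_n}m_n([0,t])$ combined with Lemma~\ref{prop:0r} applied to $[0,t]$ yields
\[
m_n([r,b_k])\le(b_k-r)^{h_n}\bigl(1+(1-h_n)\ln n+o((1-h_n)\ln n)\bigr).
\]
Writing $\delta_1:=b_k-r$ and $\delta_2:=1-b_k$, so $\diam([r,1])=\delta_1+\delta_2$, and adding the two estimates, the ratio becomes
\[
\frac{\delta_1^{h_n}+\delta_2^{h_n}}{(\delta_1+\delta_2)^{h_n}}\cdot\bigl(1+(1-h_n)\ln n+o((1-h_n)\ln n)\bigr).
\]
The main subtle point, but not really an obstacle, is that by concavity of $x^{h_n}$ the first factor is bounded by $2^{1-h_n}=e^{(1-h_n)\ln 2}=1+O(1/n)$ (using $1-h_n=O(1/n)$), which is absorbed into the $o((1-h_n)\ln n)$ error; unlike in the companion Lemma~\ref{prop: 0r ograniczenie z gory}, there are only two summands, so Lemma~\ref{lem:est_wn_2} is not needed. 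This makes the right-endpoint case marginally easier than its left-endpoint mirror, and delivers the required bound $m_n([r,1])/\diam^{h_n}([r,1])\le 1+(1-h_n)\ln n+o((1-h_n)\ln n)$.
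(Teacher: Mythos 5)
Your proposal is correct and follows essentially the same route as the paper: the case $r\ge 1/2$ is reduced via the similarity $g_1$ to an interval $[0,s]$ and Lemma~\ref{prop:0r}, and the case $b_{n+1}<r<1/2$ is split as $[r,b_k]\cup[b_k,1]$, with the left piece pulled back by $g_k$ to an interval $[0,t]$ (Lemma~\ref{prop: 0r ograniczenie z gory}), the right piece handled by Lemma~\ref{prop: 21}, and the two contributions merged through the concavity bound $(\delta_1^{h_n}+\delta_2^{h_n})/(\delta_1+\delta_2)^{h_n}\le 2^{1-h_n}=1+O(n^{-1})$, exactly as in the paper's Claim~1. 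The only cosmetic difference is that you treat $r\le b_{n+1}$ by a direct computation of the ratio, whereas the paper reduces it to the interval $[b_{n+1},1]$; both are immediate.
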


\begin{proof}
Fix $r\in (0.1)$. We can assume that $\#([r,1]\cap J_n) > 1$, because otherwise the interval $[r,1]$ intersects the limit set in at most one point, and thus, $m_n([r,1]) = 0$. Then, we split our consideration into two cases:
\begin{itemize}
    \item[Case 1.] $r \geq \frac{1}{2}$ 
    
    \hspace{-5mm} or
    
    \item[Case 2.] $r < \frac{1}{2}$.
\end{itemize}
In  Case 1, applying the map $g_1^{-1}$ to the interval $[r,1]$ transforms it to the interval $[0,r']$ where $r' = g_1^{-1}(r)$. In addition, $r' > b_{n+1}$, since $\#([r,1]\cap J_n) > 1$. Hence,
\[
\frac{\frac{m_n([r,1])}{{\rm{diam}}^{h_n}([r,1])}-1}{1-h_n} = \frac{\frac{m_n([0,r'])}{{\rm{diam}}^{h_n}([0,r'])}-1}{1-h_n}.
\]
So, we are done by applying Lemma \ref{prop:0r}.

In Case 2, we can find $l \geq 1$ such that $b_{l+1} \leq r < b_l$. If $l \geq n+1$, then we can set $r: = b_{n+1}$, which does not modify the value of $m_n([r,1])$, while ${\rm{diam}}[b_{n+1},1] \leq {\rm{diam}} [0,r]$. Then 
\[
\frac{m_n([b_{n+1},1])}{{\rm{diam}}([b_{n+1},1])} \geq \frac{m_n([r,1])}{{\rm{diam}}([r,1])}.
\]
So, from now on we can and we do assume that $l \le n$. Notice that then the interval $[r,1]$ can be represented as the union of two intervals
\[
[r,1] = I_1 \cup [b_l,1],
\]
where $I_1 \subset [b_{l+1}, b_l]$. Now, the interval $I_1$ is mapped by the branch $g_l^{-1}(I_1)=[0,r']$ with some $r' >0$ and
\[
\frac{m_n(I_1)}{m_n([b_{l+1}, b_l])} 
= \frac{m_n([0,r'])}{1}
=m_n([0,r']), \  \
\frac{{\rm{diam}}^{h_n}(I_1)}{{\rm{diam}}^{h_n}([b_{l+1}, b_l])} ={\rm{diam}}^{h_n}([0,r']) 
\]
and 
$$
m_n([b_{l+1},b_l]) = \diam^{h_n}([b_{l+1}, b_l]).
$$
Thus, using also \eqref{eq:0r-1} in Lemma~\ref{prop: 0r ograniczenie z gory}, we get
\begin{equation}
\begin{aligned}
&\frac{m_n(I_1)}{\diam^{h_n}(I_1)} 
= \frac{m_n([0,r'])}{\diam^{h_n}([0,r'])} \le
\\ 
& \hspace{5mm}\le 1 + (1-h_n)\ln n +o\big((1-h_n)\ln n\big).
\end{aligned}
\end{equation}
Similarly, Lemma~\ref{prop: 21} gives the same estimate for the interval $[b_l,1]$:
$$  
\frac{m_n([b_{l},1])}{{\rm{diam}}^{h_n}([b_{l},1])}
\le 1 + (1-h_n)\ln n +o\big((1-h_n)\ln n\big).
$$

Now, using the same estimate as in Claim~1 of the proof of Lemma~\ref{prop: 0r ograniczenie z gory}, we obtain that
\begin{equation}\label{eq:r1}
\begin{aligned}
\frac{m_n([r,1])}{\diam^{h_n}([r,1])}
&\le \big(1 + (1-h_n)\ln n +o\big((1-h_n)\ln n\big)\Big) \cdot R_n
\\
&\le \Big(1 + (1-h_n)\ln n +o\big((1-h_n)\ln n\big)\Big)\big(1+ O(n^{-1})\big)
\\
&=1 + (1-h_n)\ln n +o\big((1-h_n)\ln n\big).
\end{aligned}
\end{equation}

\end{proof}

{\bf Step 6.} The final step in the proof of  Theorem~\ref{thm: estimate from above} is to obtain an appropriate estimate for an arbitrary interval $F \subset [0,1]$.
We shall prove 
\begin{lemma}\label{prop: dowolny przedzial}
\[
\limsup \limits_{n\to\infty}\left (\sup \left\{ \frac{\frac{m_n(F)}{({\rm{diam}}(F))^{h_n}}-1}{(1-h_n)\ln n}\right\}\right )\leq 1,
\]
where the supremum is for every $n\in \N$ taken over all closed intervals $F\subset[0,1]$.
\end{lemma}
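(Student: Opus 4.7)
The plan is to extend Steps~1--5 to an arbitrary closed interval $F=[\alpha,\beta]\subset[0,1]$ that intersects $J_n$ at more than one point (otherwise $m_n(F)=0$ and the bound is immediate) by decomposing $F$ into at most three pieces, each of which falls into a shape already controlled in the previous lemmas. If $\alpha=0$ or $\beta=1$ we are done directly by Lemma~\ref{prop:0r} or Lemma~\ref{prop: r 1} respectively, so we may assume $0<\alpha<\beta<1$.

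Let $k,l\in\N$ be determined by $b_{k+1}\le\alpha<b_k$ and $b_{l+1}<\beta\le b_l$; since $b_j=1/j$ is decreasing in $j$, one has $k\ge l$. In the sub-case $k>l$ I would use the decomposition
\[
F=[\alpha,b_k]\cup[b_k,b_{l+1}]\cup[b_{l+1},\beta].
\]
The middle interval $[b_k,b_{l+1}]$ is either degenerate (when $k=l+1$) or of the shape treated by Lemma~\ref{prop: 21}. The left piece $[\alpha,b_k]\subset[b_{k+1},b_k]$ pulls back under the affine inverse branch $g_k^{-1}$ to $[0,g_k^{-1}(\alpha)]\subset[0,1]$; the ratio $m_n/\diam^{h_n}$ is invariant under this pullback by the conformality of $m_n$, and Lemma~\ref{prop:0r} applies to the image. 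Symmetrically, $[b_{l+1},\beta]\subset[b_{l+1},b_l]$ pulls back under $g_l^{-1}$ to an interval of the form $[r',1]$ and is handled by Lemma~\ref{prop: r 1}. In the remaining sub-case $k=l$, the interval $F$ lives in a single first-level cylinder, and I would simply pull back through the affine $g_k^{-1}$, preserving the ratio, and iterate; since each inverse branch expands by a factor $k(k+1)\ge 2$, after finitely many steps either the pulled-back interval straddles two or more first-level cylinders (reducing to the previous sub-case), or it coincides with $[0,1]$ (where the ratio equals $H_n\le 1$ by Proposition~\ref{lem: miara mniejsza niz 1}), or it lies in a cylinder of index $>n$ where $m_n$ vanishes.

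Writing $w_i:=\diam(F_i)/\diam(F)\in[0,1]$ with $\sum_{i=1}^3 w_i=1$, concavity of $x\mapsto x^{h_n}$ (Jensen's inequality) gives
\[
\sum_{i=1}^3\diam^{h_n}(F_i)\le 3^{1-h_n}\diam^{h_n}(F),
\]
and $3^{1-h_n}=1+O(1-h_n)=1+O(n^{-1})$ by \eqref{eq:asymp_dim for linear Gauss}. Combined with the bounds $m_n(F_i)\le \diam^{h_n}(F_i)\bigl(1+(1-h_n)\ln n+o((1-h_n)\ln n)\bigr)$ supplied by Lemmas~\ref{prop: 21}, \ref{prop:0r}, \ref{prop: r 1}, and the fact that $n^{-1}=o((1-h_n)\ln n)$, this yields uniformly in $F$ the desired
\[
\frac{m_n(F)}{\diam^{h_n}(F)}\le 1+(1-h_n)\ln n+o((1-h_n)\ln n).
\]

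The main obstacle I anticipate is the bookkeeping in the iterated pullback of the sub-case $k=l$: one has to verify that the iteration terminates in finitely many steps with one of the three trivial or reduced configurations described above, while genuinely preserving the ratio $m_n/\diam^{h_n}$ at each stage. Beyond this, the argument is essentially an aggregation of Steps~1--5 glued together by the sub-additivity factor $3^{1-h_n}$, which is itself harmless since its two-term analogue $2^{1-h_n}$ already appeared in Claim~1 inside the proof of Lemma~\ref{prop: 0r ograniczenie z gory}.
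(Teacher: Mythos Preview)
Your proof is correct and follows essentially the same strategy as the paper: decompose $F$ into at most three pieces (a left piece in one first-level cylinder pulled back to $[0,r]$, a middle union of full cylinders, and a right piece pulled back to $[r',1]$), apply Lemmas~\ref{prop: 21}, \ref{prop:0r}, \ref{prop: r 1} to the pieces, and glue with the harmless factor $3^{1-h_n}=1+O(n^{-1})$; your iterative pullback for the sub-case $k=l$ is equivalent to the paper's ``take the maximal $M$ with $F\subset I\in\mathcal F^n_M$'' argument. The only cosmetic omission is that the paper first replaces $F$ by $F\cap[b_{n+1},1]$, which spares the case where the left cylinder has index $k>n$ (and hence conformality is not available for the pullback), but this case is harmless since then $m_n$ of the left piece vanishes.
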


\begin{proof}
Fix $n \in \N$. Let $F \subset [0,1]$ be a closed interval. We can assume that $F\subset [b_{n+1}, 1]$ since otherwise we can consider the interval $\widetilde{F} = F \cap [b_{n+1},1]$ which does not change the value of $m_n(F)$, while ${\rm{diam}}(\widetilde{F}) \leq {\rm{diam}} (F)$, so that
\[
\frac{m_n(\widetilde{F})}{{\rm{diam}}(\widetilde{F})} \geq \frac{m_n(F)}{{\rm{diam}}(F)}.
\]
Now consider two cases:

Case~1. $F$ contains some basic interval $[b_{j+1},b_{j}]$ with some $j \leq n $

\hspace{1cm} or 

Case ~2. $F$ does not contain any interval of the form $[b_{j+1},b_{j}]$.
   
First, we will focus on Case~1. Let $[b_l, b_k]$ $k < l \leq n+1$, be the union of all basic intervals which are contained in $F$. Then $F$ can be represented as a union of three intervals:
\[
F = I_1 \cup [b_l, b_k] \cup I_2,
\]
where $I_1\subset [b_{l+1}, b_l]$ and $I_2\subset [b_k, b_{k-1}]$.

We get from the estimate \eqref{eq: 15} in  Lemma~\ref{prop: 21}, that
$$\frac{m_n([b_{l+1},b_k])}{{\rm{diam}}^{h_n}([b_{l+1}, b_k])}
\le 1 + (1-h_n)\ln n +o\big((1-h_n)\ln n\big).
$$

Since $g_l^{-1}(I_1)=[0,r]$ with some $r >0$, we get by Proposition~\ref{prop:0r}, that 
\begin{equation}
\frac{m_n(I_1)}{\diam^{h_n}(I_1)} =
\frac{m_n([0,r])}{\diam^{h_n}([0,r])}
\le 1 + (1-h_n)\ln n +o\big((1-h_n)\ln n\big).
\end{equation}

Similarly, $g_k^{-1}(I_2)=[r',1]$ with some $r'\in [0,1]$. So,
\[
\frac{m_n(I_2)}{\diam ^{h_n}(I_2)} = \frac{m_n([r', 1])}{\diam^{h_n} ([r', 1])}.
\]
Thus, \eqref{eq:r1} holds with  $[r',1]$ replaced by $I_2$ by virtue of Proposition \ref{prop: r 1}.

Finally, we repeat almost the same calculation as in in the proof of formula \eqref{eq:r1}.
The only difference now is that in the estimate of the "error term"  there are now three summands, so that, using Claim~1 from the proof of Lemma~\ref{prop: 0r ograniczenie z gory} for three summands, we get that

\begin{equation}\label{eq:ratio_weaker}
\begin{aligned}
\frac{m_n(F)}{\diam^{h_n}(F)}
&\le \Big(1 + (1-h_n)\ln n +o\big((1-h_n)\ln n\big)\Big)\big(1+ O(n^{-1})\big)
\\
&=1 + (1-h_n)\ln n +o\big((1-h_n)\ln n\big).
\end{aligned}
\end{equation}


Now, we move on to Case~2. The interval $F$ does not contain any interval of the form $[b_l, b_{k}]$. Then, again there are two subcases

Case~2a. either $F \subset [b_{k+1}, b_{k}]$ for some $k \leq n$ 

\hspace{1cm} or 

Case~2b. $F = I_1 \cup I_2 $ where $I_1 \subsetneqq [b_k, b_{k-1}]$ and $I_2 \subsetneqq [b_{k-1}, b_{k-2}]$. \label{subcase: subcase 2b}

In Case~2b the same way of estimate applies as in Case~1. The only difference is that now, there are two summands instead of three, so we obtain also the estimate:
$$
\frac{m_n(F)}{\diam^{h_n} (F)}
\le 1 + (1-h_n)\ln n +o\big((1-h_n)\ln n\big).
$$


In case Case~2a we proceed as follows. Since
 $F \subset [b_{k+1}, b_{k}] \in \mathcal{F}^n_1$ (intervals in the first generation of the construction of $J_n$), we can meaningfully define the number 
\[
M := \max \big \{ m \geq 1: F \subset  I \text{ for some } I \in \mathcal F^n_m \big \}.
\]
Then $F=g_\omega(F')$, where $\omega\in \{1, 2,\ldots, n\}^M$ and $F'\subset [0,1]$ is an interval such that $b_k\in {\rm int}(F')$
for some $k\le n$. 
Now, either $F'$ falls either into Case~1 or into Case~2b. Since the ratio $\frac{m_n(F)}{\diam^{h_n}(F)}$ does not change after passing from $F$ to $F'$, the estimate \eqref{eq:ratio_weaker} applies to $F$ as well. The proof of Lemma~\ref{prop: dowolny przedzial} is complete.
\end{proof}

Theorem~\ref{thm: estimate from above} now follows immediately from Lemma~\ref{prop: dowolny przedzial} and the formula \eqref{eq: gestosc liniowych}.

\section{Asymptotics of Hausdorff measure. Final conclusion.}
Here we formulate and prove the main result of Part 1 of our paper.

\begin{theorem}[Exact asymptotics of Hausdorff measure] \label{thm: wszystko razem}

Recall that $G$ is the linear analogue of  Gauss map. For every $n\in\N$ consider the iterating function system $G_n$ consisting of $n$ initial maps $g_1,\dots g_n$, and its limit set $J_n$,
i.e. the  set consisting of all points $x\in [0,1]$ such that the trajectory ${\{G^n(x)\}}_{n\in\mathbb N}$ never enters the interval $\left[0,\frac{1}{n+1}\right]$.

Recall that $h_n$ is the Hausdorff dimension of the limit set $J_n$ and $H_n$ is 

the Hausdorff measure of $J_n$ evaluated at its Hausdorff dimension, i.e. $H_n=H_{h_n}(J_n)$. Then

\[
\lim \limits_{n \to \infty } \frac{1-H_n}{1-h_n} \cdot \frac{1}{\ln n} = 1.
\]
Thus, 
\[
\lim \limits_{n \to \infty} \frac{n\cdot(1-H_n)}{\ln n}  = \frac{1}{\chi},
\]
where, we recall, $\chi$ is the Lyapunov exponent of the system $G$ with respect to the Lebesgue measure, i.e.
$$
\chi=\int_0^1\log|G'(x)|dx=\sum_{n=1}^\infty   \frac{1}{n(n+1)}\log (n(n+1)).
$$
\end{theorem}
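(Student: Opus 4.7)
The theorem is the capstone of Part 1 and all the substantive work has already been done in the preceding sections; the plan is therefore to assemble the two one-sided asymptotic bounds together with the known dimension asymptotics.

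First I would establish the first equality
\[
\lim_{n\to\infty} \frac{1-H_n}{1-h_n}\cdot\frac{1}{\ln n} = 1
\]
by combining Theorem~\ref{thm: estimate from below} and Theorem~\ref{thm: estimate from above}. The former provides the lower estimate
\[
\liminf_{n\to\infty}\frac{1-H_n}{(1-h_n)\ln n}\ge 1,
\]
obtained by inserting the specific partition $\mathcal P_{k,l}$ from Lemma~\ref{cor:large_entropy} into the supremum characterization \eqref{eq: gestosc liniowych} and invoking Lemma~\ref{l120260127}. The latter, Theorem~\ref{thm: estimate from above}, supplies the matching upper bound
\[
\limsup_{n\to\infty}\frac{1-H_n}{(1-h_n)\ln n}\le 1,
\]
by performing a case analysis over arbitrary closed intervals $F\subset[0,1]$, reducing everything via the decomposition of Lemma~\ref{lem: rozdzial przedzialu 0,r} to the basic intervals $[b_{l+1},b_k]$ already controlled in Lemma~\ref{prop: 21}, and bounding the ''error term'' $R_n$ of \eqref{eq:Rn} through the geometric decay estimates of Lemmas~\ref{lem:even} and~\ref{lem:odd} together with the concavity argument in Lemma~\ref{lem: dokladne wyznaczenie alpha}. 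The two bounds together give the limit and equal $1$.

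For the second equality I would simply factor
\[
\frac{n(1-H_n)}{\ln n}
= \underbrace{\frac{1-H_n}{(1-h_n)\ln n}}_{\to\, 1}\cdot\; \underbrace{n(1-h_n)}_{\to\, 1/\chi},
\]
where the first factor tends to $1$ by the first part of the theorem just proved, and the second factor tends to $1/\chi$ by the linear analogue of Hensley's asymptotic formula \eqref{eq:asymp_dim for linear Gauss} (proved in \cite{dfsu}). Multiplying the two limits yields $1/\chi$.

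Since all the heavy lifting was completed in Sections~\ref{sec:linear_below} and~\ref{sec:linear_above}, there is no genuine obstacle at this stage; the only thing that needs to be done carefully is to record that both limits in the product above exist, which justifies passing to the product in the limit. In particular, no additional regularity of $H_n$ or $h_n$ is required, and the identification of $\chi$ with the Lyapunov exponent $\sum_{n\ge 1}\frac{\log(n(n+1))}{n(n+1)}$ is part of the statement of \eqref{eq:asymp_dim for linear Gauss}.
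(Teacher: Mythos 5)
Your proposal is correct and follows exactly the paper's own argument: the first limit is obtained by combining Theorem~\ref{thm: estimate from below} with Theorem~\ref{thm: estimate from above}, and the second by factoring $\frac{n(1-H_n)}{\ln n}$ as the product of $\frac{1-H_n}{(1-h_n)\ln n}$ and $n(1-h_n)$ and invoking \eqref{eq:asymp_dim for linear Gauss}. Nothing further is needed.
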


\begin{proof}
Theorem~\ref{thm: estimate from below} and Theorem~\ref{thm: estimate from above} give the first formula.

The second formula is a straightforward conclusion from the first one and the fact that Hausdorff dimension $h_n$ has the asymptotics described in  \eqref{eq:asymp_dim for linear Gauss}. Indeed:
\[
\lim \limits_{n \to \infty} \frac{n(1-H_n)}{\ln n} 
= \lim \limits_{n \to \infty} \frac{n(1-H_n) \cdot (1-h_n)}{(1-h_n) \ln n} 
= \frac{1}{\chi}.
\]
\end{proof}

\part{Asymptotics of Hausdorff measure for the non-linear Gauss map: Lower bound}\label{part:2}


In this Part 2, we deal with estimates  of the Hausdorff measure for the original nonlinear Gauss map.
This means that now our iterated function system is formed by the continuous/real analytic inverse branches of the Gauss map 
map $F:(0,1]\to [0,1]$,  that is piecewise defined by the following 
infinite collection of maps:
$$
f_k:\left [\frac{1}{k+1},\frac{1}{k}\right ]\longrightarrow [0,1],
$$
where 
$$
f_k(x):=\frac{1}{x}-\left [\frac 1 x\right ]
$$
for every $k\in \N$.

The inverse maps $g_k=f_k^{-1}\colon [0,1]\rightarrow [0,1]$, $k\in\N$, forming the iterated function system $G$ are given by the formulas
$$
g_k(x)=\frac {1}{k+x}.
$$

As in previous sections, we consider the limit  set $J_n$ generated by $n$ initial maps of the collection $(g_k)_{k\in\mathbb N}$:
\[
J_n = \bigcap\limits_{k=1}^\infty \bigcup_{\omega\in \{1, 2,\ldots, n\}^k}g_{\omega}([0,1]).
\]

But now, the set $J_n$ has a straightforward and important interpretation in terms of continued fraction expansion: $J_n$  is exactly the set of such points $x\in [0,1]\setminus\mathbb Q$ for which the items in continued fraction expansion are bounded by $n$.

We keep notation from previous sections, i.e. we denote by $h_n$ Hausdorff dimension of the set $J_n$ and,
as in the linear case, 
we also put
 $$H_n:=H_{h_n}(J_n).$$  Let us recall that
\beq\label{220260226}
0<H_n<\infty,
\eeq
and as in previous sections, we denote by $m_n$ the normalized measure ${H_{h_n}}|_{J_n}$.

\section{ Hausdorff measure for finite truncations of the Gauss system: The upper bound by $1$}\label{sec:non_linear_est_above}
We now consider the original Gauss map $G$.  
We shall start with the following observation, analogous to Proposition~\ref{lem: miara mniejsza niz 1}.
The proof, however, is  not as straightforward as the proof of Proposition~\ref{lem: miara mniejsza niz 1}, because now the maps we are dealing with are not affine. We first need some distortion preparations.

Let $I$ be an interval in $\mathbb C$. We denote by $\mathbb D_I$ the disc with diameter $I$. We record the following easy observation.

\begin{lemma}\label{lem: conformal_extension}
There exists $\xi>0$ such that each transformation $g_k$, $k\in\N$, maps the disk
$$\mathbb D _{[-\xi,1+\xi]}$$ conformally into itself.
Therefore, each map $g_\omega$ maps 
    the disc 
$$\mathbb D _{[-\xi,1+\xi]}$$ conformally into itself.
\end{lemma}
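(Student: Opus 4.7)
The plan is to exploit the Möbius-transformation structure of each $g_k(z)=1/(z+k)$. Each $g_k$ is conformal on $\widehat{\mathbb C}\setminus\{-k\}$, and for $\xi<1$ the pole $-k$ lies outside $\mathbb D_{[-\xi,1+\xi]}$. Since Möbius transformations send generalized discs to generalized discs, the image $g_k(\mathbb D_{[-\xi,1+\xi]})$ is again a genuine disc, and since $g_k$ preserves $\mathbb R$, this image disc is symmetric about the real axis and hence determined by its real diameter. Because $g_k$ is decreasing on the reals, the image diameter is
\[
[g_k(1+\xi),g_k(-\xi)] = \left[\frac{1}{k+1+\xi},\,\frac{1}{k-\xi}\right].
\]
Therefore the desired inclusion $g_k(\mathbb D_{[-\xi,1+\xi]})\subseteq\mathbb D_{[-\xi,1+\xi]}$ reduces to the single scalar inequality $1/(k-\xi)\le 1+\xi$, i.e., $(k-\xi)(1+\xi)\ge 1$.

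For $k\ge 2$, expanding gives $(k-\xi)(1+\xi)=k+(k-1)\xi-\xi^2\ge 1+\xi-\xi^2$, which exceeds $1$ for small $\xi>0$ and more so for larger $k$; those maps are strongly contracting with image-diameter shrinkage $\asymp 1/k^2$, so the inclusion is robust. The main obstacle is the borderline case $k=1$, where the inequality degenerates to $1-\xi^2\ge 1$: a disc literally centered at $\tfrac12$ is not quite invariant under $g_1$, because the attracting fixed point of $g_1$ is $p=(\sqrt 5-1)/2\ne\tfrac12$. One overcomes this by selecting the disc from the $g_1$-invariant pencil of Apollonius circles $\{z:|z-p|/|z-q|=c\}$ determined by the two real fixed points $p$ and $q=-(\sqrt 5+1)/2$ of $g_1$; along this pencil $g_1$ contracts monotonically toward $p$. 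Choosing $c$ so that the corresponding disc still contains $[0,1]$ in its real diameter, and reading off its diameter endpoints as $-\xi$ and $1+\xi$, produces a disc for which $g_1$ strictly contracts inside itself. The computation for $k\ge 2$ survives the resulting $O(\xi)$ shift of center and radius because of the much stronger $1/k^2$-contraction.

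The second assertion, about $g_\omega=g_{\omega_1}\circ\cdots\circ g_{\omega_m}$, then follows at once by induction on $m$: compositions of conformal maps each sending $\mathbb D_{[-\xi,1+\xi]}$ into itself inherit the property.
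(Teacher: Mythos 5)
Your reduction is the right one and your computation for $k\ge 2$ is correct: each $g_k$ is a M\"obius map with real coefficients whose pole $-k$ lies outside $\mathbb D_{[-\xi,1+\xi]}$, so the image of that disc is again a round disc symmetric about $\mathbb R$ with real diameter $[g_k(1+\xi),g_k(-\xi)]$, and for two discs with collinear diameters containment of the diameters is equivalent to containment of the discs. The paper offers no proof to compare against (the lemma is merely recorded as an easy observation), and what your analysis actually uncovers is that the statement, taken literally, fails at $k=1$: $g_1(-\xi)=\frac{1}{1-\xi}>1+\xi$ for every $\xi\in(0,1)$ (and for $\xi\ge 1$ the pole $-1$ is no longer outside the disc), so no disc of the symmetric form $\mathbb D_{[-\xi,1+\xi]}=\mathbb D(\frac12,\frac12+\xi)$ is mapped into itself by $g_1$. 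This is not a borderline case that a cleverer choice of $\xi$ can rescue.

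Consequently your Apollonius repair cannot deliver the disc the lemma asks for. The pencil $\{z:|z-p|<c\,|z-q|\}$ with $p=(\sqrt5-1)/2$, $q=-(\sqrt5+1)/2$ does consist of discs that $g_1$ maps strictly into themselves, but the member passing through $0$ also passes through $2$ (its real diameter is exactly $[0,2]$), and every larger member has diameter $[a,b]$ with $-\frac12<a<0$ and $b>2$; the right margin $b-1$ always exceeds $1$ while the left margin $-a$ stays below $\frac12$, so the endpoints can never be written as $-\xi$ and $1+\xi$ with one and the same $\xi$. The honest conclusion is that the lemma should be restated with an asymmetric disc, for instance $\mathbb D_{[-1/2,\,5/2]}=\mathbb D(1,\frac32)$: there the pole satisfies $|-k-1|=k+1\ge 2>\frac32$, and the image diameter is $\left[\frac{1}{k+5/2},\frac{1}{k-1/2}\right]\subset\left[-\frac12,\frac52\right]$ for every $k\ge1$, so your diameter argument and the induction for $g_\omega$ go through verbatim. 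Since the only use made of this lemma later in the paper is to provide one fixed round disc containing $[0,1]$ in its interior on which all the $g_\omega$ are univalent (for the Koebe distortion estimates), this corrected version suffices for everything that follows, but both the statement and your $k=1$ step need to be amended accordingly.
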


Using the standard Koebe distortion theorem to the maps $g_\omega$ we can  estimate the distortion of $g_\omega$ on the interval $g_n^2(\Delta_n)$:

\begin{lemma}[Distortion estimate]\label{lem:distortion gomega}
There is a constant $C>0$ independent of $n$ such that for every map $g_\omega$ and all $z,w\in g_n^2(\Delta_n)$ we have that

$$
\frac{|g_\omega'(z)|}{|g_\omega'(w)|}\le \left (1+\frac{C}{n^4}\right ).
$$
 \end{lemma}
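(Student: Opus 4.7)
The plan is to combine Lemma~\ref{lem: conformal_extension} with the classical Koebe distortion theorem, leveraging the fact that $g_n^2(\Delta_n)$ is a very small set and $g_\omega$ is univalent on a fixed disk independent of $\omega$.

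First I would establish a diameter bound $\diam(g_n^2(\Delta_n))=O(n^{-4})$. This is a routine computation: $|\Delta_n|=1/(n(n+1))=O(n^{-2})$, while $|g_n'|$ is bounded by $n^{-2}$ on $[0,1]$, so two applications of $g_n$ yield $\diam(g_n^2(\Delta_n))=O(n^{-6})$, which is more than enough for the lemma.

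Next I would apply Koebe's distortion theorem. By Lemma~\ref{lem: conformal_extension}, every $g_\omega$ is univalent on the fixed disk $\mathbb D_{[-\xi,1+\xi]}$, regardless of the length or entries of $\omega$. Picking any $w_0\in g_n^2(\Delta_n)\subset[0,1]$, the distance from $w_0$ to the boundary of $\mathbb D_{[-\xi,1+\xi]}$ is bounded below by some universal $\rho=\rho(\xi)>0$, so $g_\omega$ is univalent on the concentric disk $D(w_0,\rho)$. The standard Koebe estimate --- for $f$ univalent on $D(w_0,R)$ with $|z-w_0|\le r<R$ one has $f'(z)/f'(w_0)=1+O(r/R)$ --- then gives, for any $z,w\in g_n^2(\Delta_n)$,
$$\frac{|g_\omega'(z)|}{|g_\omega'(w)|}\le 1+\frac{C_2}{\rho}\diam(g_n^2(\Delta_n))+O\!\left(\rho^{-2}\diam(g_n^2(\Delta_n))^2\right)\le 1+\frac{C}{n^4},$$
with $C$ independent of $n$ and $\omega$.

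The only real subtlety is to ensure that the Koebe constants are uniform in $\omega$; they would blow up if we had to shrink the univalent domain for longer words. But Lemma~\ref{lem: conformal_extension} precisely provides a single disk on which $g_\omega$ is univalent for every finite word $\omega$, so the resulting Koebe constants depend only on $\xi$ and not on $\omega$ or $n$. I do not expect this step to present any real obstacle; once the diameter bound is in place, the lemma is essentially a one-line invocation of Koebe distortion on a uniformly large disk around any point of $[0,1]$.
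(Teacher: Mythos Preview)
Your proposal is correct and follows essentially the same approach as the paper: use Lemma~\ref{lem: conformal_extension} to get a fixed disk of univalence for every $g_\omega$, then apply Koebe distortion on that disk together with the smallness of $g_n^2(\Delta_n)$. The paper's proof is a terse two-line version of exactly this argument, simply quoting that $|z-w|<n^{-4}$ for $z,w\in g_n^2(\Delta_n)$ (your $O(n^{-6})$ bound is actually sharper but either suffices).
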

\begin{proof}
Choose an arbitrary point $z\in g_n^2(\Delta_n)$. Use the Koebe distortion estimate for the conformal map 
$$
g_\omega:\mathbb D_{[-\xi, 1+\xi]}\longrightarrow \mathbb D_{[-\xi,1+\xi]}
$$ 
and apply the fact that if $w$ is another point in $g_n^2(\Delta_n)$, then $|z-w|<\frac{1}{n^4}$. 
\end{proof}

The first core result in this section is the following.

\begin{prop}\label{prop:gauss_hausdorff_above}
\[H_{h_n}(J_n)\le 1.\]
\end{prop}
for all sufficiently large $n\in \mathbb N$.

\begin{proof}
For every $n\in\N$ denote:
\beq
\label{120260129}
D_n:=\left[\frac{1}{n+1},1\right].
\eeq 
Then $m_n(D_n)=1$, whence
$$\frac{\diam^{h_n}(D_n)}{m_n(D_n)}<1.$$

Applying the map $g_n$ to the interval $D_n$ we see that

$$\diam(g_n(D_n))=\frac{1}{\frac{1}{n+1}+n}-\frac{1}{n+1}=\frac{n+1}{1+n(n+1)}-\frac{1}{n+1}=\frac{n}{(n+1)(1+n+n^2)},$$
and
$$m_n(g_n(D_n)=\int_{D_n}\frac{1}{(x+n)^{2h_n}} d m_n(x)\ge\min_{x\in D_n} \left \{\frac{1}{(x+n)^{2h_n}}\right\}\ge \left (\frac{1}{1+n}\right )^{2h_n}.
$$
Thus,

\begin{equation}\label{eq:est_firstgn}
\bal
\frac{\diam^{h_n}(g_n(D_n))}{m_n(g_n(D_n))}
&\le \left (\frac{n(n+1)^2}{(n+1)(n^2+n+1)}\right )^{h_n}=\left (\frac{n^2+n}{n^2+n+1}\right )^{h_n}
\\
&=  \left (1-\frac{1}{n^2+n+1}\right )^{h_n}< 1.
\eal
\end{equation}
The distortion of the map $g_n$, i.e. the supremum of the ratio $\frac{|g_n'(y)|}{|g_n'(x)|}$, on the interval $\left[\frac{1}{n+1},\frac{1}{n}\right]=g_n([0,1])$ can be easily calculated. Indeed,
since $|(g_n)'(x)|=\frac{1}{(n+x)^2}$, we see that 
$$
\sup\left\{\frac {|g_n'(x)|}{|g_n'(y)|}:x,y\in\left[\frac{1}{n+1}, \frac{1}{n}\right]\right\}
\le \left (\frac{n+\frac 1 n}{n+\frac {1}{n+1}}\right )^2 =\left (1+\frac{1}{n^3+n^2+n}\right )^2.
$$
Applying the map $g_n$ to the interval $g_n(D_n)$,  and using the above estimate of distortion,  together with \eqref{eq:est_firstgn}, we obtain that

\begin{equation}\label{eq: secondgn}
\frac{\diam^{h_n}(g_n^2(D_n))}{m_n(g_n^2(D_n))}\le \left (1-\frac{1}{n^2+n+1}\right )^{h_n}\cdot \left (1+\frac{1}{n^3+n^2+n}\right )^{4h_n}<\left(1-\frac{1}{2n^2}\right)^{h_n}
\end{equation}
if $n$ is sufficiently large.

Next, we shall consider the compositions $g_\omega\circ g_n$ where $\omega=(\omega_1, \omega_2,\dots \omega_k)$ is an arbitrary finite word with entries bounded above by $n$. Combining the estimate \eqref{eq: secondgn} together with Lemma~\ref{lem:distortion gomega} we obtain that

\begin{equation}\label{eq:gauss_comp_above}
\frac{\diam^{h_n}\big(g_\omega \circ g_n^2(D_n)\big)}
{m_n(g_\omega\circ  g_n(D_n))}
\le \left (1-\frac{1}{2n^2}\right )^{h_n}
\cdot \left (1+\frac{C}{n^4}\right )^{2h_n}\le \left (1-\frac{1}{3n^2}\right )^{h_n}
\end{equation}
for all $n\in\N$ sufficiently large.

In order to continue the proof of Proposition~\ref{prop:gauss_hausdorff_above}, we shall use  again Corollary~\ref{cor:fundamental}.

It is known (see, e.g. \cite{MU}) that for every $n$ there exists a Borel  invariant probability measure $\mu_n$, equivalent to $m_n$, and ergodic.
Invariance means that for every Borel  set $A\subset [0,1]$ we have
$$\mu_n(A)=\sum_{j=1}^n\mu_n(g_j(A)),$$
while ergodicity says that for every Borel set $A\subset[0,1]$ such that $\bigcup_{j=1}^n g_j(A)=A$, we have either $\mu_n(A)=0$ or $\mu_n(A)=1$.

Let $\sigma:\mathbb N^{\mathbb N}\to\mathbb N^{\mathbb N}$ be the shift map, i.e. $\sigma(\omega)$ is
defined as a sequence such that  for every $n\in\mathbb N$ its $n$th
coordinate is equal to $\omega_{n+1}$. We denote by $\pi(\omega)$ the unique
element of $[0,1]$ whose continued fraction representation is equal to
$\omega$. So, we have defined an injective  map
$\pi:\mathbb N^{\mathbb N}\to[0,1]$. Its restriction to $\{1,\dots n\}^{\mathbb N}$ (equipped with product of discrete topologies on each $\{1,\dots n\}$) is then a homeomorphism  onto $J_n$.

Denote by $\tilde m_n$ the image of $m_n$
under the inverse of $\pi|_{\{1,\dots n\}^{\mathbb N}}$, and by $\tilde\mu_n$-- the image of $\mu_n$ under the inverse of $\pi|_{\{1,\dots n\}^{\mathbb N}}$.
Then $\tilde \mu_n$ is invariant and ergodic with respect to the shift map $\sigma$.
Now for every $\omega\in\{1,2,\dots ,n\}^{\mathbb N}$
let 
$$
Z_n(\omega):=\{j\ge 1:\omega_j=n \  \text{and}  \  \omega_{j+1}=n\}.
$$
Because of Birkhoff's Ergodic Theorem and  ergodicity of the measure
$\tilde\mu_n$, 
there exists a
Borel set $\Gamma_n\subset \{1,\dots n\}^{\mathbb N}$ of full measure $\tilde m_n$ (and full measure $\tilde\mu_n$)
such that for every $\omega\in\Gamma_n$ the set 
$Z_n(\omega)$
is infinite.

Pick such $\omega\in \Gamma_n$. Denote 
$$z:=\pi(\omega).$$
For each $j\in Z_n(\omega)$ we write
the initial segment of length $j$ of the infinite sequence $\omega$:
$$\omega_1, \omega_2,\dots, \omega_{j-1}, n, n.$$
Denote by $\omega|_{j-1}$ the finite word  
$$
\omega_1,\dots, \omega_{j-1}.
$$
Put $$F_j:=g_{\omega|_{j-1}}\circ g_n^2(D_n).$$

Then $F_j$ is a closed interval containing the point $z$. Using the estimate \eqref{eq:gauss_comp_above} we see that 

\begin{equation}\label{eq:est_density}
\frac{\diam^{h_n}(F_j)}
{m_n(F_j)}
\le \left (1-\frac{1}{3n^2}\right )^{h_n}.
\end{equation}
Since the set $Z_n(\omega)$ is infinite, one can find intervals $F_j$ containing the point $z$ with arbitrarily small diameters and satisfying the inequality \eqref{eq:est_density}. Together with density theorem for Hausdorff measure, i.e. Corollary~\ref{cor:fundamental}, this completes the proof of Proposition~\ref{prop:gauss_hausdorff_above}.

\end{proof}

\begin{rem}
The proof of Proposition~\ref{prop:gauss_hausdorff_above} gives slightly more than $H_n<1$; namely, the final estimate gives that

$$H_n\le\left (1-\frac{1}{3n^2}\right )^{h_n}.$$
\end{rem}
\section{Asymptotics of Hausdorff measure: Lower bound}\label{sec:non_linear_asymp_below}

For the purpose of this section, we introduce the following notation.
Given $\varepsilon\in (0,1)$ and $n\in \N$, we denote
$$
F_n(\varepsilon):=\left[b_{n+1},b_{[n-n^{1-\varepsilon}]+1}\right].
$$

\subsection{Estimates at large scale} 

We start with the following.

\begin{lemma}
\label{l320260129}
If $\varepsilon\in (0,1)$, then
\[
\varliminf\limits_{n \to \infty} \frac{1}{(1-h_n)\ln{n}}
\left(\frac{m_n(F_n(\varepsilon))}  {{\rm{diam}}^{h_n}(F_n(\varepsilon))}-1\right)
\ge 1-\varepsilon.
\]
\end{lemma}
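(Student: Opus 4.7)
My plan is to decompose $F_n(\varepsilon)$ as the union of the first-level cylinders $\Delta_j=g_j([0,1])=[b_{j+1},b_j]$ for $k\le j\le n$, where $k:=[n-n^{1-\varepsilon}]+1$, and to reduce the computation to the linear-case estimate of Lemma~\ref{l120260127}. The crucial observation is that the Lebesgue sizes $|\Delta_j|=1/(j(j+1))$, and therefore the weights $w_j:=|\Delta_j|/\diam(F_n(\varepsilon))$, are \emph{identical} to those in the piecewise linear setting; only the passage from $|\Delta_j|^{h_n}$ to $m_n(\Delta_j)$ picks up a nonlinear distortion factor.

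First I would use $h_n$-conformality of $m_n$ to write $m_n(\Delta_j)=\int_{[0,1]}|g_j'|^{h_n}\,dm_n$. Since $|g_j'(y)|=(y+j)^{-2}$, the ratio $|g_j'(y)|/|g_j'(z)|=((z+j)/(y+j))^2$ lies in $[(j/(j+1))^2,((j+1)/j)^2]$ for $y,z\in[0,1]$. Picking by the mean value theorem some $\eta_j\in[0,1]$ with $|g_j'(\eta_j)|=|\Delta_j|$, this gives
\[
m_n(\Delta_j)=R_j\,|\Delta_j|^{h_n},\qquad R_j=1+O(1/j),
\]
uniformly in $n$ and $j$. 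Since $j\ge k\ge n/2$ for large $n$, in fact $R_j=1+O(1/n)$ throughout the range $k\le j\le n$. Summing and dividing by $\diam^{h_n}(F_n(\varepsilon))$ then yields
\[
\frac{m_n(F_n(\varepsilon))}{\diam^{h_n}(F_n(\varepsilon))}
=\sum_{j=k}^n w_j^{h_n}+\sum_{j=k}^n (R_j-1)\,w_j^{h_n}.
\]

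The error term is bounded in absolute value by $(C/n)\sum_{j=k}^n w_j^{h_n}$. Concavity of $t\mapsto t^{h_n}$, together with $\sum w_j=1$ and at most $n$ nonzero summands, gives $\sum w_j^{h_n}\le n^{1-h_n}=e^{(1-h_n)\ln n}$, which by Hensley's asymptotic \eqref{eq:asymp_dim for nonlinear Gauss} is bounded; hence the error is $O(1/n)$. Dividing by $(1-h_n)\ln n\sim \tfrac{6}{\pi^2}\cdot\tfrac{\ln n}{n}$, the distortion correction contributes only $O(1/\ln n)\to 0$, and consequently
\[
\varliminf_{n\to\infty}\frac{m_n(F_n(\varepsilon))/\diam^{h_n}(F_n(\varepsilon))-1}{(1-h_n)\ln n}
=\varliminf_{n\to\infty}\frac{\sum_{j=k}^n w_j^{h_n}-1}{(1-h_n)\ln n}\ge 1-\varepsilon,
\]
the last inequality being Lemma~\ref{l120260127} applied with $s_n=h_n$ (which converges to $1$ by \eqref{eq:asymp_dim for nonlinear Gauss}).

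The main, and essentially only, obstacle is verifying that the nonlinear distortion $R_j-1$ does not contaminate the delicate $\ln n$-scale limit. This is won by the fact that $|g_j'|\asymp j^{-2}$ forces the distortion scale to be $1/j=O(1/n)$, a full $\ln n$ factor smaller than the normalization $(1-h_n)\ln n$; everything else reduces to the linear estimate already proved.
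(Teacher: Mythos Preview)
Your proof is correct and follows essentially the same approach as the paper's: decompose $F_n(\varepsilon)$ into the first-level cylinders $\Delta_j$, compare $m_n(\Delta_j)$ with $|\Delta_j|^{h_n}$ via the elementary distortion bound $|g_j'(y)|/|g_j'(z)|=1+O(1/j)$, reduce to $\sum w_j^{h_n}-1$, and then invoke Lemma~\ref{l120260127}. The only cosmetic difference is that the paper keeps just the one-sided inequality $m_n(\Delta_j)\ge (1-1/n_\varepsilon)^{h_n}|\Delta_j|^{h_n}$ (which is all that is needed for a lower bound), whereas you write a two-sided estimate $m_n(\Delta_j)=R_j|\Delta_j|^{h_n}$ with $R_j=1+O(1/n)$ and then control the signed error term; both routes dispose of the correction by the same $O(1/\ln n)$ computation.
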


\begin{proof}
Denote 
$$
n_{\varepsilon}:=n-n^{1-\varepsilon}.
$$
For every $[n_{\varepsilon}]+1\le j\le n$, we have 
$$
\bal
m_n({\Delta_j})
&=\int_0^1\frac{1}{(x+j)^{2h_n}}\, dm_n(x)
\ge (j+1)^{-2h_n}
=(j(j+1))^{-h_n}\left(\frac{j}{j+1}\right)^{h_n}
\\
&=\left(\frac{j}{j+1}\right)^{h_n}|{\Delta_j}|^{h_n}
=\left(1-\frac{1}{j+1}\right)^{h_n}|{\Delta_j}|^{h_n}
\\
&\ge \left(1-\frac{1}{n_{\varepsilon}}\right)^{h_n}|{\Delta_j}|^{h_n}.
\eal
$$
Hence, 
$$
m_n(F_n(\varepsilon))
\ge \left(1-\frac{1}{n_{\varepsilon}}\right)^{h_n}\sum_{j=[n_\varepsilon]+1}^n|{\Delta_j}|^{h_n}.
$$
Therefore, denoting $w_j=\frac{\Delta_j}{|F_n(\varepsilon)|}$,
$$
\bal
\frac{m_n(F_n(\varepsilon))}{|F_n(\varepsilon)|^{h_n}}
&\ge \left(1-\frac{1}{n_{\varepsilon}}\right)^{h_n}
\frac{\sum \limits_{j = [n_{\varepsilon}]+1}^{n} |{\Delta_j}|^{h_n}}{|F_n(\varepsilon)|^{h_n}}
\\
&= \left(1-\frac{1}{n_{\varepsilon}}\right)^{h_n}
\sum \limits_{j = [n_{\varepsilon}]+1}^{n}w_j^{h_n}.
\eal
$$
Hence,
$$
\bal
\frac{m_n(F_n(\varepsilon))}{|F_n(\varepsilon)|^{h_n}}-1
&\ge \left(1-\frac{1}{n_{\varepsilon}}\right)^{h_n}
\sum \limits_{j = [n_{\varepsilon}]+1}^{n}w_j^{h_n}-1
\\
&=\left(1-\frac{1}{n_{\varepsilon}}\right)^{h_n}
\left(\sum \limits_{j = [n_{\varepsilon}]+1}^{n}w_j^{h_n}-1\right) + \left(1-\frac{1}{n_{\varepsilon}}\right)^{h_n}-1.
\eal
$$
Therefore, noting \eqref{eq:asymp_dim for linear Gauss} and applying Lemma~\ref{l120260127} with $s_n=h_n$, we get 
\beq
\label{720260130}
\bal
\varliminf\limits_{n \to \infty}&\frac{1}{(1-h_n)\ln{n}}
\left(\frac{m_n(F_n(\varepsilon))}{|F_n(\varepsilon)|^{h_n}}-1\right) \ge
\\
&\ge \varliminf\limits_{n \to \infty}
\left(1-\frac{1}{n_{\varepsilon}}\right)^{h_n} \frac{1}{(1-h_n)\ln{n}}\left(\sum \limits_{j = [n_{\varepsilon}]+1}^{n}w_j^{h_n}-1\right)
+\varliminf\limits_{n \to \infty} \frac{\left(1-\frac{1}{n_{\varepsilon}}\right)^{h_n}-1}{(1-h_n)\ln{n}}
\\
&\ge 1-\varepsilon+\varliminf\limits_{n \to \infty} \frac{\left(1-\frac{1}{n_{\varepsilon}}\right)^{h_n}-1}{(1-h_n)\ln{n}}.
\eal
\eeq

Now, by the Mean Value Theorem,
$$
1-\left(1-\frac{1}{n_{\varepsilon}}\right)^{h_n}
\le \frac{h_n}{n_{\varepsilon}}\left(1-\frac{1}{n_{\varepsilon}}\right)^{h_n-1}
=h_n\frac{(n_{\varepsilon}-1)^{h_n-1}}{n_{\varepsilon}^{h_n}}
\le h_n(n_{\varepsilon}-1)^{-1}
\le \frac{1}{n_{\varepsilon}-1}.
$$
So, invoking \eqref{eq:asymp_dim for nonlinear Gauss}, we get that
$$
\varlimsup\limits_{n \to \infty}\frac{1-\left(1-\frac{1}{n_{\varepsilon}}\right)^{h_n}}{(1-h_n)\ln{n}}\le 0,
$$
or, equivalently,
$$
\varliminf\limits_{n \to \infty} \frac{\left(1-\frac{1}{n_{\varepsilon}}\right)^{h_n}-1}{(1-h_n)\ln{n}}\ge 0.
$$
Inserting this to \eqref{720260130} completes the proof of Lemma~\ref{l320260129}.
\end{proof}

\subsection{Any scale estimates}

\begin{lemma}[Distortion estimate]\label{lem:distortion gomega2}
There is a constant $C\in (0,+\infty)$ independent of $n\in\mathbb N\setminus\{1\}$ and $m\in\mathbb N$ such that for every map $g_\omega$, $\omega\in\N^*$, and all $z,w\in g_n^m([0,1])$, we have that
$$
\frac{|g_\omega'(z)|}{|g_\omega'(w)|}\le 1+\frac{C}{n^{2m}}.
$$
 \end{lemma}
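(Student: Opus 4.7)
The strategy is to mimic and generalize the proof of Lemma~\ref{lem:distortion gomega}, which handled the case $m=2$, by combining a uniform diameter estimate on $g_n^m([0,1])$ with Koebe-type distortion bounds applied to the conformal extension of $g_\omega$ furnished by Lemma~\ref{lem: conformal_extension}.

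The first step would be to show that
$$
\diam\bigl(g_n^m([0,1])\bigr)\le n^{-2m}.
$$
Since $g_n([0,1])=[1/(n+1),1/n]$, an easy induction gives $g_n^k([0,1])\subset [1/(n+1),1/n]$ for every $k\ge 1$. In particular $g_n^k(x)\ge 0$ for every $x\in[0,1]$ and every $0\le k\le m-1$, so $|g_n'(g_n^k(x))|=(g_n^k(x)+n)^{-2}\le n^{-2}$ at every intermediate step of the orbit. The chain rule then yields $|(g_n^m)'(x)|\le n^{-2m}$ uniformly in $x\in[0,1]$, and the mean value theorem delivers the claimed diameter bound.

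The second step would be to invoke Lemma~\ref{lem: conformal_extension}: each $g_\omega$ is a univalent self-map of the fixed disc $\mathbb{D}_{[-\xi,1+\xi]}$. For $n\ge 2$ the set $g_n^m([0,1])$ is contained in the compact subset $K:=[0,1/2]$, which lies inside $\mathbb{D}_{[-\xi,1+\xi]}$ at positive distance from its boundary. Standard consequences of univalence (for instance the pre-Schwarzian bound $|f''(z)/f'(z)|\le C_0/R$ on the half-radius subdisc of any $\mathbb{D}(a,R)$ on which $f$ is univalent) furnish a constant $M$, depending only on the geometry of $K$ inside $\mathbb{D}_{[-\xi,1+\xi]}$, such that $\sup_{z\in K}|g_\omega''(z)/g_\omega'(z)|\le M$ for every word $\omega$. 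Integrating $(\log g_\omega')'$ along the straight segment between any two points $z,w\in g_n^m([0,1])\subset K$ then yields
$$
\bigl|\log|g_\omega'(z)|-\log|g_\omega'(w)|\bigr|\le M|z-w|\le M n^{-2m},
$$
and exponentiating produces $|g_\omega'(z)|/|g_\omega'(w)|\le e^{Mn^{-2m}}\le 1+Cn^{-2m}$ for a suitable constant $C$.

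The only point requiring care is that the constant $C$ must be genuinely independent of $\omega$, $n$, and $m$. The $\omega$-uniformity is automatic because all $g_\omega$ share the same fixed conformal ``arena'' $\mathbb{D}_{[-\xi,1+\xi]}$ from Lemma~\ref{lem: conformal_extension}, while the dependence on $m$ and $n$ enters only through the diameter estimate. Consequently the first step is the core of the argument; once that chain-rule bound $|(g_n^m)'|\le n^{-2m}$ is in place, the distortion estimate is a direct application of the standard theory and presents no further obstacle.
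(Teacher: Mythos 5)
Your proposal is correct and takes essentially the same approach as the paper: the paper's proof of this lemma consists of the single remark that it is identical to the proof of Lemma~\ref{lem:distortion gomega} (Koebe distortion applied to the univalent self-maps $g_\omega$ of the fixed disc $\mathbb D_{[-\xi,1+\xi]}$ from Lemma~\ref{lem: conformal_extension}), the only difference being that now $|z-w|\le n^{-2m}$. Your chain-rule derivation of the diameter bound $\diam\big(g_n^m([0,1])\big)\le n^{-2m}$ and your integration of the logarithmic derivative merely make explicit the standard steps the paper leaves implicit.
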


\begin{proof}
The proof is the same as that of Lemma~\ref{lem:distortion gomega}. The only difference is that now $|z-w|<n^{-2m}$.
\end{proof}

We shall prove the following.

\begin{lemma}\label{prop:asymp_local_nonlinear}
Fix $\beta\in (0,1)$. Then for all $n$ sufficiently large the following holds: 

For $m_n$--a.e. $x\in J_n$ there exists an infinite  sequence of intervals $F_k(x)$, $k\in\mathbb N$, such that $x\in F_k(x)$ for all $k\in\mathbb N$, $\lim_{k\to\infty}\diam(F_k(x))=0$, and
$$
\frac{\frac{m_n(F_k(x))}{\diam^{h_n}(F_k(x))}-1}{(1-h_n)\ln n}\ge\beta.
$$
for all $k\in\mathbb N$.
\end{lemma}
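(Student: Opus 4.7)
The plan is to transfer the macroscopic density estimate of Lemma~\ref{l320260129} down to arbitrarily small scales at $m_n$-a.e.\ point by pulling back a suitable ``anchor'' interval through inverse branches of $G$, using ergodicity to ensure the pullbacks contain $x$ infinitely often. The crucial point is that every multiplicative correction introduced by distortion along the way must be of polynomial order $O(1/n^\alpha)$, hence $o\big((1-h_n)\ln n\big)$ by \eqref{eq:asymp_dim for nonlinear Gauss}.

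Fix $\varepsilon\in(0,1)$ with $1-\varepsilon>\beta$ and set
$$
F_n'(\varepsilon):=g_n^2\big(F_n(\varepsilon)\big)\subset g_n^2([0,1]).
$$
Because $F_n(\varepsilon)\subset[b_{n+1},1]$ and $|g_n'(x)/g_n'(y)|=(y+n)^2/(x+n)^2=1+O(1/n)$, the distortion of $g_n^2$ on $F_n(\varepsilon)$ is $1+O(1/n)$; hence by conformality of $m_n$ together with the mean value theorem applied to $\diam(F_n'(\varepsilon))$,
$$
\frac{m_n(F_n'(\varepsilon))}{\diam^{h_n}(F_n'(\varepsilon))}=\frac{m_n(F_n(\varepsilon))}{\diam^{h_n}(F_n(\varepsilon))}\big(1+O(1/n)\big).
$$
Since $1/n=o((1-h_n)\ln n)$, Lemma~\ref{l320260129} then yields
$$
\frac{m_n(F_n'(\varepsilon))}{\diam^{h_n}(F_n'(\varepsilon))}-1\ge (1-\varepsilon)(1-h_n)\ln n+o\big((1-h_n)\ln n\big).
$$

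Next, I apply the Birkhoff ergodic theorem to the shift $\sigma$ on $\{1,\ldots,n\}^{\mathbb N}$ with its ergodic invariant measure $\tilde\mu_n$. The cylinder
$$
A_n:=\big\{\omega:\omega_1=\omega_2=n,\;\omega_3\in\{[n-n^{1-\varepsilon}]+1,\ldots,n\}\big\}
$$
has positive $\tilde\mu_n$-measure, and because $\tilde\mu_n$ is equivalent to $\tilde m_n$, for $m_n$-a.e.\ $x\in J_n$ with continued fraction expansion $\omega$ the set $Y_n(x):=\{j\ge 1:\sigma^{j-1}(\omega)\in A_n\}$ is infinite. For each $j\in Y_n(x)$ one has $G^{j-1}(x)\in g_n^2(F_n(\varepsilon))=F_n'(\varepsilon)$, so
$$
x\in F_j(x):=g_{\omega|_{j-1}}\big(F_n'(\varepsilon)\big).
$$

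The final step is the transfer to small scales. Since $F_n'(\varepsilon)\subset g_n^2([0,1])$, Lemma~\ref{lem:distortion gomega2} applied with $m=2$ bounds the distortion of $g_{\omega|_{j-1}}$ on $F_n'(\varepsilon)$ by $1+C/n^4$. Together with conformality of $m_n$ and the mean value theorem, this gives
$$
\frac{m_n(F_j(x))}{\diam^{h_n}(F_j(x))}=\frac{m_n(F_n'(\varepsilon))}{\diam^{h_n}(F_n'(\varepsilon))}\big(1+O(1/n^4)\big),
$$
and since $1/n^4=o((1-h_n)\ln n)$ this correction is absorbed, yielding
$$
\frac{\frac{m_n(F_j(x))}{\diam^{h_n}(F_j(x))}-1}{(1-h_n)\ln n}\ge (1-\varepsilon)-o(1)
$$
uniformly in $j\in Y_n(x)$. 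For all $n$ large enough the right-hand side exceeds $\beta$, and $\diam(F_j(x))\to 0$ along $j\in Y_n(x)$ because the system $G_n$ is uniformly contracting after two iterates. The main obstacle throughout is the bookkeeping of these distortion errors: passing through $g_n^2(F_n(\varepsilon))$ rather than $F_n(\varepsilon)$ directly is precisely what reduces the subsequent distortion of the long composition $g_{\omega|_{j-1}}$ to $1+C/n^4$, which is the only estimate sharp enough to be absorbed into $o\big((1-h_n)\ln n\big)$.
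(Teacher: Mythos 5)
Your argument is correct and follows essentially the same route as the paper: anchor the high-density interval $F_n(\varepsilon)$ from Lemma~\ref{l320260129} inside the $n$-th branch, use Birkhoff's theorem for the ergodic measure $\tilde\mu_n$ (equivalent to $\tilde m_n$) to get infinitely many returns of the orbit to that anchor, and absorb all distortion corrections into $o\big((1-h_n)\ln n\big)$. The only difference is cosmetic: you push forward by $g_n^2$ (so that Lemma~\ref{lem:distortion gomega2} with $m=2$ applies verbatim to $g_n^2([0,1])$) and accordingly constrain three symbols in the recurrence cylinder, whereas the paper pushes forward by a single $g_n$ and constrains two symbols; both yield the same polynomial-order error terms.
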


\begin{proof}
We start by fixing $\varepsilon\in (0,1)$ such that $1-\varepsilon\in (\beta,1)$. Then we fix $\eta \in (\beta,1-\varepsilon)$. Then there exists $n_0\in\mathbb N$ so large that
$$\eta\cdot \left (1+\frac{C}{n^{4}}\right )^{-2}>\beta$$ for all $n\ge n_0$,
where $C$ is the constant coming from Lemma~\ref{lem:distortion gomega}. We abbreviate
$$
\hat n_\varepsilon:=[n-n^{1-\varepsilon}]+1=[n_\varepsilon]+1
$$

Recall that  every point $x\in J_n$ is uniquely determined by an infinite sequence $\omega=\omega(x)$,  $\omega\in \{1,\dots,n\}^\mathbb N$, i.e $x=\pi(\omega)$.  
Then for $m_n$-a.e. $x\in J_n$ we see, exactly as in the proof of Proposition~\ref{prop:gauss_hausdorff_above}, that  for infinitely many indices $j_k$ (depending on $x$)  the set 
$$F_k(x):=g_{\omega|_{j_k-1}}\circ g_n(F_n(\varepsilon))$$ 
contains $x$. 
Here $\omega=\omega(x)$ is the unique sequence which determines $x\in J_n$.

We shall bound from below  the ratio 
$$
\frac{m_n(F_k(x))}{\diam^{h_n}(F_k(x))}.
$$
First, notice that we have the following formulas:

$$
\begin{aligned} 
m_n\big(g_n(F_n(\varepsilon))\big)
&=\int_ { F_n(\varepsilon)}\frac{1}{(x+n)^{2h_n}}dm_n(x)
\ge\min_ {F_n(\varepsilon)}\left\{\frac{1}{(x+n)^{2h_n}}\right\} m_n(F_n(\varepsilon))\\
&\ge  \frac{1}{(n+1)^{2h_n}}\cdot  m_n(F_n(\varepsilon)),
\end{aligned}
$$
and
$$
\diam\big(g_n( F_n(\varepsilon))\big)
=\frac{1}{n+\frac{1}{n+1}}-\frac{1}{n+\frac{1}{\hat n_\varepsilon}}=\frac{n+1-\hat n_\varepsilon}{(n^2+n+1)(n\hat n_\varepsilon+1)},
$$
and
$$\diam (F_n(\varepsilon))=\frac{n+1-\hat n_\varepsilon}{(n+1)\hat n_\varepsilon}.$$
Therefore,
$$\frac{m_n(g_n(  F_n(\varepsilon)))}{\diam^{h_n} (g_n( F_n(\varepsilon)))}
\ge \left [   \frac{1}{(n+1)^2} \left (\frac{(n^2+n+1)(n\hat n_\varepsilon+1)}{(n+1)\hat n_\varepsilon} \right )\right]^{h_n}\cdot \frac{m_n( F_n(\varepsilon)) }{(\diam  F_n(\varepsilon))^{h_n}}.$$
The expression in the bracket is equal to
$$\frac{n^3\hat n_\varepsilon+n^2\hat n_\varepsilon+n^2+n\hat n_\varepsilon+n+1}{(n^3+3n^2+3n+1)\hat n_\varepsilon}=1-\frac{2n^2\hat n_\varepsilon+2n\hat n_\varepsilon+\hat n_\varepsilon-n^2-n-1}{(n^3+3n^2+3n+1)\hat n_\varepsilon}\ge 1-\frac 3 n $$
for all $n\in\N$ sufficiently large. Thus, for all such $n\in\N$ sufficiently large we have
$$
\begin{aligned}
\frac{\frac{m_n(g_n(  F_n(\varepsilon)))}{\diam^{h_n} (g_n( F_n(\varepsilon)))}-1}{(1-h_n)\ln n}
&\ge\frac{\frac{m_n( F_n(\varepsilon))}{\diam^{h_n} (g_n( F_n(\varepsilon)))}(1-\frac 3 n) -1}{(1-h_n)\ln n} \\
&=\left (1-\frac 3 n\right )\left (\frac {\frac{m_n(F_n(\varepsilon))}{\diam^{h_n} (g_n( F_n(\varepsilon)))}-1}{(1-h_n)\ln n} \right )-\frac 3 n\frac{1}{(1-h_n)\ln n}
\ge \eta.
\end{aligned}
$$
Therefore,
$$
\begin{aligned}
\frac{\frac{m_n(F_k(x))}{\diam^{h_n} (F_k(x))}-1}{(1-h_n)\ln n}
&\ge  \frac{\frac{m_n(g_n(  F_n(\varepsilon)))}{\diam ^{h_n}}(g_n( F_n(\varepsilon))\left (1-\frac {C}{n^4}\right )^{-2}-1}{(1-h_n)\ln n}\\
&=\frac{\frac{m_n(g_n(  F_n(\varepsilon)))}{\diam^{h_n}(g_n( F_n(\varepsilon))}-1}{(1-h_n)\ln n} \left (1-\frac {C}{n^4}\right )^{-2}+\frac{\left (1-\frac {C}{n^4}\right )^{-2}-1}{(1-h_n)\ln n}
\\
&>\eta \left (1-\frac {C}{n^4}\right )^{-2}+ \frac{\left (1-\frac {C}{n^4}\right )^{-2}-1}{(1-h_n)\ln n}
\ge \beta
\end{aligned}
$$
for all $n$ large enough.
\end{proof}

We are ready to prove the final theorem of this section.

\begin{theorem}\label{thm:asymp_gauss}
For the Gauss map we have that
$$\liminf_{n\to\infty} \frac{1-H_n}{(1-h_n)\ln n}\ge 1.$$
Thus,
$$\liminf_{n\to\infty} \frac{n(1-H_n)}{\ln n}\ge \frac{6}{\pi^2}.$$
\end{theorem}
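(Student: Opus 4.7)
The plan is to combine the local density estimate from Lemma~\ref{prop:asymp_local_nonlinear} with the density characterization of the Hausdorff measure in Corollary~\ref{cor:fundamental}, applied to the set $J_n\subset\mathbb R$. Since $0<H_n<\infty$ by \eqref{220260226}, Corollary~\ref{cor:fundamental} yields, for $m_n$--a.e.\ $x\in J_n$,
\[
\frac{1}{H_n}=\lim_{r\to 0}\sup\left\{\frac{m_n(F)}{\diam^{h_n}(F)}:x\in F\subset[0,1]\text{ closed interval},\ \diam(F)\le r\right\}.
\]

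First I would fix an arbitrary $\beta\in(0,1)$. By Lemma~\ref{prop:asymp_local_nonlinear}, for all sufficiently large $n$ there is a set of full $m_n$--measure on which one can select intervals $F_k(x)\ni x$ with $\diam(F_k(x))\to 0$ satisfying
\[
\frac{m_n(F_k(x))}{\diam^{h_n}(F_k(x))}\ge 1+\beta(1-h_n)\ln n.
\]
Since for any given $r>0$ we eventually have $\diam(F_k(x))\le r$, the supremum inside the density formula is bounded below by $1+\beta(1-h_n)\ln n$ for every $r>0$. Passing to the limit $r\to 0$ yields, for every sufficiently large $n$,
\[
\frac{1}{H_n}-1 \ge \beta(1-h_n)\ln n,
\]
equivalently $\frac{1-H_n}{(1-h_n)\ln n}\ge \beta H_n$.

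Next I would invoke the continuity statement \eqref{320260226} for the nonlinear Gauss system (established in \cite{uz}), which asserts $H_n\to 1$. Taking $\liminf_{n\to\infty}$ in the displayed inequality and then letting $\beta\nearrow 1$ gives $\liminf_{n\to\infty}\frac{1-H_n}{(1-h_n)\ln n}\ge 1$, which is the first conclusion. The second one is immediate from the identity
\[
\frac{n(1-H_n)}{\ln n} = \frac{1-H_n}{(1-h_n)\ln n}\cdot n(1-h_n)
\]
together with Hensley's asymptotic \eqref{eq:asymp_dim for nonlinear Gauss}, $\lim_{n\to\infty}n(1-h_n)=6/\pi^2$. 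The substantive work has already been carried out in Lemma~\ref{prop:asymp_local_nonlinear} (which itself relies on Lemma~\ref{l320260129} at large scale plus the distortion estimate Lemma~\ref{lem:distortion gomega2} to transfer the bound to arbitrarily small scales along a $m_n$--generic orbit), so this final theorem is a clean packaging step with no serious obstacle beyond verifying that the cited results apply verbatim.
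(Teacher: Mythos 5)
Your proposal is correct and follows essentially the same route as the paper: combining Corollary~\ref{cor:fundamental} with Lemma~\ref{prop:asymp_local_nonlinear} to get $\frac{1}{H_n}-1\ge\beta(1-h_n)\ln n$, then letting $\beta\nearrow 1$, with the second assertion following from Hensley's asymptotic. In fact you are slightly more careful than the paper's terse write-up, since you explicitly note that passing from $\frac{1-H_n}{H_n}$ to $1-H_n$ requires the continuity result $H_n\to 1$.
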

\begin{proof}
Since, by Corollary \ref{cor:fundamental}, we have that
\[
\frac{1}{H_n}= \lim\limits_{r \to 0} \sup  \left \{\frac{m_n(F)}{{\rm{diam}}^{h_n}(F)} : x\in F, F\subset [0,1] -\text{interval}, \diam(F) < r\right \}.
\]
for $H_{h_n}$ a.e. point $x\in J_n$, it follows from Lemma~\ref{prop:asymp_local_nonlinear} that
$$\liminf_{n\to\infty} \frac{1-H_n}{(1-h_n)\ln n}\ge \beta$$
for every $\beta\in (0,1)$. So, letting $\beta$ grow to $1$, ends the proof of Theorem~\ref{thm:asymp_gauss}.
\end{proof}
\section{The upper bound result}
Recall that  $J_n$ be the limit set of the IFS generated by $n$ initial maps $g_k, k\le n$, Equivalently, $J_n$ is the set of those  irrational numbers in $[0,1]$ for which the continued fraction expansion has entries bounded by $n$.  
Denote $h_n=\dim_H(J_n)$. Hausdorff measure of the set $J_n$, evaluated at its Hausdorff dimension, i.e. 

$$H_n:=H_{h_n}(J_n)$$
is positive and finite.

In what follows  we prove the following (more difficult) estimate from above.

\begin{theorem}\label{thm:gauss_above}
$$
\limsup_{n\to\infty}\frac{1-H_n}{(1-h_n)\ln n}\le 1, 
$$
where, we recall, that for every $n\in\N$, $h_n$ is the Hausdorff dimension of the limit set $J_n$ of the iterated function system generated by the first $n$ initial analytic inverse branches of the (non-linear) Gauss map.
\end{theorem}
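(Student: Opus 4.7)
The plan is to mirror the blueprint of Section~\ref{sec:linear_above} (the upper-bound proof for the piecewise linear analogue), compensating for the non-affine character of the Gauss branches via the distortion estimates of Lemmas~\ref{lem: conformal_extension} and~\ref{lem:distortion gomega2}. The truncated system $G_n$ satisfies the Strong Separation Condition (since $J_n$ consists of irrationals and so avoids every basic boundary $1/k$), so Proposition~\ref{prop:density_ifs} applies and it suffices to show
\[
\sup_{F}\frac{m_n(F)}{|F|^{h_n}}\le 1+(1-h_n)\ln n+o\bigl((1-h_n)\ln n\bigr),
\]
where the supremum is over all closed intervals $F\subset[0,1]$.

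\emph{Reduction via pullback.} Exactly as in Section~\ref{sec:linear_above}, Step~6, for an arbitrary $F$ I would locate the maximal word $\omega\in\{1,\dots,n\}^M$ with $F\subset g_\omega([0,1])$ and write $F=g_\omega(F')$. By maximality, $F'$ straddles some basic boundary $b_k$, so $F'=I_1\cup[b_{l+1},b_k]\cup I_2$ (Case~1) or $F'=I_1\cup I_2$ (Case~2b). Lemma~\ref{lem:distortion gomega2} gives
\[
\frac{m_n(F)}{|F|^{h_n}}=\frac{m_n(F')}{|F'|^{h_n}}\bigl(1+O(n^{-2M})\bigr),
\]
and the multiplicative error $O(n^{-2M})$ for $M\ge 1$ is negligible compared to $(1-h_n)\ln n$, which is of order $\ln n/n$. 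The problem thus reduces to bounding the ratio on the three reference shapes $[b_{l+1},b_k]$, $[0,r]$, and $[r,1]$, then recombining. The nested decomposition Lemma~\ref{lem: rozdzial przedzialu 0,r} and the geometric Lemma~\ref{lem:est_wn_2} transfer verbatim because they depend only on the combinatorics of the basic intervals $\Delta_j$; Lemma~\ref{lem:distortion gomega2} absorbs the distortion from the pullbacks $g_k^{-1}$ used in the analogs of Steps 3--5. The error factor $R_n=\sum_i w_i^{h_n}\le C^{1-h_n}=1+O(1-h_n)$ from combining pieces (the three-summand analog of Claim~1 in Lemma~\ref{prop: 0r ograniczenie z gory}) is then absorbed into $o((1-h_n)\ln n)$.

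\emph{Core reference estimate on $F=[b_{l+1},b_k]$.} By conformality,
\[
\frac{m_n(F)}{|F|^{h_n}}=\frac{1}{|F|^{h_n}}\sum_{j=k}^{l}\int_0^1(x+j)^{-2h_n}\,dm_n(x).
\]
If $k\ge k_n^\star$ for a threshold $k_n^\star$ growing fast enough with $n$, the crude pointwise bound $(x+j)^{-2h_n}\le j^{-2h_n}=(1+1/j)^{h_n}|\Delta_j|^{h_n}$ together with Lemma~\ref{l120260303} applied to $w_j=|\Delta_j|/|F|$ already yields the target $1+(1-h_n)\ln n+o((1-h_n)\ln n)$. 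If $k<k_n^\star$, the interval $F$ is macroscopic ($|F|\ge c>0$), and since $|F|\le 1$ gives $|F|^{1-h_n}\le 1$,
\[
\frac{m_n(F)}{|F|^{h_n}}=\frac{m_n(F)}{|F|}\,|F|^{1-h_n}\le\frac{m_n(F)}{|F|},
\]
so it suffices to prove $m_n(F)/|F|\le 1+O(1-h_n)$ for macroscopic $F$.

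\emph{The main obstacle.} The macroscopic comparison $m_n(F)=|F|(1+O(1-h_n))$ is the heart of the matter. In the linear case the identity $m_n(\Delta_j)=|\Delta_j|^{h_n}$ turned the sum identity $\sum m_n(\Delta_j)=1$ into a purely combinatorial fact; here $m_n(\Delta_j)/|\Delta_j|^{h_n}$ is \emph{not} uniformly close to $1$ in $j$, and the pointwise estimate $(x+j)^{-2h_n}\le j^{-2h_n}$ is far too lossy: summed over $j=1,\dots,n$ at $F=[1/(n+1),1]$ it overshoots the actual ratio by a factor of order $\pi^2/6$. The required quantitative input must come from spectral and perturbation theory of the truncated Gauss transfer operator
\[
\mathcal L_{n,s}\phi(x)=\sum_{j=1}^{n}(x+j)^{-2s}\phi\bigl(1/(x+j)\bigr),
\]
regarded as a perturbation of $\mathcal L_{\infty,1}$ on a Mayer-type Banach space. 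For $s=h_n$, $\mathcal L_{n,h_n}$ has $1$ as a simple isolated leading eigenvalue with left eigenmeasure $m_n$ and right eigenfunction $\phi_n$; as $(n,s)\to(\infty,1)$ both eigenelements converge to those of $\mathcal L_{\infty,1}$ (whose conformal measure is Lebesgue) at quantitative rate $O(1-h_n)=O(1/n)$. This yields $m_n(F)=|F|(1+O(1-h_n))$ for macroscopic $F$, closing the small-$k$ regime and, together with the large-$k$ combinatorial bound and the pullback reduction, the theorem.
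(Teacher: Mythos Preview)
Your overall architecture (decompose, use Lemma~\ref{l120260303} for the combinatorial sums, and invoke transfer-operator perturbation for the measure-comparison step) matches the paper's, but the pullback reduction has a genuine gap. The claim
\[
\frac{m_n(F)}{|F|^{h_n}}=\frac{m_n(F')}{|F'|^{h_n}}\bigl(1+O(n^{-2M})\bigr)
\]
is not what Lemma~\ref{lem:distortion gomega2} gives: that lemma controls the distortion of $g_\omega$ on the \emph{tiny} interval $g_n^m([0,1])$, whereas here you apply $g_\omega$ to $F'\subset[0,1]$, which can have length of order $1$. By Lemma~\ref{lem:dist1} the distortion of $g_\omega$ on $F'$ is only $1+O(|F'|)$, an order-one factor, so the ratio is not approximately preserved under pullback. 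Relatedly, $F'$ is not macroscopic in general: it straddles some $b_k$, but $k$ can be as large as $n$, so $|F'|$ can be of order $n^{-2}$. Your large-$k$/small-$k$ dichotomy then leaves a gap, since the crude bound $(x+j)^{-2h_n}\le j^{-2h_n}$ needs $k\gg n/\ln n$, while the macroscopic argument needs $k$ bounded. Testing the perturbation on indicator functions only gives an \emph{additive} error $|m_n(F)-|F||=O(1-h_n)$, which blows up when divided by $|F|$ for small $F$.

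The paper avoids pullback entirely and instead proves all the reference estimates directly for $g_\omega$-images with \emph{arbitrary} $\omega\in\N^*$. The key is Lemma~\ref{prop:passing1}: $m_n(g_\omega([0,1]))\le(1+C_1 n^{-1})|g_\omega([0,1])|^{h_n}$ uniformly in $\omega$. This is obtained by testing the perturbation bound of Proposition~\ref{prop:conformal_measures_close} not on indicators but on the smooth function $|g_\omega'|^{h_n}$; Lemma~\ref{prop:gomega_bounds} shows $\||g_\omega'|^{h_n}\|_{\BV}\le D\inf|g_\omega'|^{h_n}$, so the error becomes \emph{multiplicative}, and Jensen's inequality handles the remaining $\int|g_\omega'|^{h_n}\,dm_{1,\infty}$ versus $(\int|g_\omega'|\,dm_{1,\infty})^{h_n}$ discrepancy. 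From this uniform cylinder bound the estimates for $g_\omega([b_{l+1},b_k])$, $g_\omega([0,r))$, and $g_\omega((r,1])$ follow by summing and applying Lemma~\ref{l120260303}, and the general interval is then assembled (Proposition~\ref{prop: dowolny przedzialB}) without ever comparing $F$ to $F'$ at the level of ratios.
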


Along with Theorem~\ref{thm:asymp_gauss}, this gives the following main result of our paper.
\begin{theorem}\label{t120260312}
For the (non-linear) Gauss map we have
$$
\lim_{n\to\infty}\frac{1-H_n}{(1-h_n)\ln n}=1
$$
and
$$
\lim_{n\to\infty} \frac{n(1-H_n)}{\ln n}= \frac{6}{\pi^2}.
$$
\end{theorem}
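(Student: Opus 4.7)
The lower bound $\liminf_{n\to\infty}(1-H_n)/((1-h_n)\ln n) \ge 1$ is already in hand from Theorem~\ref{thm:asymp_gauss}, and the second assertion $\lim n(1-H_n)/\ln n = 6/\pi^2$ follows mechanically from a two-sided bound together with Hensley's asymptotic \eqref{eq:asymp_dim for nonlinear Gauss}. Thus the whole task reduces to proving Theorem~\ref{thm:gauss_above}, i.e.\ the matching upper bound $\limsup\le 1$.

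My plan is to transplant the six-step scheme of Sections~\ref{sec:linear_below}--\ref{sec:linear_above} from the linear to the nonlinear setting. Corollary~\ref{cor:fundamental} reduces the problem to proving
$$
\frac{m_n(F)}{\diam^{h_n}(F)} \le 1 + (1-h_n)\ln n + o\big((1-h_n)\ln n\big)
$$
uniformly over all closed intervals $F \subset [b_{n+1},1]$ intersecting $J_n$. For Step~1, the basic intervals $[b_{l+1},b_k]$, conformality of $m_n$ and the identity $|g_j'(x)|=(x+j)^{-2}$ yield
$$
m_n([b_{j+1},b_j]) = |[b_{j+1},b_j]|^{h_n}\big(1 + O(1/j)\big),
$$
so that with $w_j := |[b_{j+1},b_j]|/|[b_{l+1},b_k]|$ one applies Lemma~\ref{l120260303} (with $t_n = h_n$) to obtain the target bound, the multiplicative error $1+O(1/k)$ being swallowed by $o((1-h_n)\ln n)$. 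Steps~2--6 then decompose an arbitrary $F$ into at most three pieces (a basic block and two tails), pulling the tails back by a single $g_j$ or a longer composition $g_\omega$ of length $m$; Lemma~\ref{lem:distortion gomega2} gives a distortion factor $1 + O(n^{-2m})$ that is far smaller than $(1-h_n)\ln n$, so the ratio $m_n(\,\cdot\,)/\diam^{h_n}(\,\cdot\,)$ is preserved up to acceptable error under pullback.

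The combinatorial decomposition of $[0,r]$ and $[r,1]$ into intervals $I_m$ (Lemma~\ref{lem: rozdzial przedzialu 0,r}) is purely order-theoretic and transfers verbatim. The main obstacle, exactly as in Part~1, is to establish the nonlinear analogue of the \emph{error term} estimate
$$
R_n := \sum_m \left(\frac{|I_m|}{|[0,r]|}\right)^{h_n} \le 1 + O(1/n).
$$
In the linear case this rested on the geometric contraction bounds $|I_{n+2}|\le \tfrac12|I_n|$ (Lemma~\ref{lem:even}) and $|I_{n+2}|\le \tfrac14|I_n|$ for $n$ odd (Lemma~\ref{lem:odd}), each proved via the affine scaling $|b_k - b_{k+1}|\le b_{k+1}$ together with exact self-similarity of the $g_k$-images. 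Both ingredients survive nonlinearly: the inequality $|b_k-b_{k+1}|\le b_{k+1}$ is a numerical identity, while ``exact affine copy of $[0,b_{k+1}]$ or $[b_{k+1},b_k]$'' is replaced by ``bounded-distortion copy'' with multiplicative error $1+O(1/n^2)$ delivered by Lemma~\ref{lem:distortion gomega2}. Feeding these contraction estimates into Lemma~\ref{lem: dokladne wyznaczenie alpha} and then into Lemma~\ref{lem:est_wn_2} yields $R_n \le 2^{1-h_n}/(2^{h_n}-1)\cdot (1+O(1/n))$, which Taylor-expands to $1 + O(1-h_n) = 1 + O(1/n)$ by \eqref{eq:asymp_dim for nonlinear Gauss}.

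With the basic-interval bound of Step~1 and the controlled error $R_n = 1+O(1/n)$, exactly the Part~1 argument of Lemma~\ref{prop: 0r ograniczenie z gory} reassembles the pieces: each of the at most three sub-interval contributions is bounded by $1+(1-h_n)\ln n + o((1-h_n)\ln n)$, their sum picks up the multiplicative error $R_n$, and the product is still $1+(1-h_n)\ln n + o((1-h_n)\ln n)$. Taking the supremum over $F$ and comparing with $1/H_n$ via Corollary~\ref{cor:fundamental} proves $\limsup(1-H_n)/((1-h_n)\ln n)\le 1$, which combined with Theorem~\ref{thm:asymp_gauss} completes Theorem~\ref{t120260312}. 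The only delicate point throughout is verifying that every distortion factor introduced by a nonlinear pullback is of order $1 + O(n^{-2})$ or better, so that no compounding distortion survives at the scale $(1-h_n)\ln n \asymp \ln n/n$; this is ensured by the uniform Koebe estimate of Lemmas~\ref{lem:distortion gomega} and \ref{lem:distortion gomega2}.
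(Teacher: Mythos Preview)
Your reduction of the problem to Theorem~\ref{thm:gauss_above} is correct, but the proposed transplantation of the linear six-step scheme has a genuine gap at its very first step. For the basic intervals $[b_{l+1},b_k]$ you claim
\[
m_n([b_{j+1},b_j]) = |[b_{j+1},b_j]|^{h_n}\bigl(1+O(1/j)\bigr),
\]
which is true, but the resulting multiplicative error $1+O(1/k)$ is \emph{not} swallowed by $o((1-h_n)\ln n)\asymp o(\ln n/n)$ unless $k\gg n/\ln n$. For intervals $[b_{l+1},b_k]$ with $k$ bounded (say $k=1$ or $k=2$), which certainly arise in the decomposition of a general $F$, your error is of constant order and destroys the estimate. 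The same problem recurs at every pullback by $g_j$ with small $j$: the distortion of $g_j$ on $[0,1]$ is $1+\Theta(1/j)$, not $1+O(n^{-2m})$. You have misread Lemma~\ref{lem:distortion gomega2}: it bounds distortion only on the tiny interval $g_n^m([0,1])$ of diameter $\sim n^{-2m}$, not on arbitrary subintervals of $[0,1]$.

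What the paper does instead---and what you are missing---is a functional-analytic comparison of the conformal measure $m_{h_n,n}$ with Lebesgue measure $m_{1,\infty}$. Using Kato--Rellich perturbation of the transfer operators $\mathcal L_{t,n}$ on $\BV_N$ (Corollary~\ref{cor:perturb}, Proposition~\ref{prop:conformal_measures_close}), one obtains $\|m_{h_n,n}-m_{1,\infty}\|_{\BV_N^*}=O(1/n)$. Combined with Jensen's inequality and the uniform BV bound of Lemma~\ref{prop:gomega_bounds}, this yields the crucial Lemma~\ref{prop:passing1}: $m_{h_n,n}(g_\omega([0,1]))/|g_\omega([0,1])|^{h_n}\le 1+C_1 n^{-1}$ \emph{uniformly over all cylinders} $g_\omega([0,1])$, regardless of the entries of $\omega$. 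This uniform $O(1/n)$ bound is exactly what bare distortion cannot deliver, and it is the engine driving all subsequent steps (Lemmas~\ref{prop:copies}, \ref{prop:g_omega_r}, \ref{cor:copies-20260312}). The paper also handles the $R_n$-type error differently from what you sketch: rather than summing the full series as in Lemma~\ref{lem:est_wn_2}, it truncates the decomposition of $[0,r)$ at depth $N\asymp\ln n$ and bounds the $N+1$ remaining terms via Lemma~\ref{l120260303}, incurring an acceptable $O(n^{-1}\ln\ln n)$ cost.
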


\section{Transfer operators and their perturbations: Spectral properties}

Let $\BV$ be the vector space of all functions $f:(0,1)\to\mathbb C$ that have finite total variation $V_{(0,1)}(f)$. We consider its vector subspace $\BV_N$ consisting of all functions $f\in \BV$ such that
$$
f(x)=\frac 1 2 \big(f(x^+)+f(x^-)\big)
$$
for all $x\in (0,1)$, where $f(x^+)$, $f(x^-)$ respectively are the right-hand side and left-hand side limits of $f$ at $x$. It is well-known and not hard to see that $\BV_N$ becomes a Banach space if endowed with the norm
$$
\|f\|_{\BV}:=|f(0)^+|+V_{(0,1)}(f).
$$
It is immediate from this definition that
$$
\|f\|_{\infty}\le \|f\|_{\BV}.
$$
Fixing $t\in\mathbb C$ such that $ \re (t)>\frac 3 2$ and $n\in\N$, for every bounded function \\ $f:(0,1)\to \mathbb C$ define the bounded functions $\LL_{t,\infty}(f), \LL_{t,n}(f):(0,1)\longrightarrow \mathbb C$ by the respective formulas
\begin{equation}\label{eq:Lt,infty}
\LL_{t,\infty}(f)(x)
:=\sum_{k=1}^\infty f(g_k(x)) |g_k'(x)|^t
=\sum_{k=1}^\infty f\left (\frac{1}{x+k}\right )\cdot \frac{1}{(x+k)^{2t}}
\end{equation}
and
\begin{equation}\label{eq:Lt,n}
\LL_{t,n}(f)(x)
:=\sum_{k=1}^n f(g_k(x)) |g_k'(x)|^t
=\sum_{k=1}^n f\left (\frac{1}{x+k}\right )\cdot \frac{1}{(x+k)^{2t}}.
\end{equation}

For a bounded linear operator $L:\BV_N\to \BV_N$ we denote by $\|L\|_{\BV}$ its operator norm of $L$, i.e.,
$$
\|L\|_{\BV}=\sup\big\{\|L(f)\|_{\BV}: f\in\BV_N \  \text{and}  \  \|f\|_{\BV} \le 1\big\}.
$$
The following fact can be found in \cite{hensley}.

\begin{prop}[Lemma 1 in \cite{hensley}]
For every $t\in \mathbb C$ such that $\re( t)> \frac 3 4$ and every $n\in\mathbb N$ the operators $\mathcal L_{t,\infty}$ and $\mathcal L_{t,n}$ preserve the Banach space $BV_N$ and, moreover,
$$\mathcal L_{t,\infty}:\BV_N\to\BV_N  \  \ 
\text{and} \  \
\mathcal L_{t,n}:\BV_N\to\BV_N$$
are bounded linear operators.

Furthermore, for every $R\in (0,+\infty)$,
$$\max\Big\{\sup\big\{\|\mathcal L_{t,n}\|_{\BV}\big\},  \sup\{\|\mathcal L_{t,\infty}\|_{\BV}\big\}\Big\}<+\infty,
$$
where both suprema are taken over all $n\in\mathbb N$, all $t\in\mathbb C$ such that  $\re (t)>\frac 3 4$ and $|t|<R$.
\end{prop}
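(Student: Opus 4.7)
The plan is to estimate the $\BV$ norm of $\mathcal L_{t,\infty}(f)$ and $\mathcal L_{t,n}(f)$ termwise, using the classical product rule for total variation together with the observation that each branch $g_k(x)=1/(x+k)$ is a smooth monotone homeomorphism of $(0,1)$ onto $\Delta_k\subset (0,1)$. Since $g_k$ is monotone, the composition $f\circ g_k$ satisfies $V_{(0,1)}(f\circ g_k)\le V_{(0,1)}(f)\le \|f\|_{\BV}$, and $\|f\circ g_k\|_\infty\le\|f\|_\infty\le\|f\|_{\BV}$. The weight $\phi_{k,t}(x):=|g_k'(x)|^t=(x+k)^{-2t}$ is $C^\infty$ and satisfies the pointwise bounds
\[
\|\phi_{k,t}\|_\infty \le k^{-2\re(t)}, \qquad V_{(0,1)}(\phi_{k,t}) \le \int_0^1 2|t|(x+k)^{-2\re(t)-1}\,dx \le 2|t|\,k^{-2\re(t)-1}.
\]

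First I apply the product rule $V(uv)\le \|u\|_\infty V(v)+\|v\|_\infty V(u)$ to each summand of the operator, obtaining
\[
V_{(0,1)}\bigl((f\circ g_k)\cdot\phi_{k,t}\bigr) \le \|f\|_{\BV}\bigl(k^{-2\re(t)}+2|t|\,k^{-2\re(t)-1}\bigr).
\]
Separately, $|(f\circ g_k)(0^+)\phi_{k,t}(0^+)|\le \|f\|_{\BV}k^{-2\re(t)}$. Summing over $k\le n$ (resp.\ $k\in\mathbb N$) and using that $\sum_k k^{-2\re(t)}$ and $\sum_k k^{-2\re(t)-1}$ converge whenever $\re(t)>1/2$ (hence \emph{a fortiori} in the range $\re(t)>3/4$), yields
\[
\|\mathcal L_{t,n}(f)\|_{\BV}\le C(t)\|f\|_{\BV}, \qquad \|\mathcal L_{t,\infty}(f)\|_{\BV}\le C(t)\|f\|_{\BV},
\]
where $C(t)$ depends only on $\re(t)$ and $|t|$, and is bounded on each compact subset of $\{\re(t)>3/4\}$. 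The uniform bound claimed in the proposition therefore holds for $|t|\le R$ and $\re(t)>3/4$, with the truncated operators $\mathcal L_{t,n}$ automatically satisfying the same bound since their defining series is only a partial sum of the non-negative term bounds.

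Next I verify that the range of each operator actually lies in $\BV_N$ rather than in the larger space $\BV$. Each $g_k$ is a continuous monotone bijection onto its image, so at every $x\in(0,1)$ the one-sided limits satisfy $(f\circ g_k)(x^\pm)=f\bigl(g_k(x)^\mp\bigr)$, whence
\[
\tfrac{1}{2}\bigl[(f\circ g_k)(x^+)+(f\circ g_k)(x^-)\bigr]
=\tfrac{1}{2}\bigl[f(g_k(x)^-)+f(g_k(x)^+)\bigr]=f(g_k(x))=(f\circ g_k)(x),
\]
using that $f\in\BV_N$. Multiplying by the continuous function $\phi_{k,t}$ preserves the normalization, and a finite sum of normalized functions is normalized, handling $\mathcal L_{t,n}$. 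For the infinite operator, the uniform convergence of the tail (which follows from $\|\cdot\|_\infty\le \|\cdot\|_{\BV}$ together with the termwise $\BV$ bounds above) passes normalization to the limit.

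The only point requiring care is the balancing between the bound on the supremum and on the variation of the weight $\phi_{k,t}$: the former is summable only for $\re(t)>1/2$, while the latter is better by one power of $k$. The hypothesis $\re(t)>3/4$ in the proposition leaves room for both to be absolutely summable simultaneously and for the estimate to remain locally uniform in $t$, which is what ultimately justifies the uniform operator-norm bound needed for the perturbation arguments in the later sections. There is no serious obstacle beyond careful bookkeeping of the product-rule constants.
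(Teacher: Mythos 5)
Your proof is correct. The paper offers no argument of its own here --- it simply cites Lemma~1 of \cite{hensley} --- and your termwise estimate, bounding $V_{(0,1)}\big((f\circ g_k)\cdot|g_k'|^t\big)$ by the product rule together with $V(f\circ g_k)\le V(f)$ for the monotone branches, summing via $\sum_k k^{-2\re(t)}\le\zeta(3/2)$ and $\sum_k k^{-2\re(t)-1}\le\zeta(5/2)$ for $\re(t)>3/4$ (which also makes the bound uniform for $|t|<R$), and checking that the normalization $f(x)=\tfrac12(f(x^+)+f(x^-))$ passes through each branch and through the uniformly convergent series, is essentially the standard argument of the cited lemma.
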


We now collect some significant spectral properties of the operators $\mathcal L_{t,\infty}$ and $\mathcal L_{t,n}$.

Recall that we denoted by $h_n$ the Hausdorff dimension of the limit set $J_n$. We are especially interested in the properties of the operators $\mathcal L_{h_n,n}$.

The following proposition is a fairly direct combination of Lemma~2 and Lemma 3  in \cite{hensley}, together with \eqref{eq:asymp_dim for nonlinear Gauss} which is, we recall, the main result in \cite{hensley} providing the asymptotics of the dimension $h_n$.

\begin{lemma}\label{prop: close_operators}
There exist $C_0\in (0,+\infty)$ and $n_0\in\mathbb N$ such that
$$\|\mathcal L_{h_n, n}-\mathcal L_{1,\infty}\|_{\BV}\le  C_0 (1-h_n).$$
for $n\ge n_0$.
\end{lemma}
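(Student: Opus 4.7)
The plan is to apply the triangle inequality to split the difference into a truncation error and a parameter-perturbation error:
$$
\|\mathcal{L}_{h_n,n}-\mathcal{L}_{1,\infty}\|_{\BV}
\le \|\mathcal{L}_{h_n,n}-\mathcal{L}_{h_n,\infty}\|_{\BV}
+ \|\mathcal{L}_{h_n,\infty}-\mathcal{L}_{1,\infty}\|_{\BV},
$$
and to show that each of the two summands is $O(1-h_n)$.

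For the truncation term, applied to $f\in\BV_N$ of unit norm, $\mathcal{L}_{h_n,\infty}-\mathcal{L}_{h_n,n}$ acts as the tail sum $\sum_{k>n}(f\circ g_k)\,|g_k'|^{h_n}$. Pointwise this is bounded by $\|f\|_\infty\sum_{k>n}(x+k)^{-2h_n}$, and by comparison with an integral the tail equals $O\bigl(n^{1-2h_n}/(2h_n-1)\bigr)=O(1/n)$ when $h_n$ is close to $1$. The variation part is treated term by term: since each $g_k$ is monotone and $x\mapsto |g_k'(x)|^{h_n}=(x+k)^{-2h_n}$ is monotone on $(0,1)$, a standard product inequality for BV gives
$$
V_{(0,1)}\bigl((f\circ g_k)\,|g_k'|^{h_n}\bigr)\le \|f\|_\infty\,V_{(0,1)}(|g_k'|^{h_n})+\|\,|g_k'|^{h_n}\,\|_\infty\,V_{(0,1)}(f\circ g_k),
$$
each factor of which sums over $k>n$ to $O(1/n)$. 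Combined with the Hensley asymptotic \eqref{eq:asymp_dim for nonlinear Gauss} (which identifies $1/n$ with $1-h_n$ up to a constant), this yields $\|\mathcal{L}_{h_n,n}-\mathcal{L}_{h_n,\infty}\|_{\BV}=O(1-h_n)$.

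For the parameter perturbation, the pointwise identity
$$
|g_k'(x)|^{h_n}-|g_k'(x)|=\frac{1}{(x+k)^2}\Bigl((x+k)^{2(1-h_n)}-1\Bigr)
$$
is the basic input. Writing $(x+k)^{2(1-h_n)}-1=e^{2(1-h_n)\log(x+k)}-1$ and using $|e^\eta-1|\le C|\eta|$ for bounded $\eta$ on the range $k\le e^{1/(1-h_n)}$, while absorbing the remaining range into the convergent tail $\sum_k(x+k)^{-2h_n}$, one obtains
$$
\bigl|\mathcal{L}_{h_n,\infty}(f)(x)-\mathcal{L}_{1,\infty}(f)(x)\bigr|
\le C(1-h_n)\,\|f\|_\infty\sum_{k=1}^\infty\frac{\log(x+k)+1}{(x+k)^2},
$$
and the last sum is bounded uniformly in $x\in(0,1)$ (essentially the convergent series defining the Lyapunov exponent). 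The variation piece reduces, again via the product rule for BV and monotonicity of each $g_k$ and of $x\mapsto(x+k)^{-2h_n}-(x+k)^{-2}$ on $(0,1)$, to the same convergent sum weighted by $1-h_n$.

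The main technical obstacle is not the sup-norm control but the variation control, because a bound in $\BV$ requires uniform summability over $k$ of individual variations, not merely of suprema. The key inputs are (i) the monotonicity of $g_k$ and $|g_k'|^{h_n}$ on $(0,1)$, which lets one compute $V_{(0,1)}$ of each summand cleanly, and (ii) the integral-comparison bound on $\sum_{k>n}(x+k)^{-2h_n}$, whose rate matches $1-h_n$ precisely because of \eqref{eq:asymp_dim for nonlinear Gauss}. These are exactly the two estimates isolated as Lemma~2 and Lemma~3 in \cite{hensley}, which we will invoke directly to conclude the stated bound.
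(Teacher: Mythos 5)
Your proposal is correct and follows essentially the same route as the paper: a triangle-inequality split into the truncation error $\|\mathcal L_{h_n,n}-\mathcal L_{h_n,\infty}\|_{\BV}$ and the parameter error $\|\mathcal L_{h_n,\infty}-\mathcal L_{1,\infty}\|_{\BV}$, each controlled by Hensley's Lemma~2 (giving $8|t|n^{1-2\re t}$) and Lemma~3 (giving $44|s-t|$) respectively, and then converted to $O(1-h_n)$ via the asymptotic \eqref{eq:asymp_dim for nonlinear Gauss}, which forces $n^{2(1-h_n)}\to 1$ and identifies $n^{-1}$ with $1-h_n$ up to a constant. The extra sketches you give of why Hensley's two lemmas hold are a reasonable bonus but not needed, since, like the paper, you ultimately invoke those lemmas directly.
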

\begin{proof}
Lemma 2 in \cite{hensley} tells that
\beq
\label{120260219}
\norm{\mathcal L_{t,\infty}-\mathcal L_{t,n}}_{\BV}\le 8|t|n^{1-2\cdot \re  t}
\eeq
for all $t\in \mathbb C$ such that $\re (t)>\frac 3 4$  and all $n\in\mathbb N$ while Lemma 3 in \cite{hensley} provides the estimate
\beq
\label{220260219}
\norm{\mathcal L_{s,n}-\mathcal L_{t,n}}_{\BV}\le  44|s-t| 
\eeq
for all $s, t\in \mathbb C$ such that $\re(s),\re(t) >\frac 3 4$, $|s|, |t| <\frac 3 2 $, and all $n\in\mathbb N\cup\{\infty\}$. Thus,
$$
\begin{aligned}
\norm{\mathcal L_{h_n,n}-\mathcal L_{1,\infty}}_{\BV}
&\le \norm{\mathcal L_{h_n, n}-L_{h_n,\infty}}_{\BV}+\norm{\mathcal L_{h_n,\infty}-\mathcal L_{1,\infty}}_{\BV}
\\
&\le 44(1-h_n)+8h_n\cdot n^{1-2h_n}
\\
&=44(1-h_n)+5h_n \cdot n^{-1}\cdot n^{2(1-h_n)}.
\end{aligned}
$$
Since also \eqref{eq:asymp_dim for nonlinear Gauss} implies that $\lim_{n\to\infty} n^{2(1-h_n)}=1$, Lemma~\ref{prop: close_operators} follows.
\end{proof}

Spectral properties of $\mathcal L_{1,\infty}$ are well known. See   \cite{hensley}, for precise description  of the action  of $\mathcal L_{1,\infty}$ on the specific spaces of holomorphic fuctions defined by Babenko and Mayer and the derivation of the following result, which we need in our work, see Lemma 6 in \cite{hensley}.

\begin{theorem}
\label{t220260219}
Number $1$ is an isolated element of the spectrum of the operator $\mathcal L_{1,\infty}: \BV_N\longrightarrow\BV_N$. It is a simple eigenvalue of $\mathcal L_{1,\infty}$ with an eigenfunction given by the formula
$$
g_{1,\infty}(x)=\frac {1}{\ln2}\frac{1}{x+1}.  
$$

The rest of the spectrum of the operator $\mathcal L_{1,\infty}: \BV_N\longrightarrow\BV_N$ is contained in the disk $\mathbb D(0,\eta)$ with some $\eta\in (0,1)$. 

In particular, the spectral radius of the operator $\mathcal L_{1,\infty}: \BV_N\longrightarrow\BV_N$ is equal to $1$.
\end{theorem}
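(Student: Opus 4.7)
The plan is threefold: verify directly that $g_{1,\infty}$ is an eigenfunction with eigenvalue $1$, establish quasi-compactness of $\mathcal L_{1,\infty}$ on $\BV_{N}$ via a Lasota--Yorke inequality, and upgrade quasi-compactness to a spectral gap using mixing of the Gauss map.

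For the first step, direct computation using $|g_{k}'(x)|=(x+k)^{-2}$ and partial fractions gives
\[
\mathcal L_{1,\infty}(g_{1,\infty})(x)
=\frac{1}{\ln 2}\sum_{k=1}^{\infty}\frac{1}{(x+k)(x+k+1)}
=\frac{1}{\ln 2}\cdot\frac{1}{x+1}
=g_{1,\infty}(x).
\]
This identifies $g_{1,\infty}$ as the Gauss density and confirms $1$ is an eigenvalue. Since $\mathcal L_{1,\infty}$ is the Perron--Frobenius operator of the Gauss map $G$, it is positivity-preserving and satisfies $\int_{0}^{1}\mathcal L_{1,\infty}(f)\,dx=\int_{0}^{1}f\,dx$; combined with the existence of the positive fixed point $g_{1,\infty}$ and the continuous embedding $\BV_{N}\hookrightarrow L^{1}$, the spectral radius on $\BV_{N}$ is exactly $1$.

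For the second step I would establish, for some iterate $N\ge 1$, $\alpha\in(0,1)$, and $K<\infty$, the inequality
\[
V_{(0,1)}\bigl(\mathcal L_{1,\infty}^{N}f\bigr)\le \alpha\, V_{(0,1)}(f)+K\|f\|_{L^{1}}.
\]
The idea is to expand $\mathcal L_{1,\infty}^{N}f=\sum_{|\omega|=N}(f\circ g_{\omega})|g_{\omega}'|$ and bound the variation branch-by-branch, exploiting the uniform exponential contraction of $N$-fold compositions together with distortion control (in the spirit of Lemma~\ref{lem:distortion gomega}) to beat the unbounded branching; the jump contributions at branch endpoints are absorbed into the $L^{1}$ term. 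Combined with the compactness of $\BV_{N}\hookrightarrow L^{1}$ furnished by Helly's selection theorem, Hennion's theorem yields quasi-compactness of $\mathcal L_{1,\infty}$: its essential spectral radius is strictly less than $1$, so outside any disc $\mathbb D(0,\rho)$ with $\rho\in(\rho_{\mathrm{ess}},1)$ the spectrum is finite and consists of eigenvalues of finite algebraic multiplicity.

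To conclude, one needs to show that $1$ is the only eigenvalue on the unit circle and that it is simple. Simplicity is immediate: the duality $\int\mathcal L_{1,\infty}(f)\varphi\,dx=\int f\,(\varphi\circ G)\,dx$ shows that $\mathcal L_{1,\infty}$-fixed points correspond to $G$-invariant densities, and ergodicity of the Gauss measure forces these to be multiples of $g_{1,\infty}$; algebraic simplicity follows because $\int_{0}^{1}(\mathcal L_{1,\infty}f-f)\,dx=0$ rules out Jordan blocks above $g_{1,\infty}$. The absence of other peripheral eigenvalues follows from mixing: if $\mathcal L_{1,\infty}f=\lambda f$ with $|\lambda|=1$, then $\lambda^{n}\int f\varphi\,dx=\int f(\varphi\circ G^{n})\,dx$ for every $\varphi$, and applying the mixing convergence for the Gauss measure forces $\lambda=1$. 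The main obstacle is the Lasota--Yorke step: since the branches of $G$ accumulate at $0$ and their endpoints form the discontinuity set of $\mathcal L_{1,\infty}f$, the boundary variation must be controlled delicately---this is why the normalized space $\BV_{N}$ is the natural setting, and why the uniform operator bounds recorded before Lemma~\ref{prop: close_operators} are indispensable.
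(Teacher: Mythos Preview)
The paper does not prove Theorem~\ref{t220260219} at all: it simply records the statement and refers to Lemma~6 of Hensley~\cite{hensley} for the proof. So there is no ``paper's own proof'' to compare against, only Hensley's.

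Your sketch via a Lasota--Yorke inequality plus Hennion's theorem is a legitimate route to quasi-compactness on $\BV_{N}$, and the mixing/ergodicity arguments you outline for simplicity and the absence of other peripheral eigenvalues are standard and correct. It is worth noting, however, that this is \emph{not} the argument Hensley runs. Hensley instead passes through the Babenko--Mayer framework: the transfer operator is shown to act on a Hilbert space of holomorphic functions on a disc containing $[0,1]$, where it is nuclear (trace class) with explicitly known eigenvalues, and then one transports the spectral picture back to $\BV_{N}$ via the natural inclusion. That approach gives more---a discrete spectrum with eigenvalues decaying like $(-1)^{n}\lambda_{n}$ with $\lambda_{n}\to 0$---whereas your Lasota--Yorke route only produces an essential spectral radius strictly below $1$ without identifying the point spectrum. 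On the other hand, your approach is more robust and self-contained, not requiring the analytic extension machinery.

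The one place you flag as delicate---controlling the boundary variation when summing over infinitely many branches---is indeed the crux, and you are right that the normalization built into $\BV_{N}$ (averaging left and right limits) is what makes the jump terms telescope cleanly. This can be made to work, but you would need to carry it out rather than just gesture at it; as written, the Lasota--Yorke step remains a sketch.
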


This spectral portrait allows us to use a perturbation theorem, known in the literature as Kato-Rellich perturbation theorem. Hensley in his work  \cite{hensley} refers to the  
work \cite{cr} of Crandall and  Rabinowitz, which provides a related version. We refer also to the book \cite{OS} for the proof of the following version and additional comments.
Before formulating the Kato-Rellich Perturbation Theorem, we bring up the notion and notation of Riesz projections relevant in our context.
If $V$ is a complex Banach space, $L:V\to V$ is a bounded linear operator, and $\lambda_0\in\sigma(L)$ is an isolated point of the spectrum, then
the Riesz projection  corresponding to $\lambda$ is defined as
$$E_L(\lambda_0):=\frac{1}{2\pi i}\int_\gamma (\lambda I-L)^{-1}d\lambda.$$
where $\gamma$ is a positively oriented circle centred at $\lambda_0$, $\gamma=\partial^+(B(\lambda_0, \rho))$, with $\rho$  sufficiently  small, so that $\overline B(\lambda_0,\rho)\cap \sigma (L)=\{\lambda_0\}$. The name projection  is justified since $E_L(\lambda_0)\circ E_L(\lambda_0)=E_L(\lambda_0)$.

\begin{theorem}[Kato-Rellich Perturbation Theorem]
\label{t120260219}
Let $V$ be a complex  Banach space.
Let  $L_0:V\to V$ be a bounded linear operator for which $\lambda_0\in\mathbb C$ is an isolated point in the spectrum $\sigma(L)$ and it is a simple eigenvalue of $L$.

Then, there exists $\delta>0$ such that if $L:V\to V$ is a bounded linear operator $L:V\to V$ with $\|L-L_0\|<\delta$ then the intersection
$$
\sigma(L)\cap \mathbb D_{\mathbb C}(\lambda_0,\delta)
$$
is a singleton, whose only element which we denote by $\lambda_L$, is a simple isolated eigenvalue of $L$. 

Furthermore, after decreasing $\delta>0$ if necessary, the following statements hold. 

\ben
\item The function 
$$
B(L_0,\delta)\ni L\longmapsto \lambda_L\in \mathbb C
$$
is holomorphic.

\item The function 
$$
B(L_0,\delta)\ni L\longmapsto E_L(\lambda_L)
$$
taking values in the Banach space of all bounded linear operators from $V$ to $V$ is holomorphic.

\item The Riesz projection corresponding to the spectral set $\sigma(L)\setminus \{\lambda_L\}$ of $L$ is also a holomorphic function of the operator $L\in B(L_0,\delta)$.

\item For every vector  $v\in V\setminus (E_{L_0}(\lambda_{L_0}))^{-1}(0)$  and every $L$ with $L\in B(L_0,\delta)$  the vector 
$$
v(L):=E_L(\lambda_L)(v)
$$ 
is an eigenvector of $L:V\to V$ corresponding to the eigenvalue $\lambda_L$ and the function
$$
B(L_0,\delta)\ni L\longmapsto v(L)\in V
$$
is holomorphic. 
\een
\end{theorem}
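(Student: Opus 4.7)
The plan is to adopt the classical Riesz projection approach to operator perturbation. Since $\lambda_0$ is isolated in $\sigma(L_0)$, I fix a positively oriented circle $\gamma = \partial B(\lambda_0, \rho)$ so small that $\overline{B(\lambda_0,\rho)} \cap \sigma(L_0) = \{\lambda_0\}$, and I set $M := \sup_{\zeta\in\gamma}\|(\zeta I - L_0)^{-1}\|$, which is finite because the resolvent is continuous on the compact curve $\gamma$. For any $L$ with $\|L - L_0\| < 1/M$, I construct the resolvent $(\zeta I - L)^{-1}$ along $\gamma$ via the convergent Neumann series
\[
(\zeta I - L)^{-1} = (\zeta I - L_0)^{-1}\sum_{k=0}^\infty \bigl[(L - L_0)(\zeta I - L_0)^{-1}\bigr]^k,
\]
which converges uniformly in $\zeta \in \gamma$ and is term-by-term holomorphic in $L$. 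Integrating against $\frac{1}{2\pi i}\,d\zeta$ then produces the operator
\[
E(L) := \frac{1}{2\pi i}\int_\gamma (\zeta I - L)^{-1}\,d\zeta,
\]
whose holomorphic dependence on $L$ follows immediately from uniform convergence of the above series.

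Next, the standard resolvent identity combined with two concentric circles enclosing $\lambda_0$ gives $E(L)^2 = E(L)$, so that $E(L)$ is a bounded projection; by construction it commutes with $L$, rendering $\mathrm{Range}(E(L))$ and $\mathrm{Ker}(E(L))$ invariant under $L$, with $\sigma\bigl(L\big|_{\mathrm{Range}(E(L))}\bigr) = \sigma(L) \cap B(\lambda_0, \rho)$ and $\sigma\bigl(L\big|_{\mathrm{Ker}(E(L))}\bigr) = \sigma(L)\setminus B(\lambda_0,\rho)$ by the usual Dunford functional calculus argument. To see that $E(L)$ has the same rank as $E(L_0)$, and therefore also one-dimensional range, I introduce the intertwiner
\[
W(L) := E(L)E(L_0) + (I - E(L))(I - E(L_0)),
\]
which satisfies $W(L_0) = I$ and the crucial identity $W(L)E(L_0) = E(L)W(L)$; since $\|W(L) - I\| \le 2\|E(L) - E(L_0)\|$, the operator $W(L)$ is invertible for $L$ sufficiently close to $L_0$ by a Neumann series argument. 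Consequently $E(L)$ is similar to $E(L_0)$, hence has rank one, and the set $\sigma(L) \cap B(\lambda_0,\rho)$ collapses to a single simple eigenvalue which I denote $\lambda_L$. The assertion (3) is then immediate, since the Riesz projection corresponding to the complementary spectral set is $I - E(L)$, which is holomorphic in $L$.

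To establish the holomorphicity of $\lambda_L$ and the eigenvector, I fix any vector $v$ with $E(L_0)v \neq 0$ and any continuous linear functional $\varphi \in V^*$ with $\varphi(E(L_0)v) \neq 0$; both conditions persist for $L$ in a neighborhood of $L_0$ by continuity of $E(L)$. Since $L\,E(L)v = \lambda_L E(L)v$ on the one-dimensional eigenspace, applying $\varphi$ and dividing yields
\[
\lambda_L = \frac{\varphi\bigl(L\,E(L)v\bigr)}{\varphi\bigl(E(L)v\bigr)},
\]
a ratio of two holomorphic scalar-valued functions of $L$ with non-vanishing denominator in a neighborhood of $L_0$, proving (1) and (2); item (4) follows because $v(L) = E(L)v$ is the composition of the holomorphic map $L \mapsto E(L)$ with the bounded linear evaluation functional $T \mapsto Tv$. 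The main technical obstacle is the rank-stability step: in infinite-dimensional Banach spaces one cannot invoke continuity of dimension, so the intertwiner $W(L)$ (rather than, say, any continuity argument on projections) is what makes this rigorous; verifying both the estimate $\|W(L)-I\| \le 2\|E(L)-E(L_0)\|$ and the similarity relation $W(L)^{-1}E(L)W(L) = E(L_0)$ constitutes the computational core of the proof.
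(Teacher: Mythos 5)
Your proposal is essentially the complete classical argument, and it is correct; note, however, that the paper does not prove this theorem at all --- it is quoted as a known result, with the reader referred to the cited literature (the book \cite{OS} and the related version in \cite{cr}) for the proof. So there is nothing in the paper to compare against line by line; what you have written is the standard Dunford--Riesz functional-calculus proof that those references give: Neumann-series holomorphy of the resolvent on a separating contour, holomorphy of the Riesz projection $E(L)$ by integration, rank stability via the intertwining operator $W(L)=E(L)E(L_0)+(I-E(L))(I-E(L_0))$, and the quotient formula $\lambda_L=\varphi(L\,E(L)v)/\varphi(E(L)v)$ for the eigenvalue. One small inaccuracy: the estimate $\|W(L)-I\|\le 2\|E(L)-E(L_0)\|$ is not right as stated, since $W(L)-I=2\big(E(L)-E(L_0)\big)E(L_0)-\big(E(L)-E(L_0)\big)$, which gives the bound $\big(2\|E(L_0)\|+1\big)\|E(L)-E(L_0)\|$; the constant $2$ would require $\|E(L_0)\|\le 1/2$, which is false for a nonzero projection. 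This does not affect the argument, because all you need is that $\|W(L)-I\|\to 0$ as $L\to L_0$, so $W(L)$ is invertible for $L$ sufficiently close to $L_0$ and the similarity $W(L)^{-1}E(L)W(L)=E(L_0)$ forces $E(L)$ to have rank one. You should also record the routine final adjustment that, by upper semicontinuity of the spectrum, $\lambda_L\to\lambda_0$ as $L\to L_0$, so that after shrinking $\delta$ the single disk $\mathbb D_{\mathbb C}(\lambda_0,\delta)$ can serve simultaneously as the neighborhood of $L_0$ in operator norm and as the region isolating $\lambda_L$, as the statement requires.
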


\begin{rem}
In the above theorem the norm $\|L-L_0\|$ is  the operator norm in the space of bounded linear operators acting on $V$. The ball $B(L_0,\delta)$ is also taken with respect to this norm.    
\end{rem}

\begin{rem}
In the above perturbation theorem a  notion of a holomorphic function defined on a complex Banach space appears.  Let us recall one possible characterization of holomorphic maps in this setting.  Let two complex Banacha spaces $V$ an $W$ (finite or infinite- dimensional), equippped with norms $\|\cdot\|_V$ and $\|\cdot \|_W$ respectively, be given. Let $U\subset V$ be an open subset of $V$.  A function $f:U\to W$ is  holomorphic if 
and only if $f$ is continuous and $f|_{U\cap \tilde V}$ is holomorphic for every finite-dimensional linear subspace $\tilde V\subset V$. 
We refer the reader to the book \cite{mujica} for a full treatment and to the book \cite{OS} for a concise summary.
\end{rem}

Since the function $L\mapsto \lambda_L$ is  holomorphic in a neighbourhood of $\mathcal L_{1,\infty}$, as an immediate consequence of Theorem~\ref{t120260219} and \ref{t220260219}, along with \eqref{120260219} and \eqref{220260219}, we get the following.

\begin{cor}[perturbation of $\mathcal L_{1,\infty}$]\label{cor:perturb}
There exist $\delta\in (0,+\infty)$ and $C\in (0,\infty)$ such that if $t\in \mathbb C$ with $\re (t)>\frac 3 4$ is sufficiently close to $1$ and $n\in\N$ (including $\infty$) is sufficiently large, then the the following statements hold.

\ben
\item The operator $\mathcal L_{t,n}:\BV_N\to\BV_N$ has a simple isolated eigenvalue $\lambda_{t,n}$, satisfying 
$$
|\lambda_{t,n}-1|
\le C\|\mathcal L_{t,n}-\mathcal L_{1,\infty}\|_{\BV}
\le C\big(n^{-1}+|t-1|\big)
$$ 
and 
$$
\sigma\big(\mathcal L_{t,n}\big)\cap B(1,\delta)=\{\lambda_{t,n}\}.
$$

\item 
For every $g\in\BV_N$ the projection 
\begin{equation}\label{eq:projection}
E_{{\mathcal L}_{t,n}}(\lambda_{t,n})(g)
\end{equation}
is an eigenfunction of operator $\mathcal L_{t,n}:\BV_N\to\BV_N$ corresponding to $\lambda_{t,n}$.

\item
$$\norm{E_{\mathcal L_{t,n}}(\lambda_{t,n})-E_{\mathcal L_{1,\infty}}(\lambda_{1,\infty})}_{\BV}
\le C\|\mathcal L_{t,n}-\mathcal L_{1,\infty}\|_{\BV}
\le C\big(n^{-1}+|t-1|\big). 
$$
Consequently,
$$
\|E_{{\mathcal L}_{t,n}}(\lambda_{t,n})(g)-E_{{\mathcal L}_{1,\infty}}(\lambda_{1,\infty})(g)\|_{\BV}
\le C\big(n^{-1}+|t-1|\big)\|g\|_{\BV}.
$$
In particular, taking $g={\bf 1}:= {\bf 1}_{(0,1)}$, we get
$$
\|E_{{\mathcal L}_{t,n}}(\lambda_{t,n})({\bf 1})-E_{{\mathcal L}_{1,\infty}}(\lambda_{1,\infty})({\bf 1})\|_{\BV}
\le C\big(n^{-1}+|t-1|\big).
$$
\een
 \end{cor}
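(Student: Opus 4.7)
The plan is a direct application of the Kato--Rellich Perturbation Theorem (Theorem~\ref{t120260219}) to the unperturbed operator $L_0 := \mathcal L_{1,\infty}\colon \BV_N \to \BV_N$ at its distinguished eigenvalue $\lambda_0 := 1$. The hypotheses of Theorem~\ref{t120260219} are met by Theorem~\ref{t220260219}, which asserts that $1$ is a simple isolated eigenvalue of $\mathcal L_{1,\infty}$, with the remainder of the spectrum contained in some disk $\mathbb D(0,\eta)$ with $\eta<1$. Theorem~\ref{t120260219} therefore furnishes some $\delta>0$ such that every bounded operator $L\colon \BV_N \to \BV_N$ with $\|L-L_0\|_{\BV}<\delta$ has a unique eigenvalue $\lambda_L$ in $\mathbb D(1,\delta)$, simple and isolated, and such that both the maps $L\mapsto \lambda_L$ and $L\mapsto E_L(\lambda_L)$ are holomorphic on $B(L_0,\delta)$.

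Next, I would verify that once $n$ is large enough and $t\in\mathbb C$ with $\re(t)>\tfrac34$ is sufficiently close to $1$, the operator $\mathcal L_{t,n}$ lies inside the ball $B(L_0,\delta)$. Splitting
\[
\|\mathcal L_{t,n}-\mathcal L_{1,\infty}\|_{\BV}
\le \|\mathcal L_{t,n}-\mathcal L_{t,\infty}\|_{\BV}
+\|\mathcal L_{t,\infty}-\mathcal L_{1,\infty}\|_{\BV}
\]
and invoking \eqref{120260219} and \eqref{220260219}, the first summand is bounded by $8|t|n^{1-2\re t}$ and the second by $44|t-1|$; for $t$ near $1$ both terms are $O(n^{-1}+|t-1|)$, giving the key uniform estimate
\[
\|\mathcal L_{t,n}-\mathcal L_{1,\infty}\|_{\BV}
\le C'\bigl(n^{-1}+|t-1|\bigr)
\]
for some absolute constant $C'$.

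With this estimate in hand, part~(1) follows because holomorphic Banach-valued maps are locally Lipschitz in a neighborhood of any point; thus the map $L\mapsto \lambda_L$ satisfies
\[
|\lambda_{t,n}-1|\le C\|\mathcal L_{t,n}-\mathcal L_{1,\infty}\|_{\BV}
\le C\bigl(n^{-1}+|t-1|\bigr),
\]
and uniqueness inside $B(1,\delta)$ gives $\sigma(\mathcal L_{t,n})\cap B(1,\delta)=\{\lambda_{t,n}\}$. Part~(2) is immediate from statement~(4) of Theorem~\ref{t120260219}: since $\lambda_{t,n}$ is simple, the range of the Riesz projection $E_{\mathcal L_{t,n}}(\lambda_{t,n})$ is one-dimensional and consists entirely of eigenvectors for $\lambda_{t,n}$, so $E_{\mathcal L_{t,n}}(\lambda_{t,n})(g)$ is such an eigenfunction for each $g\in\BV_N$. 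For part~(3), the same local Lipschitz property applied to the holomorphic map $L\mapsto E_L(\lambda_L)$ (which takes values in the Banach space of bounded operators on $\BV_N$) yields
\[
\|E_{\mathcal L_{t,n}}(\lambda_{t,n})-E_{\mathcal L_{1,\infty}}(1)\|_{\BV}
\le C\|\mathcal L_{t,n}-\mathcal L_{1,\infty}\|_{\BV},
\]
from which the final bound on $\|E_{\mathcal L_{t,n}}(\lambda_{t,n})(g)-E_{\mathcal L_{1,\infty}}(1)(g)\|_{\BV}$ follows by applying the inequality $\|T(g)\|_{\BV}\le \|T\|_{\BV}\|g\|_{\BV}$, and taking $g=\mathbf{1}$ gives the stated special case. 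There is no serious obstacle here; the only nontrivial ingredient is the holomorphy claim of Theorem~\ref{t120260219}, which has already been granted, together with the operator-norm continuity estimates \eqref{120260219} and \eqref{220260219} quoted from \cite{hensley}.
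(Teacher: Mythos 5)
Your proposal is correct and follows essentially the same route as the paper, which presents this corollary as an immediate consequence of the Kato--Rellich theorem applied to $\mathcal L_{1,\infty}$ at the eigenvalue $1$ (hypotheses supplied by Theorem~\ref{t220260219}), the local Lipschitz continuity of the holomorphic maps $L\mapsto\lambda_L$ and $L\mapsto E_L(\lambda_L)$, and the operator-norm bound $\|\mathcal L_{t,n}-\mathcal L_{1,\infty}\|_{\BV}\le C(n^{-1}+|t-1|)$ obtained by the same triangle-inequality splitting via \eqref{120260219} and \eqref{220260219}. No discrepancies to report.
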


Closely related to Riesz projections is the notion of the conformal measures we have dealt "all the time" with in our paper. We are going to provide this relation now.

For every $t\in (3/4, +\infty)$ and $n\in\mathbb N$ consider the operator $\mathcal L_{t,n}$. This  operator corresponds to the finite Iterated Function System generated by the maps $g_1,\dots g_n$. It also acts continuously on the space of continuous functions $C([0,1])$.  The  conjugate (dual) operator 
$$
\mathcal L^*_{t,n}:C^*([0,1])\longrightarrow C^*([0,1])
$$
is defined by the formula
$$
\mathcal L^*_{t,n}(\nu)(g)=\nu\big(\mathcal L_{t,n}g\big).
$$
It is known (see \cite{CLUW, GDMS, MRUII, OS}) that for all $n\ge 2$ (including $\infty$) the spectral radius of this operator is its eigenvalue converging to $1$ as $n\to\infty$ and the corresponding eigenspace is $1$-dimensional vector space spanned over $\mathbb C$ by the real, positive, bounded, separated from zero real analytic function $\rho$.

Hence, 
$$
\rho|_{(0,1)}, \mathcal L_{t,n}\rho|_{(0,1)}\in \BV_N,
$$
and we conclude from Corollary~\ref{cor:perturb} that these spectral radii are for all $t>3/4$ sufficiently close to $1$ and all $n\in\mathbb N$ sufficiently large equal to the eigenvalues $\lambda_{t,n}$ and the functions $\rho|_{(0,1)}$ are complex multiples of $E_{{\mathcal L}_{t,n}}(\lambda_{t,n})({\bf 1})$. It also follows from the above sources that for all such $t$ and $n$ the number $\lambda_{t,n}$ is an eigenvalue of the conjugate operator $\mathcal L^*_{t,n}:C^*([0,1])\longrightarrow C^*([0,1])$, its corresponding eigenspace is $1$-dimensional, and
there exists a unique Borel probability eigenmeasure for this eigenvalue. We denote it by $m_{t,n}$. In a formula:
$$
\mathcal L_{t,n}^*(m_{t,n})=\lambda_{t,n}m_{t,n}.
$$
Therefore there exists a unique eigenfunction $\rho_{t,n}\in \BV_N$  of $\mathcal L_{t,n}:\BV_N\to\BV_N$ corresponding to the eigenvalue $\lambda_{t,n}$ such that 
\beq
\label{120260224}
m_{t,n}(\rho_{t,n})=1.
\eeq
In addition, the function $\rho_{t,n}$ is a real, uniformly bounded, positive, uniformly separated from zero real-analytic function. The measure $m_{t,n}$ is called $t$-conformal for the iterated function system generated by the maps $g_1,\dots,g_n$. In particular, $m_{h_n,n}$ is just the familiar conformal measure we have been dealing with and it is the normalized $h_n$-dimensional Hausdorff measure on $J_n$.

We shall prove the following.

\begin{prop}[relation between Riesz projection and the functional $m_{t,n}$]\label{prop:projection_conf_measure}
If $t\in (3/4, +\infty)$ is sufficiently close to $1$ and $n\in\N$ is sufficiently large (including $\infty$), then 
$$
E_{\mathcal L_{t,n}}(\lambda_{t,n})(g)=m_{t,n}(g)\cdot \rho_{t,n}
$$
for every $g\in\BV_N$. 

In particular,  
$$
\rho_{t,n}=E_{\mathcal L_{t,n}}(\lambda_{t,n})({\bf 1})
$$ 
and
$$
\|\rho_{t,n}-\rho_{1,\infty}\|_\BV
\le C\big(n^{-1}+|t-1|\big),
$$
where $C$ is the constant coming from Corollary ~\ref{cor:perturb}.
\end{prop}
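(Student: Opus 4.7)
The plan is to show that the rank-one Riesz projection must be of the form $g \mapsto \phi_{t,n}(g)\, \rho_{t,n}$ for a continuous linear functional $\phi_{t,n}$ on $\BV_N$, and then identify $\phi_{t,n}$ with $m_{t,n}$ via duality. First, by Corollary~\ref{cor:perturb}(1), $\lambda_{t,n}$ is a simple isolated eigenvalue of $\mathcal L_{t,n}$, so the range of the associated Riesz projection $E_{\mathcal L_{t,n}}(\lambda_{t,n})$ coincides with the one-dimensional eigenspace $\mathbb C\cdot \rho_{t,n}$. Continuity of the projection on $\BV_N$ immediately gives the existence of a unique $\phi_{t,n}\in \BV_N^*$ with
\[
E_{\mathcal L_{t,n}}(\lambda_{t,n})(g) = \phi_{t,n}(g)\, \rho_{t,n} \quad \text{for every } g\in \BV_N.
\]

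Next, I would exploit the fact that the Riesz projection, being defined as a contour integral of the resolvent $(\lambda I - \mathcal L_{t,n})^{-1}$, commutes with $\mathcal L_{t,n}$. Applied to an arbitrary $g\in \BV_N$ this yields $\lambda_{t,n}\phi_{t,n}(g)\rho_{t,n} = \phi_{t,n}(\mathcal L_{t,n} g)\rho_{t,n}$, so
\[
\phi_{t,n}\circ \mathcal L_{t,n} = \lambda_{t,n}\phi_{t,n},
\]
i.e., $\phi_{t,n}$ is an eigenfunctional of $\mathcal L_{t,n}^*$ for the eigenvalue $\lambda_{t,n}$. Viewing $m_{t,n}$ as an element of $\BV_N^*$ through the continuous embedding $\BV_N\hookrightarrow L^\infty([0,1])$ (which is justified by $\|f\|_\infty\le \|f\|_\BV$), the cited one-dimensionality of the $\lambda_{t,n}$-eigenspace of $\mathcal L_{t,n}^*$ forces $\phi_{t,n} = c_{t,n}\, m_{t,n}$ for some scalar $c_{t,n}$. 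To determine $c_{t,n}$, I would apply the projection to $\rho_{t,n}$ itself: since $\rho_{t,n}$ already lies in the eigenspace, idempotency of $E_{\mathcal L_{t,n}}(\lambda_{t,n})$ yields $\phi_{t,n}(\rho_{t,n}) = 1$, and combining this with the normalization $m_{t,n}(\rho_{t,n}) = 1$ from \eqref{120260224} gives $c_{t,n} = 1$, proving the main identity.

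The two consequences follow quickly. Setting $g = {\bf 1}$ and using $m_{t,n}({\bf 1}) = 1$ gives $\rho_{t,n} = E_{\mathcal L_{t,n}}(\lambda_{t,n})({\bf 1})$; the analogous formula $\rho_{1,\infty} = E_{\mathcal L_{1,\infty}}(\lambda_{1,\infty})({\bf 1})$ together with Corollary~\ref{cor:perturb}(3) applied to $g = {\bf 1}$ then delivers the stated norm estimate. The main subtlety, rather than a genuine obstacle, is ensuring that the $\lambda_{t,n}$-eigenspace of the dual operator $\mathcal L_{t,n}^*$ on $\BV_N^*$ really is one-dimensional and spanned by the conformal measure $m_{t,n}$; this is guaranteed by simplicity of $\lambda_{t,n}$ as an eigenvalue of $\mathcal L_{t,n}$ (so its algebraic multiplicity equals one on both $\BV_N$ and its dual) together with the uniqueness of the conformal probability measure on $J_n$ cited in the excerpt via the references on conformal iterated function systems.
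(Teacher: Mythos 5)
Your proposal is correct and follows essentially the same route as the paper: write the rank-one Riesz projection as $g\mapsto \phi(g)\cdot(\text{eigenvector})$, use commutation of the projection with $\mathcal L_{t,n}$ to see that $\phi$ is an eigenfunctional of $\mathcal L_{t,n}^*$ for $\lambda_{t,n}$, invoke one-dimensionality of that dual eigenspace to get $\phi=c\,m_{t,n}$, and fix $c=1$ by a normalization (the paper uses $\ell(\mathbf 1)=1$ where you use idempotency on $\rho_{t,n}$, a cosmetic difference). The subtlety you flag about the dual eigenspace is glossed over at the same level in the paper's own argument.
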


\begin{proof}
Abbreviate
$$
E_{t,n}:=E_{\mathcal L_{t,n}}(\lambda_{t,n}).
$$ 
Because of Corollary ~\ref{cor:perturb} there exists a function $\ell:\BV_N\to \mathbb C$ such that
$$
E_{t,n}(g)=\ell(g)E_{t,n}({\bf 1})
$$
for every $g\in\BV_N$. Obviously, $\ell$ is a linear functional and
$$
\ell({\bf 1})=1.
$$
Also,
$$
\ell\big(\mathcal L_{t,n}g\big)E_{t,n}({\bf 1})
=E_{t,n}(\mathcal L_{t,n}g)
=\mathcal L_{t,n}E_{t,n}(g)
=\lambda_{t,n}E_{t,n}(g)
=\lambda_{t,n}\ell(g)E_{t,n}({\bf 1}).
$$
Thus,
$$
\mathcal \ell(L_{t,n}g)=\lambda_{t,n}\ell(g).
$$
So, $\ell$ is a complex multiple of $m_{t,n}$, i.e. there exists a complex number $c$ such that
$$
\ell=cm_{t,n}.
$$
Hence,
$$
1=\ell({\bf 1})=cm_{t,n}({\bf 1})=c.
$$
Thus, 
$$
\ell=m_{t,n},
$$
yielding
$$
E_{t,n}(g)=m_{t,n}(g)E_{t,n}({\bf 1})
$$
for every $g\in\BV_N$. So,
$$
\rho_{t,n}
=E_{t,n}(\rho_{t,n})
=m_{t,n}(\rho_{t,n})E_{t,n}({\bf 1})
=E_{t,n}({\bf 1}).
$$
Therefore, the first two assertion of our propositions are proved. Having it the third (last) assertion directly follows from item (3) in Corollary~\ref{cor:perturb}. We are done.
\end{proof}

\begin{prop}\label{prop:conformal_measures_close}
If $t\in (3/4, +\infty)$ is sufficiently close to $1$ and $n\in\N$ is sufficiently large (including $\infty$), then
$$
\big|m_{1,\infty}(g)- m_{t,n}(g)\big|
\le C\left(2+4\ln 2 {\|E_{1,\infty}\|_{\BV}}\right)\big(n^{-1}+|t-1|\big)\|g\|_{\BV}
$$
for every $g\in \BV_N$. 

Treating, if needed, $m_{1,\infty}$ and $m_{t,n}$ as elements of the conjugate Banach space $\BV_N^*$ consisting of  bounded linear functionals defined on the Banach space $\BV_N$, this inequality can also be written as
$$
\big\|m_{1,\infty}- m_{t,n}\big\|_{\BV_N^*}
\le C\left(2+ 4\ln 2{\|E_{1,\infty}\|_{\BV}}\right)\big(n^{-1}+|t-1|\big).
$$
\end{prop}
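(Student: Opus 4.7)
The plan is to exploit the factorization $E_{t,n}(g)=m_{t,n}(g)\,\rho_{t,n}$ (and its counterpart at $t=1$, $n=\infty$) supplied by Proposition~\ref{prop:projection_conf_measure}, in order to extract the scalar $m_{t,n}(g)-m_{1,\infty}(g)$ from the already-controlled displacement of the Riesz projections $E_{t,n}-E_{1,\infty}$. Specifically, I would algebraically rearrange
$$E_{t,n}(g)-E_{1,\infty}(g)=\bigl(m_{t,n}(g)-m_{1,\infty}(g)\bigr)\rho_{1,\infty}+m_{t,n}(g)\bigl(\rho_{t,n}-\rho_{1,\infty}\bigr),$$
and note that the left-hand side is small in $\BV$-norm by Corollary~\ref{cor:perturb}(3), while $\|\rho_{t,n}-\rho_{1,\infty}\|_{\BV}\le C(n^{-1}+|t-1|)$ is the content of Proposition~\ref{prop:projection_conf_measure}. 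The key point is that $\rho_{1,\infty}$ is explicit (Theorem~\ref{t220260219}), so ``dividing by $\rho_{1,\infty}$'' is unambiguous.

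I would then evaluate the rearranged identity at $x=0^+$, which is a continuous linear functional on $\BV_N$ of norm at most one; using $\rho_{1,\infty}(0^+)=1/\ln 2$ this yields
$$\frac{|m_{t,n}(g)-m_{1,\infty}(g)|}{\ln 2}\le \|E_{t,n}(g)-E_{1,\infty}(g)\|_{\BV}+|m_{t,n}(g)|\cdot\|\rho_{t,n}-\rho_{1,\infty}\|_{\BV}.$$
For the leftover factor $|m_{t,n}(g)|$ I would again use the factorization: $m_{t,n}(g)=E_{t,n}(g)(0^+)/\rho_{t,n}(0^+)$, together with the perturbative lower bound $\rho_{t,n}(0^+)\ge 1/(2\ln 2)$, valid for $t$ close to $1$ and $n$ large since $\|\rho_{t,n}-\rho_{1,\infty}\|_\infty\le\|\rho_{t,n}-\rho_{1,\infty}\|_{\BV}$ is small by Proposition~\ref{prop:projection_conf_measure}. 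This gives $|m_{t,n}(g)|\le 2\ln 2\,\|E_{t,n}\|_{\BV}\|g\|_{\BV}$, and the triangle inequality combined with Corollary~\ref{cor:perturb}(3) yields $\|E_{t,n}\|_{\BV}\le\|E_{1,\infty}\|_{\BV}+C(n^{-1}+|t-1|)$.

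Plugging these estimates back and multiplying through by $\ln 2$ produces two contributions which should assemble into the announced coefficient $C\bigl(2+\|E_{1,\infty}\|_{\BV}/\ln 2\bigr)$ in front of $(n^{-1}+|t-1|)\|g\|_{\BV}$; the final reformulation in terms of the $\BV_N^*$-norm is then nothing but the defining supremum over the unit ball of $\BV_N$. The main obstacle will be the careful bookkeeping of the three ``small'' quantities $\|E_{t,n}(g)-E_{1,\infty}(g)\|_{\BV}$, $\|\rho_{t,n}-\rho_{1,\infty}\|_{\BV}$, and the perturbative correction to $\|E_{t,n}\|_{\BV}$, together with threading the two ``$\ln 2$'' factors (from $\rho_{1,\infty}(0^+)$ and from the lower bound on $\rho_{t,n}(0^+)$) through at their natural places; a slightly larger, messier constant would follow immediately from the argument above, but obtaining precisely the stated $C\bigl(2+\|E_{1,\infty}\|_{\BV}/\ln 2\bigr)$ requires this care.
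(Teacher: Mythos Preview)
Your approach is correct and close in spirit to the paper's, but the two proofs differ in their algebraic organization and in how the ``division by $\rho$'' is made quantitative. The paper writes the difference of measures directly as a difference of quotients,
\[
m_{1,\infty}(g)-m_{t,n}(g)=E_{1,\infty}(g)\Bigl(\tfrac{1}{\rho_{1,\infty}}-\tfrac{1}{\rho_{t,n}}\Bigr)+\bigl(E_{1,\infty}(g)-E_{t,n}(g)\bigr)\tfrac{1}{\rho_{t,n}},
\]
so that the prefactor in the first term is $E_{1,\infty}(g)$ rather than the scalar $m_{t,n}(g)$; this avoids your separate step of bounding $|m_{t,n}(g)|$ via $E_{t,n}(g)(0^+)/\rho_{t,n}(0^+)$. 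The paper then evaluates not at the fixed point $0^+$ but at a point $x_{t,n}$ (depending on $t,n$) where $\rho_{t,n}(x_{t,n})\ge 1/2$, whose existence follows from the normalization $\int\rho_{t,n}\,dm_{t,n}=1$; your lower bound on $\rho_{t,n}(0^+)$ comes instead from the perturbation estimate $\|\rho_{t,n}-\rho_{1,\infty}\|_{\BV}$ being small. Both choices are legitimate and lead to the same qualitative conclusion; the paper's route makes the constant $C\bigl(2+\|E_{1,\infty}\|_{\BV}/\ln 2\bigr)$ fall out directly, whereas your bookkeeping (as you correctly anticipate) naturally produces a constant with a different shape. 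Incidentally, since here $t$ is real and $m_{t,n}$ is a probability measure, you could have used simply $|m_{t,n}(g)|\le\|g\|_\infty\le\|g\|_{\BV}$ in place of your detour through $E_{t,n}(g)(0^+)/\rho_{t,n}(0^+)$.
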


\begin{proof}
We again use the abbreviation
$$
E_{t,n}:=E_{\mathcal L_{t,n}}(\lambda_{t,n}).
$$ 
Let $g\in  BV_N$. According to  Proposition~\ref{prop:projection_conf_measure}, we have
\begin{equation}
\label{eq:measure_difference}
\begin{aligned}
m_{1,\infty}(g)&- m_{t,n}(g)
=E_{1,\infty}({g})\cdot\frac{1}{\rho_{1,\infty}}-E_{t,n}({g})\cdot \frac{1}{\rho_{t,n}}=
\\
&=E_{1,\infty}({g})\cdot\frac{1}{\rho_{1,\infty}}-E_{1,\infty}({g})\cdot \frac{1}{\rho_{t,n}}
+E_{1,\infty} ({g})\cdot\frac{1}{\rho_{t,n}}-E_{t,n}({g})\cdot \frac{1}{\rho_{t,n}}
\\
&=E_{1,\infty}({g})\left(\frac{1}{\rho_{1,\infty}}- \frac{1}{\rho_{t,n}}\right)
+\big(E_{1,\infty} ({g})-E_{t,n}({g})\big)\frac{1}{\rho_{t,n}}.
\end{aligned}
\end{equation}

Now, it follows from \eqref{120260224} that there exists $x_{t,n}\in (0,1)$ such that
\beq
\label{120260226}
\rho_{t,n}(x_{t,n})\ge 1/2.
\eeq
Then, using also Proposition~\ref{prop:projection_conf_measure} and the fact that
$$
\rho_{1,\infty}(x)=\frac {1}{\ln 2}\frac{1}{x+1}\ge \frac{1}{2\ln 2},
$$
we get 
$$
\bal
\left|\frac{1}{\rho_{1,\infty}(x_{t,n})}- \frac{1}{\rho_{t,n}(x_{t,n})}\right|
&=\left|\frac{\rho_{t,n}(x_{t,n})-\rho_{1,\infty}(x_{t,n})}{\rho_{t,n}(x_{t,n}) \rho_{1,\infty}(x_{t,n})}\right|
\\
&\le 4 \ln 2\left|\rho_{t,n}(x_{t,n})-\rho_{1,\infty}(x_{t,n})\right|
\le 4 \ln 2\|\rho_{t,n}-\rho_{1,\infty}\|_\BV
\\
&\le 4 C\cdot {\ln 2}\big(n^{-1}+|t-1|\big).
\eal
$$
Hence,
\beq
\label{220260224}
\bal
\Bigg|E_{1,\infty}({g})(x_{t,n})&\left(\frac{1}{\rho_{1,\infty}}- \frac{1}{\rho_{t,n}}\right)(x_{t,n})\Bigg|=
\\
&=\big|E_{1,\infty}({g})(x_{t,n})\big|
\left|\frac{1}{\rho_{1,\infty}(x_{t,n})}- \frac{1}{\rho_{t,n}(x_{t,n})}\right|
\\
&\le 4 {C}{\ln 2}\|E_{1,\infty}({g})\|_{\BV}\big(n^{-1}+|t-1|\big)
\\
&\le  4 {C}{\ln 2}\|E_{1,\infty}\|_{\BV}\big(n^{-1}+|t-1|\big)\|g\|_{\BV}.
\eal
\eeq

Using item (3) in Corollary~\ref{cor:perturb} and \eqref{120260226}, we get
$$
\bal
\left|\big(E_{1,\infty} ({g})-E_{t,n}({g})\big)(x_{t,n})\frac{1}{\rho_{t,n}(x_{t,n})}\right|
&=\Big|\big(E_{1,\infty} ({g})-E_{t,n}({g})\big)(x_{t,n})\Big|\left|\frac{1}{\rho_{t,n}(x_{t,n})}\right|
\\
&\le 2\Big\|\big(E_{1,\infty} ({g})-E_{t,n}({g})\big)\Big\|_{\BV}
\\
&\le 2C\big(n^{-1}+|t-1|\big)\|g\|_{\BV}.
\eal
$$
Inserting this and \eqref{220260224} to \eqref{eq:measure_difference}, we get
$$
\big|m_{1,\infty}(g)- m_{t,n}(g)\big|
\le \left(2+ 4 \ln 2{\|E_{1,\infty}\|_{\BV}}\right)C\big(n^{-1}+|t-1|\big)\|g\|_{\BV}.
$$
The proof of Proposition~\ref{prop:conformal_measures_close} is complete.
\end{proof}

\section{Some Auxiliary Estimates in the Banach space $\BV_N$.}
We consider finite and infinite sequences
$\omega\in\mathbb N^k$, $\omega\in \mathbb N^{\mathbb N}$.
We also use the notation $\mathbb N^*$ to denote the collection of all finite sequences of positive integers, i.e.,
$$\mathbb N^* =\bigcup_{k\in\mathbb N}\mathbb N^k$$
We recall that for each such a finite sequence 
$\omega=(\omega_1,\dots \omega_k)$, we 
denoted
\begin{equation}\label{eq:g_omega}
 g_\omega=g_{\omega_1}\circ\dots \circ g_{\omega_k}.
\end{equation}

\begin{lemma}\label{lem:dist1}
There exists a constant $c\in (0,+\infty)$ such that
 \begin{equation}\label{eq:dist1}
|g'_\omega(z)-g'_\omega(w)|\le c|g_\omega'(z)|\cdot |z-w|
\end{equation}
for every $\omega\in \mathbb N^*$ and all $z,w\in [0,1]$.

In particular
\beq
\label{120260310}
\frac{|g'_\omega(z)|}{|g'_\omega(w)|}
\le \frac{\sup(|g'_\omega|)}{\inf(|g'_\omega|)}
\le 1+c.
\eeq
for every $\omega\in \mathbb N^*$ and all $z,w\in [0,1]$.
\end{lemma}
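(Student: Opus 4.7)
The plan is to prove the estimate \eqref{eq:dist1} via a standard logarithmic distortion argument: decompose $\log|g'_\omega|$ as a telescoping sum, bound each increment using the Lipschitz norm of $\log|g'_k|$, and absorb the total increment by geometric decay of the contraction rates. The inequality \eqref{120260310} is then a direct consequence of \eqref{eq:dist1}.

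Concretely, I would carry out the following steps. First, a direct computation gives
\[
\frac{d}{dx}\log|g'_k(x)| = -\frac{2}{x+k},
\]
so $\log|g'_k|$ is Lipschitz on $[0,1]$ with Lipschitz constant at most $2/k \le 2$. Second, I would establish uniform exponential decay of $\sup|g'_\alpha|$ in $|\alpha|$. For any two-letter word $\alpha=(\alpha_1,\alpha_2)$, a direct calculation of $g_\alpha(x)=(\alpha_2+x)/(\alpha_1\alpha_2+1+\alpha_1 x)$ yields
\[
|g'_\alpha(x)| = \frac{1}{(\alpha_1\alpha_2+1+\alpha_1 x)^2} \le \frac{1}{4}
\]
for all $x\in[0,1]$. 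Iterating with the chain rule, there exist $C>0$ and $\rho\in(0,1)$ (for example $\rho=1/\sqrt 2$) such that $\sup_{[0,1]}|g'_\alpha| \le C\rho^{|\alpha|}$ for every finite word $\alpha\in\N^*$.

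Third, for $\omega=(\omega_1,\ldots,\omega_k)$ and $x\in[0,1]$, write
\[
\log|g'_\omega(x)| = \sum_{j=1}^{k} \log|g'_{\omega_j}(y_j(x))|, \qquad y_j(x):=g_{\omega_{j+1}}\circ\cdots\circ g_{\omega_k}(x),
\]
with the convention $y_k(x)=x$. Subtracting the analogous expression for $w$ and applying the Lipschitz bound from step one together with the mean value theorem and the contraction estimate from step two, I obtain
\[
\bigl|\log|g'_\omega(z)|-\log|g'_\omega(w)|\bigr|
\le \sum_{j=1}^{k} 2\,|y_j(z)-y_j(w)|
\le 2C|z-w|\sum_{j=1}^{k}\rho^{k-j}
\le \frac{2C}{1-\rho}|z-w|.
\]
Denoting this last constant by $M$, we get $|g'_\omega(z)|/|g'_\omega(w)| \le e^{M|z-w|}\le e^{M}$, which immediately yields \eqref{120260310} (with $c:=e^{M}-1$). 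Finally, writing
\[
|g'_\omega(z)-g'_\omega(w)| = |g'_\omega(z)|\cdot\Bigl|1-\tfrac{g'_\omega(w)}{g'_\omega(z)}\Bigr|
\]
and using $|1-e^{\pm M|z-w|}|\le M|z-w|e^{M}$ (valid because all $g_k$ are monotone, so $g'_\omega$ has constant sign), one concludes \eqref{eq:dist1}, after possibly enlarging $c$.

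The only non-routine point is step two, the uniform geometric decay of $\sup|g'_\alpha|$; everything else is pure bookkeeping. This decay is classical for the Gauss IFS, and the two-step explicit computation above makes it elementary, so no real obstacle is anticipated. The output is a single constant $c$ depending only on the fixed contraction ratio $\rho$ and the Lipschitz constant of $\log|g'_k|$, both independent of $\omega$ and of its length.
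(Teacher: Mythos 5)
Your proof is correct, but it takes a genuinely different route from the paper. The paper deduces \eqref{eq:dist1} in essentially one line from the classical Koebe distortion theorem, applied to the univalent holomorphic extensions $g_\omega:\mathbb D_{[-\xi,1+\xi]}\to\mathbb D_{[-\xi,1+\xi]}$ already provided by Lemma~\ref{lem: conformal_extension}; the constant $c$ there depends only on the margin $\xi$. You instead run a real-variable bounded-distortion argument: the chain-rule telescoping of $\log|g'_\omega|$, the uniform Lipschitz bound $2/k\le 2$ for $\log|g'_k|$, and the uniform geometric decay $\sup|g'_\alpha|\le C\rho^{|\alpha|}$ obtained from the two-letter computation $|g'_{(\alpha_1,\alpha_2)}|\le 1/4$ (correctly circumventing the fact that the single branch $g_1$ is not a strict contraction at $0$, since $|g_1'(0)|=1$). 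All your steps check out, including the final passage from the additive bound on $\log|g'_\omega|$ to \eqref{eq:dist1} via $|1-e^{\pm s}|\le se^{s}$ and the constancy of the sign of $g'_\omega$; one should just take the final constant as $c=Me^{M}$ (which dominates $e^{M}-1$), so a single $c$ serves both \eqref{eq:dist1} and \eqref{120260310}. The trade-off: the paper's argument is shorter because the complex-analytic machinery is already in place, while yours is elementary and self-contained, avoids holomorphic extensions entirely, and would survive in settings where the branches are only $C^{1+\mathrm{Lip}}$ rather than real-analytic.
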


\begin{proof}
Each map $g_k(x)=\frac{1}{x+k}$, $k\in \N$ treated as a function of a complex variable $x$, is a M\"obius transformation with pole at $-k$. Let us recall that (see Lemma~\ref{lem: conformal_extension})  there exists $\xi\in (0,+\infty)$ such that 
$$
g_k\left(\mathbb D\left(\frac 1 2, \frac 1 2+\xi\right)\right)\subseteq \mathbb D\left(\frac 1 2, \frac 1 2+\xi\right)
$$
and the function
$$
g_k:\mathbb D\left(\frac 1 2, \frac 1 2+\xi\right)\longrightarrow \mathbb D\left(\frac 1 2, \frac 1 2+\xi\right)
$$
obviously is holomorphic and univalent. So, 
$$
g_\omega\left(\mathbb D\left(\frac 1 2, \frac 1 2+\xi\right)\right)\subseteq \mathbb D\left(\frac 1 2, \frac 1 2+\xi\right)
$$
and the function
$$
g_\omega:\mathbb D\left(\frac 1 2, \frac 1 2+\xi\right)\longrightarrow \mathbb D\left(\frac 1 2, \frac 1 2+\xi\right)
$$
is holomorphic and univalent for every $\omega\in \mathbb N^*$.

Then the classical Koebe's distortion theorem (see e.g. \cite{pom}) guarantees  that, in particular for all $w,z\in [0,1]$,
\begin{equation}\label{eq:distortion_est}
\left |\frac{g'_\omega(w)}{g'_\omega(z)}-1\right |\le c |z-w|
\end{equation}
where the constant $c$ depends only on the size of the ''margin'' $\xi$. This ends the proof of Lemma~\ref{lem:dist1}.
\end{proof}

\begin{lemma}
\label{prop:gomega_bounds}
There exists a constant $D\in (0,+\infty)$ such that
\begin{equation}
\| |g_\omega'|^h\|_{\BV}\le D \inf(|g'_\omega|^h)
\end{equation}
for every $h\in(1/2,1]$ and for every $\omega\in\mathbb N^*$.
\end{lemma}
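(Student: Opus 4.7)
The plan is to bound the two contributions to $\||g_\omega'|^h\|_{\BV}$, namely the boundary value $|g_\omega'(0^+)|^h$ and the total variation $V_{(0,1)}(|g_\omega'|^h)$, separately, each by a uniform multiple of $\inf|g_\omega'|^h$. The boundary term is immediate from \eqref{120260310}: since $|g_\omega'(0)|\le \sup|g_\omega'|\le (1+c)\inf|g_\omega'|$, one obtains $|g_\omega'(0^+)|^h\le (1+c)^h\inf|g_\omega'|^h\le (1+c)\inf|g_\omega'|^h$ for $h\in(0,1]$.

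For the total variation, observe that $|g_\omega'|^h$ is $C^1$ on $[0,1]$, so
\[
V_{(0,1)}\bigl(|g_\omega'|^h\bigr)
= h\int_0^1|g_\omega'(x)|^{h-1}\,|g_\omega''(x)|\,dx.
\]
The key ingredient, to be established uniformly in $\omega\in\mathbb N^*$ and $x\in[0,1]$, is a Koebe-type bound
\[
\left|\frac{g_\omega''(x)}{g_\omega'(x)}\right|\le c',
\]
where $c'$ depends only on the margin $\xi$ produced by Lemma~\ref{lem: conformal_extension}. Given this, $|g_\omega''(x)|\le c'|g_\omega'(x)|$ and therefore
\[
V_{(0,1)}\bigl(|g_\omega'|^h\bigr)
\le c'h\int_0^1|g_\omega'(x)|^h\,dx
\le c'h\sup|g_\omega'|^h
\le c'(1+c)\inf|g_\omega'|^h,
\]
using \eqref{120260310} once more together with $h\le 1$. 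Combining both estimates yields the lemma with $D:=(1+c)(1+c')$.

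The only genuine work is the uniform second-derivative bound, which is the main (but standard) obstacle. I would derive it from the classical Koebe inequality for univalent holomorphic functions: if $f$ is univalent on a disk $\mathbb D(z_0,R)$, then $|f''(z_0)/f'(z_0)|\le 4/R$. By Lemma~\ref{lem: conformal_extension}, every $g_\omega$ is a univalent holomorphic self-map of $\mathbb D\bigl(\tfrac12,\tfrac12+\xi\bigr)$, and for each $x\in[0,1]$ the disk $\mathbb D(x,\xi)$ is contained in $\mathbb D\bigl(\tfrac12,\tfrac12+\xi\bigr)$. Applying Koebe's estimate at each such $x$ with $R=\xi$ gives $|g_\omega''(x)/g_\omega'(x)|\le 4/\xi=:c'$, uniformly in $\omega$ and in $x\in[0,1]$. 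Once this is in place the remainder of the argument is a routine computation, and the restriction $h>1/2$ plays no role here (it is presumably needed elsewhere, e.g.\ to ensure spectral properties of the associated transfer operators).
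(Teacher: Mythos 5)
Your proof is correct and follows essentially the same route as the paper: both rest on the uniform conformal extension of Lemma~\ref{lem: conformal_extension} plus Koebe's theorem, and both reduce the claim to a Lipschitz-type bound on $|g_\omega'|^h$ with constant comparable to $\inf(|g_\omega'|^h)$. The only differences are cosmetic: the paper bounds the finite differences $\big||g_\omega'(z)|^h-|g_\omega'(w)|^h\big|$ directly via Lemma~\ref{lem:dist1} (which is the integrated form of your pointwise bound $|g_\omega''/g_\omega'|\le c'$), whereas you differentiate and integrate; and you explicitly handle the boundary term $|f(0^+)|$ in the $\BV$ norm, which the paper leaves implicit.
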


\begin{proof}
Of course, it is enough to bound the total variation of the function $|(g_\omega)'|^h$. It follows from Lemma~\ref{lem:dist1} that
\begin{equation}\label{eq:distortion}
\frac{|g'_\omega(z)|}{|g'_\omega(w)|}\le 1+c|z-w|
\end{equation}
and, consequently, 
$$
\frac{|g'_\omega(z)|^h}{|g'_\omega(w)|^h}\le (1+c|z-w|)^h<1+c|z-w|$$
where the last inequality is due to the fact that $h<1$. Thus, using \eqref{eq:distortion} again, we get
$$
\begin{aligned}
\big | |g'_\omega(z)|^h-|g'_\omega(w)|^h\big |
&\le c|z-w|\cdot |g'_\omega(z)|^h \\
&\le c(1+c)\inf(|g'_\omega|^h)|z-w|.
\end{aligned}
$$
Therefore, the function $[0,1]\ni u \longmapsto |g_\omega'(u)|^h$ is Lipschitz continuous with the constant $\inf(|g'_\omega|^h)$. Hence, the total variation norm is bounded  by $D\inf(|g'_\omega|^h)$ with the constant $D:=c(c+1)$, thus independent of $\omega\in\mathbb N^*$.
\end{proof}
\section{Upper Estimates for Asymptotics of  Hausdorff measure}
Using \eqref{220260226} and \eqref{320260226}, it follows from the formula \eqref{eq:limitformula2B} in Corollary~\ref{cor:ratio} that 
\beq
\label{220260312}
\limsup_{n\to\infty} \frac{1-H_n}{(1-h_n)\ln n}
=\limsup_{n\to\infty}
\left (\lim_{r\to 0}\frac{\sup\left\{\frac { m_{h_n,n}(F)}{\diam^{h_n}(F)}\right\}-1}{(1-h_n)\ln n}\right ),
\eeq
where supremum is taken over all closed intervals $F\subseteq [0,1]$ containing $x$ with diameter less than or equal to $r$, and $m_{h_n,n}$ is, as usually in this part of the paper, the $h_n$-conformal measure for the Iterated Function System generated by contractions $g_1,\dots, g_n$; it coincides with the normalized Hausdorff measure $H^{1}_{h_n}$. 

We shall estimate the above supremum in several steps.

\subsection{Step 1: Upper estimates for the sets $g_\omega([0,1])$ and $g_\omega([b_{l+1}, b_k])$}

\begin{lemma}
\label{prop:passing1}
There exists a constant $C_1\in (0,+\infty)$ such that
$$
\frac{m_{h_n,n}\big(g_\omega([0,1])\big)}{\big|g_\omega([0,1])\big|^{h_n}}-1\le C_1n^{-1}
$$
for all $n\in \N$ and every $\omega\in \mathbb N^*$.
\end{lemma}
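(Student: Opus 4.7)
\emph{Step 1: Reduction and reformulation.} First I would note that the only substantive case is $\omega\in\{1,\dots,n\}^*$. Indeed, if some entry $\omega_i$ exceeds $n$, then every point in $g_\omega([0,1])$ has its $i$-th continued-fraction digit equal to $\omega_i>n$, so $g_\omega([0,1])$ is disjoint from $J_n$; hence $m_{h_n,n}(g_\omega([0,1]))=0$ and the left-hand side of the claim equals $-1$, which is trivially $\le C_1 n^{-1}$. In the remaining case, iterating the conformality relation $m_{h_n,n}(g_j(A))=\int_A |g_j'|^{h_n}\,dm_{h_n,n}$ along $\omega$ and using $m_{h_n,n}([0,1])=1$ yields
\[
m_{h_n,n}\bigl(g_\omega([0,1])\bigr)=\int_0^1 |g_\omega'(x)|^{h_n}\,dm_{h_n,n}(x).
\]

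\emph{Step 2: Identify $m_{1,\infty}$ and transfer to Lebesgue.} The change of variables $y=1/(x+k)$ gives $\int_0^1 \mathcal L_{1,\infty}(f)(x)\,dx=\int_0^1 f(y)\,dy$ for every $f\in C([0,1])$, so Lebesgue measure is an eigenmeasure of $\mathcal L_{1,\infty}^*$ with eigenvalue $1$; by Theorem~\ref{t220260219} the corresponding eigenspace is one-dimensional, and therefore $m_{1,\infty}$ coincides with Lebesgue measure on $[0,1]$. Applying Proposition~\ref{prop:conformal_measures_close} with $t=h_n$ and $g=|g_\omega'|^{h_n}\in\BV_N$, and invoking $|h_n-1|=O(n^{-1})$ from \eqref{eq:asymp_dim for nonlinear Gauss}, I obtain
\[
m_{h_n,n}\bigl(|g_\omega'|^{h_n}\bigr)\le \int_0^1 |g_\omega'(x)|^{h_n}\,dx + C'' n^{-1}\,\bigl\||g_\omega'|^{h_n}\bigr\|_{\BV}.
\]

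\emph{Step 3: Jensen and uniform BV control.} Jensen's inequality for the concave function $u\mapsto u^{h_n}$ against Lebesgue probability measure on $[0,1]$ gives
\[
\int_0^1 |g_\omega'(x)|^{h_n}\,dx\le \left(\int_0^1 |g_\omega'(x)|\,dx\right)^{h_n}=\bigl|g_\omega([0,1])\bigr|^{h_n},
\]
while Lemma~\ref{prop:gomega_bounds} yields $\bigl\||g_\omega'|^{h_n}\bigr\|_{\BV}\le D\inf(|g_\omega'|^{h_n})\le D\,|g_\omega([0,1])|^{h_n}$, the last inequality because $|g_\omega([0,1])|=\int_0^1|g_\omega'|\,dx\ge \inf|g_\omega'|$. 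Combining the three estimates produces
\[
m_{h_n,n}\bigl(g_\omega([0,1])\bigr)\le \bigl|g_\omega([0,1])\bigr|^{h_n}\bigl(1+C_1 n^{-1}\bigr)
\]
with $C_1:=C''D$, which is the claim. For the finitely many small $n$ not covered by Proposition~\ref{prop:conformal_measures_close}, the uniform distortion bound \eqref{120260310} gives a constant upper bound, and $C_1$ may be enlarged to absorb these.

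\emph{Main obstacle.} The delicate point is producing an $O(n^{-1})$ bound \emph{uniform in $\omega$}. Two ingredients must cooperate: the spectral perturbation bound $\|\mathcal L_{h_n,n}-\mathcal L_{1,\infty}\|_{\BV}=O(n^{-1})$ from Lemma~\ref{prop: close_operators} (which is what Proposition~\ref{prop:conformal_measures_close} ultimately rests on), and the Koebe-type estimate of Lemma~\ref{prop:gomega_bounds} bounding $\|\,|g_\omega'|^{h_n}\|_{\BV}$ by the infimum of $|g_\omega'|^{h_n}$, with a constant independent of both $\omega$ and $n$. Once these are in place, Jensen's inequality and the fact that $m_{1,\infty}=\mathrm{Leb}$ close the argument with no further effort.
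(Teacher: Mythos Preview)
Your proof is correct and follows essentially the same route as the paper: express $m_{h_n,n}(g_\omega([0,1]))=\int |g_\omega'|^{h_n}\,dm_{h_n,n}$, compare to $\int |g_\omega'|^{h_n}\,dm_{1,\infty}$ via Proposition~\ref{prop:conformal_measures_close}, use Jensen to pass to $\bigl(\int|g_\omega'|\,dm_{1,\infty}\bigr)^{h_n}=|g_\omega([0,1])|^{h_n}$, and control the $\BV$ norm by Lemma~\ref{prop:gomega_bounds}. Your write-up is in fact a bit more careful than the paper's, since you explicitly handle the reduction to $\omega\in\{1,\dots,n\}^*$, justify $m_{1,\infty}=\mathrm{Leb}$, and dispose of the finitely many small $n$ outside the range of Proposition~\ref{prop:conformal_measures_close}.
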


\begin{proof}
We have 
$$
\begin{aligned} 
m_{h_n, n}({g_\omega([0,1])})&-|{g_\omega([0,1])}|^{h_n}=
\\
&=m_{h_n, n}({g_\omega([0,1])})-m_{1,\infty}^{h_n}({g_\omega([0,1])})= 
\\
&=\int_0^1 |(g'_{\omega})|^{h_n}dm_{h_n,n}-\left(\int_0^1|g'_{\omega}|dm_{1,\infty}\right )^{h_n}
\\
&=\int_0^1 |(g'_{\omega})|^{h_n}dm_{h_n,n}
-\int_0^1|(g'_{\omega})|^{h_n}dm_{1,\infty} +
\\
&\hspace{3cm}+\int_0^1|(g'_{\omega})|^{h_n}dm_{1,\infty}
-\left(\int_0^1|g'_{\omega}|dm_{1,\infty}\right )^{h_n}.
\end{aligned}
$$
Since the function $[0,1]\ni x\longmapsto x^{h_n}\in [0,1]$ is concave, it follows from the Jensen's inequality that the second difference in the above formula is $\le 0$, yielding the inequality
$$ 
m_{h_n, n}({g_\omega([0,1])})-|g_\omega([0,1])|^{h_n}
\le \int_0^1 |(g'_{\omega})|^{h_n}dm_{h_n,n}
-\int_0^1|(g'_{\omega})|^{h_n}dm_{1,\infty}.
$$
Applying now Proposition~\ref{prop:conformal_measures_close} and Lemma~\ref{prop:gomega_bounds}, we get that
$$ 
\bal
m_{h_n, n}({g_\omega([0,1])})-|{g_\omega([0,1])}|^{h_n}
&\le C_2\inf\big(|g'_{\omega}|^{h_n}\big)\big(n^{-1}+(1-h_n)\big)
\\
&\le C_1n^{-1}|{g_\omega([0,1])}|^{h_n}
\eal
$$
with some constants $C_2, C_1\in (0,+\infty)$ independent of $n$ and $\omega$.
The proof of Lemma~\ref{prop:passing1} is complete.
\end{proof}

Since $g_\omega(\Delta_j)=g_{\omega j}([0,1])$, as an immediate consequence of this lemma we get the following.

\begin{cor}
\label{cor:gomegaIj}
There holds, with the constant $C_1$ coming from Lemma~\ref{prop:passing1}
\begin{equation}\label{eq:Ij}
\frac{m_{h_n,n}\big(g_\omega(\Delta_j)\big)}{\big|g_\omega(\Delta_j)\big|^{h_n}}-1\le C_1n^{-1}
\end{equation}
for all $n\in \N$, all $j\in\mathbb N$,  and every $\omega\in \mathbb N^*$.
\end{cor}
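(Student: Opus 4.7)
The plan is to recognize that this corollary is a direct specialization of Lemma~\ref{prop:passing1}, obtained by applying that lemma not to $\omega$ itself but to the concatenation $\omega j \in \mathbb N^*$. First, I would recall from the definitions of $\Delta_j$ and of the inverse branches that
$$
\Delta_j = [b_{j+1}, b_j] = g_j([0,1]),
$$
so that for any finite word $\omega \in \mathbb N^*$ and any $j \in \N$,
$$
g_\omega(\Delta_j) = g_\omega\bigl(g_j([0,1])\bigr) = g_{\omega j}([0,1]),
$$
where $\omega j \in \mathbb N^*$ is the concatenation.

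Then I would simply apply Lemma~\ref{prop:passing1} with the word $\omega j$ in place of $\omega$. Since Lemma~\ref{prop:passing1} is stated uniformly for every element of $\mathbb N^*$, this substitution is legitimate and yields
$$
\frac{m_{h_n,n}\bigl(g_{\omega j}([0,1])\bigr)}{\bigl|g_{\omega j}([0,1])\bigr|^{h_n}}-1 \le C_1 n^{-1},
$$
with the same constant $C_1$. Via the identification above, this is exactly \eqref{eq:Ij}. There is no genuine obstacle here; the only verification needed is the identity $g_\omega(\Delta_j) = g_{\omega j}([0,1])$, which is immediate from the notational conventions introduced right after Definition~\ref{def: generacje fnl}. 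In particular, no new distortion estimate or measure comparison is required beyond what was already done in the proof of Lemma~\ref{prop:passing1}.
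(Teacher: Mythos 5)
Your proposal is correct and is exactly the paper's argument: the paper derives the corollary from Lemma~\ref{prop:passing1} via the same identification $g_\omega(\Delta_j)=g_{\omega j}([0,1])$, noting that the lemma holds uniformly over all words in $\mathbb N^*$. Nothing further is needed.
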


\begin{lemma} 
\label{prop:copies}
We have 
\begin{equation}
\frac{m_{h_n,n}\big(g_\omega([b_{l+1},b_k])\big)}{\big|g_\omega([b_{l+1},b_k])\big|^{h_n}}-1
\le (1-h_n)\ln n+O(n^{-1}).
\end{equation}
for all integers $1\le k\le l\le n$ and every $\omega\in \mathbb N^*$.
\end{lemma}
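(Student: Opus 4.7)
The plan is to lift the linear-case argument of Lemma~\ref{prop: 21} through the conformal map $g_\omega$. The distortion of $g_\omega$ will be absorbed automatically into probability weights defined by ratios of interval lengths, so I do not need any additional Koebe comparison at this stage; Corollary~\ref{cor:gomegaIj} will let me pass from the measure of $g_\omega(\Delta_j)$ to its length raised to the $h_n$-power with a uniform $1+O(n^{-1})$ error, and Lemma~\ref{l120260303} will supply the sharp comparison $\sum v_j^{h_n}-1\le (1-h_n)\ln n+\ldots$ for the resulting weights $v_j$.

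First, I would use the disjoint decomposition $[b_{l+1},b_k]=\bigcup_{j=k}^{l}\Delta_j$ together with the (finite) additivity of $m_{h_n,n}$ on images under $g_\omega$ to write
\[
m_{h_n,n}\big(g_\omega([b_{l+1},b_k])\big)=\sum_{j=k}^{l}m_{h_n,n}\big(g_\omega(\Delta_j)\big),
\]
and then invoke Corollary~\ref{cor:gomegaIj} termwise to obtain
\[
m_{h_n,n}\big(g_\omega([b_{l+1},b_k])\big)\le (1+C_1n^{-1})\sum_{j=k}^{l}|g_\omega(\Delta_j)|^{h_n}.
\]
Second, I would introduce the weights
\[
v_j:=\frac{|g_\omega(\Delta_j)|}{|g_\omega([b_{l+1},b_k])|},\qquad j=k,\ldots,l,
\]
so that $v_j\in[0,1]$, $\sum_{j=k}^{l}v_j=1$, and the number of summands satisfies $l-k+1\le n$. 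Since \eqref{eq:asymp_dim for nonlinear Gauss} gives $\limsup_{n\to\infty}n(1-h_n)<+\infty$, the sequence $t_n=h_n$ meets the hypothesis \eqref{120260210-03032026} of Lemma~\ref{l120260303}, so that lemma applied with $u_j=v_j$ yields
\[
\sum_{j=k}^{l}v_j^{h_n}-1\le (1-h_n)\ln n+O\big((1-h_n)^2\ln^2 n\big)=(1-h_n)\ln n+O(n^{-2}\ln^2 n).
\]

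Finally, dividing the first displayed inequality by $|g_\omega([b_{l+1},b_k])|^{h_n}$ and combining with the second, I would get
\[
\frac{m_{h_n,n}\big(g_\omega([b_{l+1},b_k])\big)}{|g_\omega([b_{l+1},b_k])|^{h_n}}\le (1+C_1n^{-1})\Big(1+(1-h_n)\ln n+O(n^{-2}\ln^2 n)\Big),
\]
and a direct expansion, absorbing the cross term $(1-h_n)\ln n\cdot O(n^{-1})=O(n^{-2}\ln n)$ and the pure error $O(n^{-2}\ln^2 n)$ into $O(n^{-1})$, delivers the claimed bound. I do not expect a genuine obstacle: all of the hard analytic content (distortion, Koebe, perturbation theory for the transfer operators, sharp comparison of $m_{h_n,n}$ with $m_{1,\infty}$) is already packaged in Corollary~\ref{cor:gomegaIj}, and Lemma~\ref{l120260303} reduces the geometric step to an elementary sum-of-powers estimate which is uniform in $\omega\in\mathbb N^*$. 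The only mild point to keep track of is that the $v_j$ are defined intrinsically via lengths of images of $g_\omega$, so no additional Koebe factor is needed to pass between $|g_\omega(\Delta_j)|$ and $|g_\omega([b_{l+1},b_k])|\cdot v_j$.
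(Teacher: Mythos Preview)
Your proposal is correct and follows essentially the same route as the paper: decompose $[b_{l+1},b_k]$ into the $\Delta_j$'s, apply Corollary~\ref{cor:gomegaIj} termwise to get the factor $(1+C_1n^{-1})$, introduce the length-ratio weights $v_j=|g_\omega(\Delta_j)|/|g_\omega([b_{l+1},b_k])|$, and finish with Lemma~\ref{l120260303}. The error bookkeeping you describe matches the paper's.
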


\begin{proof}
Since
$$
[b_{l+1},b_k]=\bigcup_{j=k}^l {\Delta_j},
$$
summing up the estimates \eqref{eq:Ij} in Corollary~\ref{cor:gomegaIj} over all $j=k,\dots,l$, looking up at \eqref{eq:asymp_dim for nonlinear Gauss}, \eqref{420260127}, and applying Lemma~\ref{l120260303} with
$$
u_j:=\frac{\big|g_\omega(\Delta_j)\big|}{\big|g_\omega([b_{l+1},b_k])\big|},
$$ 
we obtain
\begin{equation}\label{eq:summing}
\bal
m_{h_n,n}&\big(g_\omega([b_{l+1},b_k])\big)\le 
\\
&\le (1+C_1n^{-1})\sum_{j=k}^l\big|g_\omega(\Delta_j)\big|^{h_n} 
\\
&= (1+C_1n^{-1})\frac{\sum_{j=k}^l \big|g_\omega(\Delta_j)\big|^{h_n}}{\left(\sum_{j=k}^l \big|g_\omega(\Delta_j)\big|\right)^{h_n}} |g_\omega([b_{l+1},b_k])|^{h_n}
\\
&=(1+C_1n^{-1})|g_\omega([b_{l+1},b_k])|^{h_n}\sum_{j=k}^l u_j^{h_n}
\\
&=(1+C_1n^{-1})\Big(1+(1-h_n)\ln n+O\big(((1-h_n)\ln n)^2\Big )|g_\omega([b_{l+1},b_k])|^{h_n}
\\
&=\big(1+(1-h_n)\ln n+O(n^{-1})\big) |g_\omega([b_{l+1},b_k])|^{h_n}.
\eal 
\end{equation}
The proof of Lemma~\ref{prop:copies} is complete.
\end{proof}

Noting that Lemma~\ref{prop:copies} also holds for $l>n$, as its immediate consequence we get the following.

\begin{cor}\label{cor:0bk}
We have 
\begin{equation}
\frac{m_{h_n,n}\big(g_\omega([0,b_k])\big)}{\big|g_\omega([0,b_k])\big|^{h_n}}-1
\le (1-h_n)\ln n+O(n^{-1}).
\end{equation}
for all integers $k, n\ge 1$ and every $\omega\in \mathbb N^*$.
\end{cor}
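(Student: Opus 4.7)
My plan is to derive Corollary~\ref{cor:0bk} from Lemma~\ref{prop:copies} in two moves: first extend the lemma to arbitrary $l > n$, then pass to the limit $l \to \infty$. The text's parenthetical hint (``Lemma~\ref{prop:copies} also holds for $l > n$'') signals exactly this route, and the key point is that the $h_n$-conformal measure $m_{h_n,n}$ sees nothing beyond the truncated system.

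The extension to $l > n$ rests on the following support observation: for $j \geq n+1$, the interval $\Delta_j = [b_{j+1}, b_j]$ lies in $[0, b_{n+1}]$, and $J_n \subseteq [b_{n+1}, 1]$, so $\Delta_j$ meets $J_n$ in at most the single point $b_{n+1}$. Since $m_{h_n,n}$ is supported on the Cantor set $J_n$ and charges no point, $m_{h_n,n}(\Delta_j) = 0$ for all $j > n$. Invoking conformality (or, for $\omega$ containing letters above $n$, the fact that both sides simply vanish) gives $m_{h_n,n}(g_\omega(\Delta_j)) = 0$ for every $j > n$. Consequently, for any $l \geq n$,
\[
m_{h_n,n}\big(g_\omega([b_{l+1}, b_k])\big) \;=\; \sum_{j=k}^l m_{h_n,n}\big(g_\omega(\Delta_j)\big) \;=\; m_{h_n,n}\big(g_\omega([b_{n+1}, b_k])\big),
\]
while $|g_\omega([b_{l+1}, b_k])| \geq |g_\omega([b_{n+1}, b_k])|$. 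Thus the ratio in the claim with any $l \geq n$ is dominated by the same ratio at $l = n$, to which Lemma~\ref{prop:copies} applies and yields the bound $1 + (1-h_n)\ln n + O(n^{-1})$.

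Next, I let $l \to \infty$: $b_{l+1} \searrow 0$, so $[b_{l+1}, b_k] \nearrow (0, b_k]$. The measure of $g_\omega([0, b_k])$ coincides with that of $g_\omega((0, b_k])$ (since $\{0\}$ is a single point and $m_{h_n,n}$ is atomless), and the diameters agree as well, so monotone convergence of measure and continuity of the diameter deliver
\[
\frac{m_{h_n,n}\big(g_\omega([0, b_k])\big)}{|g_\omega([0, b_k])|^{h_n}} - 1 \;\leq\; (1-h_n)\ln n + O(n^{-1}).
\]

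For the boundary case $k \geq n+1$, the interval $[0, b_k] \subseteq [0, b_{n+1}]$ is disjoint from $J_n$ except possibly at $b_{n+1}$, so $m_{h_n,n}(g_\omega([0, b_k])) = 0$ and the LHS equals $-1$, which is trivially below the claimed bound. No serious obstacle arises: all the genuine estimation was accomplished in Lemma~\ref{prop:copies}, and the present corollary is just its natural extension once one records that $m_{h_n,n}$ ignores the tail intervals $\Delta_j$ with $j > n$.
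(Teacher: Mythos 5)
Your proof is correct and follows essentially the same route as the paper, which simply remarks that Lemma~\ref{prop:copies} extends to $l>n$ and deduces the corollary as an immediate consequence. You have merely supplied the details the paper leaves implicit: that $m_{h_n,n}$ gives zero mass to $g_\omega(\Delta_j)$ for $j>n$ because $J_n\subseteq[b_{n+1},1]$ and the measure is atomless, so the bound for $[b_{n+1},b_k]$ dominates and passes to the limit $l\to\infty$.
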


\subsection{Step 2: Upper estimates for the intervals $g_\omega([0,r))$}

It is well-known and easy to see that there exists $\gamma\in (0,1)$ such that 
\begin{equation}
\label{320260303}
\big|g_\omega([0,1])\big|\le \gamma^{|\omega|}
\end{equation}
for all $\omega\in \mathbb N^*$.

In order to obtain upper estimates for an arbitrary interval $[0,r)$, we use the decomposition of $[0,r)$ obtained in the same way as in Lemma~\ref{lem: rozdzial przedzialu 0,r} but with the Gauss map instead of of its linear version. We thus represent the interval $[0,r)$ as a (infinite in general) union of intervals 
$$[0,r)=\bigcup_{i=1}^\infty I_i,$$
where each set $I_j$ is either degenerate (i.e. a single point)  or  it is an interval  of the  form $g_\omega([0, b_{k_j}])$ or $I_j=g_\omega([b_{k_j},1]$ and  the length of the word $\omega$, satisfies  $|\omega|=j-1$. The left endpoint of $I_j$ coincides with the right endpoint of $I_{j-1}$.
Recall also that in the construction at the $N$-th step we have 
$$
[0,r)\subset   I_1\cup I_2\cup\dots\cup I_N\cup I_{N+1}',
$$
where $I_{N+1}'$ is a cylinder sets of $N$-th generation adjacent to $I_N$.einyrt.lafar
\
\begin{lemma}
\label{prop:g_omega_r}
The following uniform estimate holds for every $r\in (0,1)$ and every $\omega\in \mathbb N^*$:
\begin{equation}\label{eq:0r}
m_{h_n}\big(g_\omega([0,r))\big) \le \left (1+(1-h_n)\ln n+O(n^{-1}\ln \ln n)\right ) |g_\omega([0,r))|^{h_n}.
\end{equation}
More precisely, there exists a constant $C_3\in (0,+\infty)$ such that for every $r\in (0,1)$, every $\omega\in \mathbb N^*$, and every $n\in\mathbb N$, we have 
$$ 
m_{h_n}\big(g_\omega([0,r))\big)\le \left (1+(1-h_n)\ln n+C_3n^{-1}\ln\ln n\right )\cdot |g_\omega([0,r))|^{h_n}. 
$$
\end{lemma}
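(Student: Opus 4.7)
The strategy closely parallels the proof of Lemma~\ref{prop: 0r ograniczenie z gory} in the piecewise linear case, with the bounded-distortion estimates Lemma~\ref{lem:dist1} and Lemma~\ref{lem:distortion gomega2} absorbing the nonlinearity of the Gauss branches. First I would apply the decomposition described just before the statement, writing
\[
[0,r)=\bigcup_{j=1}^\infty I_j,
\]
where each $I_j$ is either a single point or has the shape $g_{\alpha_j}([0,b_{k_j}])$ or $g_{\alpha_j}([b_{k_j},1])$ with $|\alpha_j|=j-1$. Pushing forward by $g_\omega$, each $g_\omega(I_j)=g_{\omega\alpha_j}([0,b_{k_j}])$ or $g_{\omega\alpha_j}([b_{k_j},1])$ falls under either Corollary~\ref{cor:0bk} or Lemma~\ref{prop:copies}, so uniformly in $j$,
\[
m_{h_n}\bigl(g_\omega(I_j)\bigr)\le \bigl(1+(1-h_n)\ln n+O(n^{-1})\bigr)\bigl|g_\omega(I_j)\bigr|^{h_n}.
\]
Summing and factoring gives
\[
\frac{m_{h_n}(g_\omega([0,r)))}{|g_\omega([0,r))|^{h_n}}\le \bigl(1+(1-h_n)\ln n+O(n^{-1})\bigr)R_n,
\]
where $R_n:=\sum_{j\ge 1}w_j^{h_n}$ and $w_j=|g_\omega(I_j)|/|g_\omega([0,r))|$ satisfies $\sum_j w_j=1$.

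The main obstacle is to bound $R_n$ uniformly close to $1$. I would first reprove the Gauss analogues of Lemmas~\ref{lem:even} and \ref{lem:odd}: since $I_j$ and its adjacent companion $I'_j$ arise as images under a common branch $g_{\alpha_{j-1}}$ of intervals with the same combinatorial configuration as in the linear case, the distortion bound \eqref{120260310} transfers the linear inequalities $|I_{j+1}|\le|I_j|$ (even $j$) and $|I_{j+2}|\le\tfrac12|I_j|$ (odd $j$) into $|I_{j+2}|\le c|I_j|$ with an absolute $c<1$ independent of $n$, $\omega$, and $j$. The concavity-plus-pairing argument of Lemma~\ref{lem:est_wn_2}, combined with Lemma~\ref{lem: dokladne wyznaczenie alpha}, then yields
\[
R_n\le 2^{1-h_n}\,\frac{(1-c)^{h_n}}{1-c^{h_n}},
\]
whose Taylor expansion near $h_n=1$ gives $R_n=1+O(1-h_n)=1+O(n^{-1})$ by virtue of \eqref{eq:asymp_dim for nonlinear Gauss}.

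Plugging this into the preceding estimate produces the asserted bound
\[
\frac{m_{h_n}(g_\omega([0,r)))}{|g_\omega([0,r))|^{h_n}}\le 1+(1-h_n)\ln n+O(n^{-1}\ln\ln n),
\]
where the mild $\ln\ln n$ factor (in place of the cleaner $O(n^{-1})$ one would hope for) accommodates the accumulated distortion errors when the decomposition is truncated at a depth $N\sim\ln\ln n$, beyond which the per-piece estimate \eqref{eq:Ij} of Corollary~\ref{cor:gomegaIj} is applied directly to each remaining cylinder of generation $N+1$. The single delicate point of the whole argument will be verifying that the constants $c$ and the implied constants in the Koebe-type estimates of Lemmas~\ref{lem:dist1}, \ref{lem:distortion gomega}, and \ref{lem:distortion gomega2} are genuinely uniform in $n$, $\omega$, and the depth $j$; once this is in place, all remaining steps are routine combinations of the already established per-piece bounds.
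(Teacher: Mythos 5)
Your proposal diverges from the paper at the one step that actually carries the difficulty, and that step has a genuine gap. The paper does \emph{not} sum the full infinite decomposition and bound $R_n=\sum_j w_j^{h_n}$ by a geometric--decay argument. Instead it truncates: it writes $[0,r)\subset I_1\cup\dots\cup I_N\cup I_{N+1}'$ with $N=\left[\frac{2}{-\ln\gamma}\ln(n+1)\right]$, covers the entire tail by the single adjacent generation-$N$ cylinder $I_{N+1}'$, and uses $|I_{N+1}'|\le\gamma^N\le (n+1)^{-2}$ together with $r>\frac{1}{n+1}$ and the distortion bound \eqref{120260310} to show that the overcount $\big(\sum_{j\le N}|g_\omega(I_j)|+|g_\omega(I_{N+1}')|\big)/|g_\omega([0,r))|$ is $1+O(n^{-1})$. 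The factor $\ln\ln n$ then appears as $\ln(N+1)$ when Lemma~\ref{l120260303} is applied to the $N+1$ summands. Your explanation of the $\ln\ln n$ is inconsistent with this: truncating at depth $N\sim\ln\ln n$, as you suggest at the end, leaves a tail cylinder of length $\gamma^{\ln\ln n}=(\ln n)^{-|\ln\gamma|}$, which is nowhere near $o(1/n)$ and cannot be absorbed relative to $|[0,r)|\ge\frac{1}{n+1}$; one genuinely needs $N\sim\ln n$.

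The more serious problem is your claimed control of the infinite sum $R_n$. You assert that bounded distortion transfers the linear-case inequalities of Lemmas~\ref{lem:even} and~\ref{lem:odd} into $|I_{j+2}|\le c\,|I_j|$ with an absolute $c<1$ for the Gauss system. Distortion does not preserve strict contraction of ratios: it multiplies the linear ratio $\tfrac14$ by the distortion constant of the relevant branch, and already a single Gauss branch has $\sup|g_k'|/\inf|g_k'|=\big(\tfrac{k+1}{k}\big)^2$, which equals $4$ for $k=1$; the constant $1+c$ of Lemma~\ref{lem:dist1} is therefore at least $4$, and the resulting bound on $|I_{j+2}|/|I_j|$ can exceed $1$. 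Moreover, the factors of $\tfrac12$ in Lemma~\ref{lem:odd} come from exact self-similarity of the linear model (the rightmost subcylinder occupying exactly half its parent), which simply fails for the nonlinear cylinders. One could plausibly repair this by comparing $I_j$ with $I_{j+2m}$ for a fixed large $m$ so that the model decay $4^{-m}$ beats the uniform Koebe constant, and then rerunning Lemma~\ref{lem: dokladne wyznaczenie alpha} on blocks of $2m$ terms; but as written the decay claim is unjustified, and it is precisely the point you set aside as ``delicate.'' Without it, $R_n$ is not bounded and the argument does not close. If the repair were carried out it would yield the stronger error term $O(n^{-1})$, which is a genuine (if harder-won) improvement over the paper's $O(n^{-1}\ln\ln n)$ --- but the paper's truncation device sidesteps the whole issue at the modest cost of that extra $\ln\ln n$.
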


\begin{proof}
We write 
$[0,r)\subset I_1\cup I_2\cup\dots \cup I_N\cup I_{N+1}'$, with some $N\le n$. The exact choice of $N$   will be determined later on.

It follows from Proposition~\ref{prop:copies} that
$$
{m_{h_n,n}\big(g_\omega(I_j)\big)} 
\le \big(1+(1-h_n)\ln n+O(n^{-1})\big)\cdot |g_\omega(I_j)|^{h_n}
$$
for every $j=1,\ldots, N$ and
$$
{m_{h_n,n}\big(g_\omega(I_{N+1}')\big)} \le \big(1+(1-h_n)\ln n+O(n^{-1})\big) |g_\omega(I_{N+1}')|^{h_n}
$$
Therefore, using also Lemma~\ref{l120260303} and \eqref{eq:asymp_dim for nonlinear Gauss}, we get
\begin{equation}
\label{eq:summingI}
\begin{aligned}
m_{h_n,n}&\Big(g_\omega\big(I_1\cup\dots\cup I_N\cup I'_{N+1}\big)\Big)\le
\\
&\le \big(1+(1-h_n)\ln n+O(n^{-1})\big)\sum_{j=1}^N |g_\omega(I_j)|^{h_n}+|g_\omega(I_{N+1}')|^{h_n}
\\
&=\big (1+(1-h_n)\ln n +O(n^{-1})\big)
\left (\sum_{j=1}^N |g_\omega(I_j)|+|g_\omega(I_{N+1}')|\right )^{h_n} \cdot 
\\
&\hspace{5.5cm}\cdot \frac{\sum_{j=1}^N |g_\omega(I_j)|^{h_n}+|g_\omega(I_{N+1}')|^{h_n}}{\left(\sum_{j=1}^N |g_\omega(I_j)|+|g_\omega(I_{N+1}')|\right)^{h_n}}
\\
&\le \left (1+(1-h_n)\ln n +O(n^{-1})\right )\cdot
\\
&\hspace{2cm}\cdot \Big(1+(1-h_n)\ln (N+1)+O\big((n^{-1}\ln (N+1))^2\big)\Big)\cdot
\\
&\hspace{6cm}\cdot \left (\sum_{j=1}^N |g_\omega(I_j)|+|g_\omega(I_{N+1}')|\right )^{h_n}
\end{aligned}
\end{equation}
To proceed further, we  need to estimate from above the product
\begin{equation}\label{eq:ratio}
\bal
|g_\omega([0,r))|^{-h_n}&\left(\sum_{j=1}^N |g_\omega(I_j)|+|g_\omega(I_{N+1}')|\right)^{h_n}=
\\
&=\left (|g_\omega([0,r))|^{-1}\left( \sum_{j=1}^N |g_\omega(I_j)|+|g_\omega(I_{N+1}')|\right )\right )^{h_n}
.
\eal
\end{equation}
We may assume that $r>\frac{1}{n+1}$ since otherwise the interval $[0,r)$ does not intersect the set $J_n$ and the estimate required in our lemma is obvious.
Since $\bigcup_{j=1}^N I_j\subset [0,r)$
and $|I'_{N+1}|$ is a cylinder set  of $N$-th  generation, thus, by \eqref{320260303}, of length not exceeding $\gamma^N$, and $r>\frac{1}{n+1}$, by virtue of \eqref{120260310} in Lemma~\ref{lem:dist1}, we get 
$$   
\bal
|g_\omega([0,r))|^{-1}&\left( \sum_{j=1}^N |g_\omega(I_j)|+|g_\omega(I_{N+1}')|\right )=
\\
&= \frac{\sum_{j=1}^N |g_\omega(I_j)|}{|g_\omega([0,r))|}
+\frac{|g_\omega(I_{N+1}')|}{|g_\omega([0,r))|}
\\
&\le 1+\frac{\sup(|g'_\omega|)}{\inf(|g'_\omega|)}\frac{|I_{N+1}'|}{|[0,r)|}
\le 1+(1+c){\gamma^N}(n+1).
\eal
$$
Taking $N:=\left[\frac{2}{-\ln\gamma}\ln (n+1)\right]+1$, we thus get 
$$   
|g_\omega([0,r))|^{-1}\left( \sum_{j=1}^N |g_\omega(I_j)|+|g_\omega(I_{N+1}')|\right )
\le 1+(1+c)n^{-1}
$$
and
$$(1+(1-h_n)\ln (N+1)+O(n^{-1}\ln (N+1))^2\le 1+ O((1-h_n)\ln\ln n).$$
Therefore, looking up at \eqref{eq:summingI} \eqref{eq:ratio}, we get
$$
\begin{aligned}
&m_{h_n,n}\big(g_\omega([0,r))\big)\le
\\
&\hspace{5mm}\le m_{h_n,n}\Big(g_\omega\big(I_1\cup\dots\cup I_N\cup I'_{N+1}\big)\Big)
\\
&\hspace{5mm}\le  \left (1+(1-h_n)\ln n+O(n^{-1}\right )\cdot \left (1+O((1-h_n)\ln\ln n)\cdot(1+(1+c)n^{-1}\right ) \cdot |g_\omega([0,r)]^{h_n}\\
&\hspace{5mm}\le \left (1+(1-h_n)\ln n+O(n^{-1}\ln\ln n)\right )|g_\omega([0,r))|^{h_n}.
\end{aligned}
$$
The proof of Lemma~\ref{prop:g_omega_r} is complete.
\end{proof}

\subsection{Step 3: Upper estimates for  the intervals $g_\omega((r,1])$}
    
\begin{lemma}\label{cor:copies-20260312}
The following uniform estimate holds for every $r\in (0,1)$ and every $\omega\in \mathbb N^*$:
\begin{equation}\label{eq:0r-2}
m_{h_n}\big(g_\omega((r,1])\big) \le \big (1+(1-h_n)\ln n+O(n^{-1}\ln \ln n)\big ) |g_\omega((r,1])|^{h_n}.
\end{equation}
More precisely, there exists a constant $C_4\in (0,+\infty)$ such that for every $r\in (0,1)$, every $\omega\in \mathbb N^*$, and every $n\in\mathbb N$, we have 
$$ 
m_{h_n}\big(g_\omega((r,1])\big)\le \left (1+(1-h_n)\ln n+C_4n^{-1}\ln\ln n\right )\cdot |g_\omega((r,1])|^{h_n}. 
$$
\end{lemma}

\begin{proof}
Given an interval $(r,1]$, let $k=k(r)\ge 1$ be the integer for which  $r\in (b_{k+1}, b_k]$. Then 
$$(r,1]=[b_k,1]\cup (r,b_k].$$
For the set $[b_k,1]$ we apply Lemma~\ref{prop:copies}, getting
$$
m_{h_n,n}\big(g_\omega([b_{k},1]\big)
\le(1+ \ln n (1-h_n)+O(n^{-1}))\cdot  {|g_\omega([b_{k},1])|^{h_n}}.
$$
Since $(r,b_k]=g_k([0,\hat r))$ with some $\hat r\in [0,1]$, using  Proposition~\ref{prop:g_omega_r}, we get
$$
m_{h_n,n}\left (g_\omega((r,b_k])\right ) \le \left (1+(1-h_n)\ln n+O(n^{-1}\ln \ln n)\right ) |g_\omega((r,b_k])|^{h_n}.
$$
Thus,
$$
\begin{aligned}
m_{h_n,n}&\big(g_\omega((r,1])\big)=
\\
&=m_{h_n,n}\big(g_\omega((r,b_k])\big)+m_{h_n,n}\big(g_\omega([b_{k},1])\big)
\\
&\le \left (1+(1-h_n)\ln n+O(n^{-1}\ln \ln n)\right )\cdot \big(|g_\omega((r,b_k])|^{h_n}+|g_\omega([b_{k},1])|^{h_n}\big)\\
&\le \left (1+(1-h_n)\ln n+O(n^{-1}\ln \ln n)\right )\cdot |g_\omega((r,1])|^{h_n}\cdot 2^{1-h_n}\\
&= \left (1+(1-h_n)\ln n+O(n^{-1}\ln \ln n)\right )\cdot |g_\omega((r,1])|^{h_n}.
\end{aligned}
$$
The proof of Lemma~\ref{cor:copies-20260312} is complete.
\end{proof}


\subsection{Upper estimates for an arbitrary interval}

In this section we obtain the estimates for an arbitrary interval contained in $[0,1]$.

\begin{prop}\label{prop: dowolny przedzialB}
If $F\subset [0,1]$ is an arbitrary closed interval, then 
$$
m_{h_n}(F) \le \left (1+(1-h_n)\ln n+O(n^{-1}\ln \ln n)\right ) |F|^{h_n},
$$
where the constant witnessing to $O(n^{-1}\ln\ln n)$ is independent of $F$ and $n$.

Thus, 
$$
\limsup \limits_{n\to\infty}
\frac{\sup\left\{\frac { m_{h_n,n}(F)}{\diam^{h_n}(F)}\right\}-1}{(1-h_n)\ln n}\le 1,
$$
where supremum is taken over all closed intervals $F\subseteq [0,1]$.
\end{prop}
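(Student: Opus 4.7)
The plan is to decompose an arbitrary closed interval $F\subset[0,1]$ into at most three ``standard'' pieces for which upper estimates are already available (Lemmas~\ref{prop:copies}, \ref{prop:g_omega_r}, and \ref{cor:copies-20260312}), and to recombine those estimates at the cost of only an $(1+O(n^{-1}))$ multiplicative overhead. The scheme mirrors the proof of Lemma~\ref{prop: dowolny przedzial} in the piecewise-linear setting, with the nonlinear Steps~1--3 playing the role of their linear counterparts. First I would reduce to $F\subset[b_{n+1},1]$: since $J_n\subset[b_{n+1},1]$, replacing $F$ by $F\cap[b_{n+1},1]$ leaves $m_{h_n,n}(F)$ unchanged while possibly shrinking $|F|$, so it suffices to bound the ratio on the truncated interval.

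I would then distinguish three structural cases, as in the linear argument. \textbf{Case A:} $F$ contains some complete basic interval $\Delta_j=[b_{j+1},b_j]$ with $j\le n$. Let $[b_{l+1},b_k]$, $1\le k\le l\le n$, be the maximal such union contained in $F$; then $F=I_1\cup[b_{l+1},b_k]\cup I_2$ with $I_1\subset\Delta_{l+1}$ and $I_2\subset\Delta_{k-1}$ (either possibly empty or a single point). Since $g_{l+1}$ and $g_{k-1}$ are orientation-reversing, one has $I_1=g_{l+1}([0,r_1))$ and $I_2=g_{k-1}((r_2,1])$ for suitable $r_1,r_2\in[0,1]$. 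Lemma~\ref{prop:g_omega_r} with $\omega=(l+1)$, Lemma~\ref{prop:copies} with empty $\omega$, and Lemma~\ref{cor:copies-20260312} with $\omega=(k-1)$ give uniformly
\[
m_{h_n,n}(J)\le\bigl(1+(1-h_n)\ln n+O(n^{-1}\ln\ln n)\bigr)|J|^{h_n}
\]
for each $J\in\{I_1,[b_{l+1},b_k],I_2\}$. The weights $u_J:=|J|/|F|$ sum to $1$ with at most three nonzero summands, so
\[
\sum_J|J|^{h_n}\le|F|^{h_n}\cdot 3^{1-h_n}=|F|^{h_n}\bigl(1+O(1-h_n)\bigr)=|F|^{h_n}\bigl(1+O(n^{-1})\bigr)
\]
by \eqref{eq:asymp_dim for nonlinear Gauss}; multiplying the two inequalities yields the desired bound for $F$. \textbf{Case B:} $F$ contains no full $\Delta_j$, $j\le n$, but its interior meets some $b_j$. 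Then $F=I_1\cup I_2$ with $I_1\subsetneq\Delta_j$ and $I_2\subsetneq\Delta_{j-1}$, and the identical two-piece argument produces the same bound.

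\textbf{Case C:} $F$ lies inside a single $\Delta_k$, $k\le n$, and its interior contains no $b_j$. I would iterate: set
\[
M:=\max\bigl\{m\ge 1\colon F\subset g_\omega([0,1])\text{ for some }\omega\in\{1,\ldots,n\}^m\bigr\},
\]
which is finite by \eqref{320260303} as long as $|F|>0$ (the case $|F|=0$ being trivial). Writing $F=g_\omega(F')$ with $|\omega|=M$, maximality forces $F'$ to contain some $b_j$ in its interior, so $F'$ falls into Case~A or Case~B. The decisive feature is that Lemmas~\ref{prop:copies}, \ref{prop:g_omega_r}, and \ref{cor:copies-20260312} are stated \emph{uniformly in} $\omega\in\mathbb N^*$; hence the bounds derived for the pieces of $F'$ transfer verbatim, with $g_\omega$ pre-composed, to the corresponding pieces of $F$. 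The limsup assertion then follows from the first one by dividing by $(1-h_n)\ln n$ and noting that $n^{-1}\ln\ln n/\bigl((1-h_n)\ln n\bigr)\to 0$ by \eqref{eq:asymp_dim for nonlinear Gauss}. The main technical point---and the reason Steps~1--3 were proved with a general precomposition $g_\omega$ rather than only for level-zero intervals---is precisely this uniformity; without it, Case~C could not be collapsed onto Cases~A and B in a single step.
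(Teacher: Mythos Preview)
Your proposal is correct and follows essentially the same approach as the paper's proof: the same reduction to $F\subset[b_{n+1},1]$, the same three-case decomposition (full basic interval contained / straddling a single $b_j$ / contained in a single cylinder), the same use of Lemmas~\ref{prop:copies}, \ref{prop:g_omega_r}, and \ref{cor:copies-20260312} on the pieces, and the same recombination via $\sum_J|J|^{h_n}\le 3^{1-h_n}|F|^{h_n}=(1+O(n^{-1}))|F|^{h_n}$. You also correctly identify the key structural point---that the three Step~1--3 lemmas are uniform in $\omega\in\mathbb N^*$, which is precisely what allows Case~C to be reduced to Cases~A/B after pulling back through $g_\omega$.
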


\begin{proof}
Let $F \subset [0,1]$ be an arbitrary closed interval. Fix $n \in \N$. We can assume that $F\subset [b_{n+1}, 1]$ since otherwise setting 
$$
\widetilde{F}:= F \cap [b_{n+1},1],
$$
we have $m_n(\widetilde{F})=m_n(F)$, while ${\rm{diam}}\widetilde{F} \leq {\rm{diam}} F$. Consequently,
\[
\frac{m_n(\widetilde{F})}{{\rm{diam}}^{h_n}(\widetilde{F})} \geq \frac{m_n(F)}{{\rm{diam}}^{h_n}(F)}.
\]
Now consider two cases:

\vspace{1mm}
Case 1. $F$ contains some interval of the form $[b_{j+1},b_{j}]$, $j \leq n $.
 
Case~2. $F$ does not contain any interval of the form $[b_{j+1},b_{j}]$, $j \leq n $.
      
\vspace{2mm}
First, we will focus on the Case~1. Let $[b_l, b_k]$, $k < l \leq n+1$, be the union of all  intervals of the form $[b_{j+1},b_{j}]$, $j \leq n $, that are contained in $F$. Then $F$ can be represented as a union of three intervals
\[
F = I_1 \cup [b_l, b_k] \cup I_2
\]
where $I_1\subset [b_{l+1}, b_l]$ and $I_2\subset [b_k, b_{k-1}]$.

For each of these three intervals we proved the upper estimates in previous steps: the estimate for $[b_{l+1},b_k]$ is provided in Lemma ~\ref{prop:copies}, while the interval $I_1$ is of the form $g_{l}([0,r])$ and $I_2$ is of the form $g_k([1,s])$.  

Therefore, applying Lemma~\ref{prop:g_omega_r}, Lemma ~\ref{prop:copies}, and Lemma~\ref{cor:copies-20260312}, we obtain 
$$
\begin{aligned}
m_{h_n,n}(F)
&=m_{h_n,n}(I_1\cup [b_{l+1},b_k]\cup I_2)
\\
&\le 
\left (1+(1-h_n)\ln n+O(n^{-1}\ln \ln n)\right ) (|I_1|^{h_n}+|[b_{l+1},b_k]|^{h_n}+|I_2|^{h_n}\\
&\le  \left (1+(1-h_n)\ln n+O(n^{-1}\ln \ln n)\right ) |F|^{h_n}3^{1-h_n}\\
&\le  \left (1+(1-h_n)\ln n+O(n^{-1}\ln \ln n)\right ) |F|^{h_n}.
\end{aligned}
$$
We are done with Case~1.

Now, we move on to Case~2. Then, again there are two subcases:

\vspace{1mm}
Case~2a. $F \subsetneqq [b_{k+1}, b_{k}]$ for some $k \leq n$. 

Case~2b. $F = I_1 \cup I_2 $ where $I_1 \subsetneqq [b_k, b_{k-1}]$ and $I_2 \subsetneqq [b_{k-1}, b_{k-2}]$ for some $k \leq n$.

\vspace{2mm}
In Case~2b the same way of estimate applies as in Case~1.  Indeed, then
\beq\label{120260312}
F=g_{l}([0,r))\cup g_k([1,s])
\eeq
with some $k\le n$, $l\le n+1$, and  $r, s\in [0,1]$. So, 
the only difference to Case~1 is that now there are two summands instead of three, which leads to the (even better) factor $2^{1-h_n}$ instead of $3^{1-h_n}$.

In Case~2a, let $\omega\in\mathbb N^*$ be the longest word such that
$$
F \subsetneqq g_\omega([0,1]).
$$
We notice that being in this case, we have $|\omega|\ge 1$. Put 
$$
F':=g_{\omega}^{-1}(F).
$$
We conclude that either $F'$ falls into Case~1 or into Case~2b. 

Consider first the situation when $F'$ falls into Case 1. Then   
$$
F'=I_1'\cup [b_{l+1}, b_k]\cup I_2'
$$ 
with some $1\le k<l\le n+1$, where $I_1'\subset [b_{l+1},b_{l}]$ and $I_2'\subset [b_{k}, b_{k-1}] $. Hence, 
$$
F=g_\omega(I'_1)\cup g_\omega([b_{l+1}, b_k])\cup g_\omega(I_2').
$$
Note that $I_1'=g_{l+1}([0,r))$ and $I_2'=g_{k-1}((\tilde r, 1]$ with some $0<r,\tilde r<1$. So, 
$$
F=g_{\omega(l+1)}([0,r))\cup g_\omega([b_{l+1},b_k]\cup g_{\omega(k-1)} ((\tilde r,1]).
$$
Thus, applying now Lemma~\ref{prop:g_omega_r}, Lemma ~\ref{prop:copies}, and Lemma~\ref{cor:copies-20260312} respectively to the the first, second, and the third of these intervals, we obtain
$$
\begin{aligned}
m_{h_n,n}(F)
&\le  \left (1+(1-h_n)\ln n+O(n^{-1}\ln \ln n)\right )  \cdot (|I_1|^{h_n}+[b_{l+1},b_k]^{h_n}+|I_2|^{h_n})\\
&\le \left (1+(1-h_n)\ln n+O(n^{-1}\ln \ln n)\right )\cdot 3^{1-h_n}\cdot |F|^{h_n}\\
&=  \left (1+(1-h_n)\ln n+O(n^{-1}\ln \ln n)\right )\cdot |F|^{h_n}.
\end{aligned}
$$
Finally, we deal with the situation when $F'$ falls into Case~2b. Then, because of \eqref{120260312}, we have that
$$
F'=g_{l}([0,r'))\cup g_k([1,s'])
$$
with some $k\le n$, $l\le n+1$, and  $r', s'\in [0,1]$. Hence 
$$
F=g_\omega(F')=g_{\omega l}([0,r'))\cup g_{\omega k}([1,s'])
$$
and we conclude this case by exactly the same arguments as in original Case~2b.
\end{proof}

Having this proposition, the proof of Theorem~\ref{thm:gauss_above} directly follows from \eqref{220260312}.

\bibliographystyle{plain}

\end{document}